\newenvironment{prooof}{{\bf Proof:}}{\hfill$\square$}
\spnewtheorem{Df}{Definition}[section]{\bfseries}{\upshape}
\spnewtheorem{Teo}[Df]{Theorem}{\bfseries}{\upshape}
\newtheorem*{TeoNoNr}{Theorem}   %unnumbered theorem for the introduction
\spnewtheorem{Prop}[Df]{Proposition}{\bfseries}{\upshape}
\spnewtheorem{Lem}[Df]{Lemma}{\bfseries}{\upshape}
\spnewtheorem{Obs}[Df]{Remark}{\bfseries}{\upshape}
\spnewtheorem{Fact}[Df]{Fact}{\bfseries}{\upshape}
\spnewtheorem{Fat}[Df]{Claim}{\bfseries}{\upshape}
\spnewtheorem{Que}[Df]{Question}{\bfseries}{\upshape}
\spnewtheorem{Cor}[Df]{Corollary}{\bfseries}{\upshape}
\spnewtheorem{Ex}[Df]{Example}{\bfseries}{\upshape}
\spnewtheorem{Notation}[Df]{Notation}{\bfseries}{\upshape}
\spnewtheorem{construction}[Df]{Construction}{\bfseries}{\upshape}
\newcommand{\N}{\mathbb{N}}
\def\k{\kappa}
\newcommand{\Fm}[2]{{Fm_{#1}(#2)}}   % formula algebra - this expression mentions both signature (#1) and set of variables (#2)
\newcommand{\Fi}[2]{{Fi_{#1}(#2)}}   % filters on a structure - this expression mentions both logic (#1) and the structure (#2)
\newcommand{\FilPa}{\mathcal{F}}          % the functor making a filter pair out of a logic
    \newcommand{\FP}[1]{\mathbf{FP}_{#1}}          % the category of filter pairs yielding the logic (#1) --- see the section ``filter pairs yielding a fixed logic''
\newcommand{\FPm}[1]{\FP{#1}^{\text{mono}}}          % the category of mono filter pairs yielding the logic (#1) --- see the section ``filter pairs yielding a fixed logic'
\newcommand{\FPi}[1]{\FP{#1}^{\text{incl}}} 
\newcommand{\freeFP}[1]{\textrm{free-}\mathbf{FP}_{#1}}          % the category of free filter pairs yielding the logic (#1) --- see the section ``filter pairs yielding a fixed logic''
\newcommand{\freeFPm}[1]{\freeFP{#1}^{\text{mono}}}          % the category of free mono filter pairs yielding the logic (#1) --- see the section ``filter pairs yielding a fixed logic''
\newcommand{\Th}{\textrm{Th}} % theory lattice of a logic
\newcommand{\Log}[1] {\mathcal{L}_{#1}} %abstract logic over the algebra #1 associated to a filter pair
\newcommand{\AL}{\mathbf{AL}}
\newcommand{\CLat}{\mathbf{CLat}}
\newcommand{\Lk}{\mathbf{Lat_\kappa}}
\newcommand{\SigmaStr}{\Sigma\text{-}\mathbf{Str}}
\newcommand{\Tb}{\mathbb{T}}
\newcommand{\Cc}{\mathcal{C}}
\DeclareMathOperator{\card}{card}
\DeclareMathOperator{\id}{id}
\begin{document}

\title{Filter pairs and natural extensions of logics%\thanks{Grants or other notes
%about the article that should go on the front page should be
%placed here. General acknowledgments should be placed at the end of the article.}
}
%\subtitle{Do you have a subtitle?\\ If so, write it here}

%\titlerunning{Short form of title}        % if too long for running head

\author{Peter Arndt \and
        Hugo Luiz Mariano \and
        Darllan Conceição Pinto %etc.
}

%\authorrunning{Short form of author list} % if too long for running head

\institute{P. Arndt \at
              Heinrich-Heine-Universit\"at D\"usseldorf-Germany \\
              %Tel.: +123-45-678910\\
              %Fax: +123-45-678910\\
              \email{peter.arndt@uni-duesseldorf.de}           %  \\
%             \emph{Present address:} of F. Author  %  if needed
           \and
           H. L. Mariano \at
              University of S\~ao Paulo-Brazil\\
              %Tel.: +123-45-678910\\
              %Fax: +123-45-678910\\
              \email{hugomar@ime.usp.br} 
           \and
           D. C. Pinto \at
              Federal University of Bahia-Brazil\\
              %Tel.: +123-45-678910\\
              %Fax: +123-45-678910\\
              \email{darllan@ufba.br}\\
              This author was funded by FAPESB, Grant APP0072/2016.
}

\date{Received: date / Accepted: date}
% The correct dates will be entered by the editor

\maketitle

\begin{abstract}
We adjust the notion of finitary filter pair, which was coined for creating and analyzing finitary logics, in such a way that we can treat logics of cardinality $\kappa$, {where $\k$ is a regular cardinal}. The corresponding new notion is called $\kappa$-filter pair. A filter pair can be seen as a presentation of a logic, and we ask what different $\kappa$-filter pairs give rise to a fixed logic of cardinality $\kappa$. To make the question well-defined we restrict to a subcollection of filter pairs and establish a bijection from that collection to the set of natural extensions of that logic by a set of variables of cardinality $\kappa$.

 Along the way we use $\kappa$-filter pairs to construct natural extensions for a given logic, work out the relationships between this construction and several others proposed in the literature, and show that the collection of natural extensions forms a complete lattice.
 
 In an optional section we introduce and motivate the concept of a general filter pair.

%\begin{abstract}
%We adjust the notion of finitary filter pair, which %was coined for creating and analyzing finitary %logics, in such a way that we can treat logics of %cardinality $\kappa$, {where $\k$ is a regular %cardinal}. The corresponding new notion is called %$\kappa$-filter pair. We show that any %$\kappa$-filter pair gives rise to a logic of %cardinality $\kappa$ and that every logic of %cardinality $\kappa$ comes from a $\kappa$-filter %air. We use $\kappa$-filter pairs to construct %natural extensions for a given logic and work out %the relationships between this construction and several others proposed in the literature. 

%{\color{red}{

%Conversely, we describe the class of filter pairs giving rise to a fixed logic in terms of the natural extensions of that logic.
%This technical devise of $\kappa$-filter pairs allows  shed some light on previous results on the literature on natural extensions and also profound some results, for instance we describe a complete lattice of natural extensions of a given logic.

%Talvez ja ajustar o abstract mencionando os pontos:  que as extensoes naturais é uma decorrencia de estudo de caso do conceito de par filtro (COLOCAR ISTO NA INTRODUCAO?) que recupera e esclarece casos estudados na literatura (citrar Prenosil) e facilta/possibilita ir um passo adinate com a obtençoa de um resultado novo: existe um reticulado completto de extensoes naturais (Corollary \ref{natextlattice})
%
%}}

\keywords{Abstract Algebraic Logic \and Natural Extensions \and Continuous Lattices.}
% \PACS{PACS code1 \and PACS code2 \and more}
\subclass{MSC2020 03G27 \and MSC2020 06B35}
\end{abstract}

\section{Introduction}
\label{intro}
In this work we adjust the notion of finitary filter pair from \cite{AMP}, which was coined for creating and analyzing finitary logics, in such a way that we can treat non-finitary logics.

\medskip

{\bf Filter pairs:} 
In \cite{AMP} the notion of finitary filter pair was introduced. The starting point for this definition was the fact that for every finitary logic, with set of formulas $Fm$, the lattice of theories is an algebraic lattice contained in the powerset, $\text{Th} \subseteq \wp(Fm)$, and this lattice completely determines the logic. This lattice is closed under arbitrary intersections and directed unions. The structurality of the logic means that the preimage under a substitution of a theory is a theory again or, equivalently, that the following diagram commutes for every substitution $\sigma$, seen as an endomorphism of the algebra of formulas:

$$\xymatrix{
Fm \ar[d]_\sigma & \text{Th} \ar@{^(->}[r]^-i & \wp(Fm) \\
Fm & \text{Th} \ar[u]^{\sigma^{-1}|_{_{Th}}} \ar@{^(->}[r]^-i & \wp(Fm) \ar[u]_{\sigma^{-1}}
}$$
This says that the inclusion of theories into the power set is a natural transformation, in the sense of category theory. 

%{\color{blue}
Passing from just the formula algebra to arbitrary $\Sigma$-structures (where $\Sigma$ is the signature of the logic), the role of theories can be replaced by the more general notion of filter. The corresponding considerations then apply: preimages of filters under homomorphisms of $\Sigma$-structures are filters again, and this can be rephrased as saying that the inclusions of filters into the full power sets of $\Sigma$-structures form a natural transformation.

Finally, replacing the lattice of filters with a more abstract lattice, we arrive at the notion of finitary filter pair: In \cite{AMP} a \emph{finitary filter pair} over a signature $\Sigma$ was defined to be a pair $(G,i)$, where $G \colon \SigmaStr^{op} \to \AL$ is a functor from $\Sigma$-structures to algebraic lattices and $i$ is a natural transformation from $G$ to the contravariant power set functor $\SigmaStr^{op} \to \AL, \ A \mapsto \wp(A)$. The transformation $i$ is required to preserve, objectwise, arbitrary infima and directed suprema.

The intuition offered in \cite{AMP} about the notion of filter pair was that it is a \emph{presentation} of a logic, different in style from the usual presentations by axioms and rules or by matrices. Instead, it is a direct presentation of the lattice of theories as the image of a map of ordered sets. The required properties ensure that the image really is the theory lattice of a finitary logic. %More generally a filter pair, since it provides lattices for \emph{all} $\Sigma$-structures, gives a coherent system of generalized matrices, tied together by the naturality condition. 
A filter pair can provide useful structure for analyzing the associated logic.
In Section \ref{SectionFilterPairs} below we introduce filter pairs more carefully and indicate some of their uses.

\medskip

{\bf ${\kappa}$-filter pairs:}
In the concept of finitary filter pair, the cardinality $\aleph_0$ is hidden in the notions of \emph{directed supremum} and \emph{algebraic lattice}: Recall that a subset $S$ of a poset $P$ is \emph{directed} if any finite set of elements of $S$, i.e. any set of cardinality smaller than $\aleph_0$, has a supremum in $S$. An element is \emph{compact} if, whenever it is smaller than or equal to the supremum of a directed set, it is smaller than or equal to one of the members of the directed set. A complete  lattice is \emph{algebraic} if every element is a supremum of compact elements.
An inspection of the proofs of \cite{AMP} shows that it is the condition that $i$ preserves directed suprema, that implies that the associated logic is finitary.

\emph{Our first aim in this article is to introduce $\kappa$-filter pairs}, a generalization of finitary filter pairs that allows to treat logics of all cardinalities. Here the cardinality of a logic is the smallest infinite cardinal $\kappa$ such that whenever $\Gamma \vdash \varphi$ holds for some formulas, one finds a subset $\Gamma' \subseteq \Gamma$ of cardinality strictly smaller than $\kappa$ such that $\Gamma' \vdash \varphi$. Thus finitary logics are the logics of cardinality $\aleph_0$.

The notion of $\kappa$-filter pair arises by replacing the (implicit) occurrences of the cardinal $\aleph_0$ in the definition of filter pair by a regular cardinal $\kappa$ in an appropriate way, see Definition \ref{DefinitionKappaFilterPair}. Doing this, one can show in a similar way as that of \cite{AMP} that $\kappa$-filter pairs give rise to logics of cardinality $\leq \kappa$ (Prop. \ref{PropLogicsFromFilterPairs}) and that vice versa every logic of cardinality $\kappa$ arises from a $\kappa$-filter pair (Theorem \ref{TheoremEveryLogicComesFromAFilterPair}). Concretely, this is achieved by the so-called \emph{canonical filter pair} of the logic, where for a $\Sigma$-structure $A$ the lattice $G(A)$ is given by the collection of all $l$-filters on $A$, and $i_A$ is the inclusion into the power set.

% More precisely, one can associate to a given logic $l=(\Fm{\Sigma}{X}, {\vdash})$ a filter pair $\FilPa(l)$. Conversely, for every set of variables $Y$ one obtains from a filter pair $(G,i)$ a logic $\Log{Y}(G,i)$. We show in Theorem \ref{TheoremEveryLogicComesFromAFilterPair} that $\Log{X}(\FilPa(l))=l$, so that in particular every logic of cardinality $\kappa$ arises from a $\kappa$-filter pair.

\medskip

%{\color{blue}
{\bf Filter pairs giving rise to a fixed logic:} 
The intuition of a filter pair as a presentation of a logic raises the question which different filter pairs give rise to the same fixed logic% $l=(\Fm{\Sigma}{X}, {\vdash})$
. \emph{The study of this question is our second aim in this article.}

The collection of all these filter pairs, even up to an appropriate notion of isomorphism, forms a proper class%, and there is no hope of obtaining a meaningful parametrization of them, see Remark \ref{RemarkProperClassOfEquivalentFilterPairs}.
(see Remark \ref{RemarkProperClassOfEquivalentFilterPairs}), and a classification is not only hopeless, but would also be unilluminating.

To obtain a meaningful parametrization of all filter pairs giving rise to a fixed logic, one has to review, what precisely one wants a filter pair to be a presentation of. From a filter pair  we can not only extract a logic, but also a structure of generalized matrix for every $\Sigma$-structure. Thus a filter pair can be seen as presentation of a coherent family of generalized matrices, tied together by the naturality assumption. 
If we identify two filter pairs that give back precisely the same generalized matrices, then each equivalence class contains a unique filter pair $(G,i)$ for which the maps $i_A$ are injective, see Remark \ref{RemarkProperClassOfEquivalentFilterPairs}. Such a filter pair is called \emph{mono filter pair}.
Thus mono filter pairs correspond to coherent systems of generalized matrices. The classification of such systems is an interesting, but still very difficult question.

If we further identify two filter pairs if they give back the same \emph{logics} -- i.e. if we only look at the lattices associated to absolutely free algebras -- the equivalence classes are in bijection with the so-called \emph{free mono filter pairs}. We are able to classify the free mono filter pairs giving rise to a fixed logic, in terms of the \emph{natural extensions} (see the next paragraph for this notion) of that logic:
\begin{TeoNoNr}\emph{(Theorem \ref{TeoLatticeIsoBetweenFreeMonoFilterPairsAndNatExts})}
Let $l$ be  logic of cardinality $\kappa$. The free mono filter pairs presenting $l$ are in bijection with the natural extensions of $l$ to a set of variables of cardinality $\kappa$.
 \end{TeoNoNr}

Since for finitary logics the lattice of natural extensions is trivial, this is a genuinely new aspect arising for $\kappa$-filter pairs.

%} fim do cor azul

\medskip

%In considering $\kappa$-filter pairs, one is led to the topic of natural extensions. 
{\bf Natural extensions:} 
A natural extension of a logic $l=(\Fm{\Sigma}{X}, {\vdash})$ to a set of variables $Y$ is a conservative extension to $\Fm{\Sigma}{Y}$ which has the same cardinality as $l$. {This notion appears in some proofs of transfer theorems in Abstract Algebraic Logic.} Clearing up some misconceptions from the literature, Cintula and Noguera \cite{CintulaNoguera} showed that a certain proposed construction of a natural extension could fail. They gave sufficient conditions for the existence and uniqueness of natural extensions, and asked whether there always exists a unique natural extension to a given set of variables. Shortly after, P{\v r}enosil \cite{Prenosil} gave two constructions of natural extensions \emph{of logics whose cardinality is a regular cardinal $\kappa$}, a maximal and a minimal one. He also gave further results on proposed solutions from the literature, and showed that there can be several different natural extensions of a logic, answering the uniqueness question in the negative. 

\medskip

In Corollary \ref{CorollaryKappaFilterPairPresentingLogicOfCardinalityKappaAlsoYieldsNaturalExtensions} we show that a filter pair provides \emph{natural extensions} of a logic to all sets of variables, thus giving an alternative proof for the existence of natural extensions for logics of regular cardinality. 

In Remark \ref{ObsPorqueTratamosDoCasoRegular} we explain where the regularity assumption, left implicit by P{\v r}enosil, enters. The question of the existence of natural extensions for logics of singular cardinality remains open, but in Corollary \ref{CorExtNatCasoSingular} we give the next best solution, showing that there are conservative extensions whose cardinality is the next regular cardinal.
%\footnote{\color{red}{The notion of filter pairs shed some light on the question of singularity/regularity hypothesis on the cardinality of a logic, but still remains open a full description of natural extensions for logics of singular cardinality.}}

In the literature one finds tentative constructions of consequence relations ${\vdash}^{\textrm{\tiny {\L}S}}, {\vdash}^{\textrm{\tiny SS}}, {\vdash}^-$ and ${\vdash}^+_\kappa$ of which the first one can fail to be structural, the second one can fail to satisfy idempotence and the last are the two are the minimal, resp. maximal, natural extensions found by P{\v r}enosil in \cite{Prenosil}. We summarize the definitions and the known interrelations between these proposed solutions at the beginning of Section 5.

We identify the natural extension given by the canonical filter pair with \linebreak P{\v r}enosil's minimal one, and complete the picture painted by Cintula, Noguera and P{\v r}enosil in the following result.

\begin{TeoNoNr}\emph{(Theorem \ref{TheoremInclusionsBetweenTheRelations})}
 Given sets $X \subseteq Y$ of variables and a logic \linebreak 
 $l=(\Fm{\Sigma}{X}, {\vdash})$, we have the following inclusions between the associated relations on $\Fm{\Sigma}{Y}$:
 $${\vdash}^{\textrm{\tiny {\L}S}} \ \ \subseteq \ \ {\vdash}^{\textrm{\tiny SS}} \ \ \subseteq \ \ {\vdash}^-  \ \ = \ \ {\vdash}_{\Log{Y}(\FilPa(l))}  \ \ \subseteq \ \ {\vdash}^+_\kappa.$$
The second relation is the structural closure of the first one and the third is the idempotent closure of the second one.
 \end{TeoNoNr}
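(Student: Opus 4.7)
My plan is to chain through the four links left-to-right, exploiting the defining constructions at the outer links and reserving the real work for the middle equality.

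First, the two leftmost inclusions and the two characterizations in the second sentence are essentially definitional, given the constructions summarized at the start of Section~3. By construction $\vdash^{\textrm{\tiny SS}}$ is obtained from $\vdash^{\textrm{\tiny {\L}S}}$ by closing under substitutions, which only adds pairs, yielding $\vdash^{\textrm{\tiny {\L}S}} \subseteq \vdash^{\textrm{\tiny SS}}$; since applying the structural closure to an already structural relation is idle, $\vdash^{\textrm{\tiny SS}}$ is indeed the structural closure of $\vdash^{\textrm{\tiny {\L}S}}$. Likewise $\vdash^-$ is by construction the idempotent closure of $\vdash^{\textrm{\tiny SS}}$, so that $\vdash^{\textrm{\tiny SS}} \subseteq \vdash^-$. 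I would state both as one-line observations.

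The main content is the equality $\vdash^- = \vdash_{\Log{Y}(\FilPa(l))}$, which I would prove by double inclusion using minimality. For the $\subseteq$ direction, Proposition~\ref{PropLogicsFromFilterPairs} applied to the filter pair $\FilPa(l)$ on $\Fm{\Sigma}{Y}$ produces a structural idempotent relation of cardinality $\kappa$, and Corollary~\ref{CorollaryKappaFilterPairPresentingLogicOfCardinalityKappaAlsoYieldsNaturalExtensions} shows this is a conservative extension of $l$, hence a natural extension; since $\vdash^-$ is P{\v r}enosil's \emph{minimal} natural extension, the inclusion follows. For the reverse direction I would unpack the theory lattice of $\Log{Y}(\FilPa(l))$ directly: by construction of $\FilPa(l)$, the image of $i_{\Fm{\Sigma}{Y}}$ consists of those subsets of $\Fm{\Sigma}{Y}$ arising as $\kappa$-intersections of substitutional preimages $\sigma^{-1}(T)$ with $T$ a theory of $l$ and $\sigma\colon \Fm{\Sigma}{Y} \to \Fm{\Sigma}{X}$ a substitution. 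I would then check that this description matches P{\v r}enosil's explicit characterization of the theories of his minimal natural extension, yielding the identification.

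For the final link $\vdash_{\Log{Y}(\FilPa(l))} \subseteq \vdash^+_\kappa$, once we know $\vdash_{\Log{Y}(\FilPa(l))}$ is a natural extension, maximality of $\vdash^+_\kappa$ closes the argument without further work.

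I expect the hard step to be the reverse inclusion in the middle equality: matching the theory lattice coming from the filter pair with the specific closure system used by P{\v r}enosil to describe his minimal extension. The delicate point is correctly tracking $\kappa$-directed suprema — one must use regularity of $\kappa$ together with the fact, recorded in Section~2, that $i$ preserves $\kappa$-directed suprema, to confirm that the filter pair produces no ``extra'' closed sets beyond those forced by $\vdash^-$.
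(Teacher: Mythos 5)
There is a genuine gap, and it sits exactly where you predicted the hard work would be. Your plan for the inclusion $\vdash_{\Log{Y}(\FilPa(l))} \ \subseteq\ \vdash^-$ rests on the claim that the image of $i_{\Fm{\Sigma}{Y}}$ for the canonical filter pair consists of the $\kappa$-intersections of preimages $\sigma^{-1}(T)$ with $\sigma\colon \Fm{\Sigma}{Y}\to\Fm{\Sigma}{X}$ a substitution and $T$ an $l$-theory. That description is false in general: such intersections are precisely theories of the relation $\vdash^+$ (substitutions \emph{out of} $\Fm{\Sigma}{Y}$), whereas the image of $i_{\Fm{\Sigma}{Y}}$ is the set of \emph{all} $l$-filters on $\Fm{\Sigma}{Y}$, i.e.\ the sets closed under images of the rules along homomorphisms $v\colon\Fm{\Sigma}{X}\to\Fm{\Sigma}{Y}$ (substitutions \emph{into} $\Fm{\Sigma}{Y}$). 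Every intersection of preimages is a filter, but not conversely: the filters are the theories of the \emph{minimal} natural extension, while your proposed description yields only the theories of the \emph{maximal} one. If your ``matching'' with P{\v r}enosil's characterization could be carried out, it would force $\vdash^-\ =\ \vdash^+_\kappa$, contradicting P{\v r}enosil's non-uniqueness examples. The paper instead proves this direction by showing that the $l$-filters on $\Fm{\Sigma}{Y}$ are exactly the sets closed under the Shoesmith--Smiley operator, so that $\vdash_{\Log{Y}(\FilPa(l))}$ is the idempotent hull of $\vdash^{\textrm{\tiny SS}}$ (Proposition \ref{PropOurNaturalExtensionIsIdempotentHullOfShoesmithSmileysRelation}); combined with the easy inclusion $\vdash^{\textrm{\tiny SS}}\ \subseteq\ \vdash^-$ (Lemma \ref{LemmaInclusionShoesmithSmileyMinimalNatExt}) and idempotence of $\vdash^-$, the hull lands inside $\vdash^-$, and minimality (your correct direction) gives equality.

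A second, related problem is that you treat the two characterizations in the last sentence as definitional, which is circular. The relation $\vdash^{\textrm{\tiny SS}}$ is \emph{not} defined as the structural closure of $\vdash^{\textrm{\tiny {\L}S}}$ (that identification is Proposition \ref{PropShoesmithSmileyIsStructuralClosureOfLosSuszko}, though its proof is short), and $\vdash^-$ is \emph{not} defined as the idempotent closure of $\vdash^{\textrm{\tiny SS}}$: it is defined as the least consequence relation on $\Fm{\Sigma}{Y}$ containing the rules of $l$, and identifying it with the idempotent hull of $\vdash^{\textrm{\tiny SS}}$ is precisely the substantive content proved in Corollary \ref{CorIdempotentHullOfShoesmithSmileyIsMinimalNatExt} (via the filter-pair identification above). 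Asserting it ``by construction'' assumes part of what the theorem states. The outer links of your chain ($\vdash^{\textrm{\tiny {\L}S}}\subseteq\vdash^{\textrm{\tiny SS}}$, the minimality argument for $\vdash^-\subseteq\vdash_{\Log{Y}(\FilPa(l))}$, and the final inclusion by maximality of $\vdash^+_\kappa$) are fine and agree with the paper.
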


\medskip 

While the topic of natural extensions has some technical importance, we largely agree with P{\v r}enosil that in considering a particular logic for a concrete purpose one can, in probably all cases, just endow it from the beginning with sufficiently many variables to escape the questions about existence and uniqueness.
Our interest in natural extensions in this article is mainly that they give a solution to the ``reverse engineering'' question of parametrizing all filter pairs which present a given logic.

As a byproduct of the discussion we obtain the following new result, which is of independent interest:

\begin{TeoNoNr}\emph{(Corollary \ref{natextlattice})}
Let $l$ be a logic of regular cardinality $\kappa$. The set of natural extensions of $l$ with respect to a fixed set of variables of cardinality $\kappa$, ordered by deductive strength, is a complete lattice.
\end{TeoNoNr}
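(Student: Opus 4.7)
The plan is to exhibit arbitrary joins in $NE(l,Y)$ (the poset of natural extensions of $l$ ordered by deductive strength); since a poset with all joins is automatically a complete lattice, this suffices. The empty join will recover P\v{r}enosil's minimal natural extension $\vdash^-$ from Theorem \ref{TheoremInclusionsBetweenTheRelations}, the total join recovers the maximal one $\vdash^+_\kappa$, and meets are obtained in the standard way as joins of the set of lower bounds.

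Given a family $\{\vdash_\alpha\}_{\alpha\in A}$ in $NE(l,Y)$, let $\vdash^\vee$ denote the smallest structural consequence relation on $\Fm{\Sigma}{Y}$ containing every $\vdash_\alpha$. Under the Galois correspondence between structural consequence relations and structural closure systems, the theory family of $\vdash^\vee$ is $\bigcap_\alpha \mathrm{Th}(\vdash_\alpha)$. The verifications needed are three. First, \emph{structurality}: substitutional preimages commute with intersections, so $\bigcap_\alpha \mathrm{Th}(\vdash_\alpha)$ is still closed under substitutional preimages. Second, \emph{cardinality~$\kappa$}: on the closure-system side this is the property of being closed under $\kappa$-directed unions, and a $\kappa$-directed family lying in $\bigcap_\alpha \mathrm{Th}(\vdash_\alpha)$ is $\kappa$-directed in each factor, whose union lies in every factor by $\kappa$-algebraicity, hence in the intersection. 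Third, \emph{conservativity}: $\vdash^\vee$ lies below the common upper bound $\vdash^+_\kappa$ and above each $\vdash_\alpha \supseteq \vdash$, so restricting to $\Fm{\Sigma}{X}$ yields $\vdash\ \subseteq\ \vdash^\vee|_{\Fm{\Sigma}{X}}\ \subseteq\ \vdash^+_\kappa|_{\Fm{\Sigma}{X}}\ =\ \vdash$, with equality throughout.

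The main subtlety is the preservation of cardinality. ``Cardinality~$\kappa$'' is not hereditary under sub- or super-relations of consequence relations, so one cannot conclude it for $\vdash^\vee$ just by sandwiching between $\vdash$ and $\vdash^+_\kappa$. Passing to the closure-system side, however, converts this into the manifestly preservable property of closure under $\kappa$-directed unions, which makes the argument routine. Combining the three verifications, $\vdash^\vee$ is a natural extension of $l$ to $Y$ and is, by construction, the join of the family in $NE(l,Y)$; hence $NE(l,Y)$ has all joins and is a complete lattice.
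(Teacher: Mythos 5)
Your proof is correct, but it takes a genuinely different route from the paper. The paper obtains this statement as a byproduct of Theorem \ref{TeoLatticeIsoBetweenFreeMonoFilterPairsAndNatExts}: it sets up an order isomorphism between the poset of natural extensions to a set $Z$ of cardinality $\kappa$ and the quotient of the pre-ordered class $\freeFPm{l}$ of free mono filter pairs presenting $l$, and then imports completeness from the filter-pair side, where nonempty set-indexed suprema are constructed pointwise on \emph{every} $\Sigma$-structure (Prop. \ref{PropMonoFilterPairsPresentingFixedLogicAreCompleteLattice}) and the bottom is supplied by the canonical filter pair $\FilPa(l)$, i.e. the minimal extension (Prop. \ref{PropTerminalObjectInFilterPairsPresentingAFixedLogic}, Remark \ref{ObsSeForConjuntoEntaoEReticuladoCompleto}). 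You instead work entirely on $\Fm{\Sigma}{Y}$: the join of a nonempty family of natural extensions is the consequence relation whose theory family is $\bigcap_\alpha \mathrm{Th}(\vdash_\alpha)$, with structurality preserved by intersection, $\kappa$-arity rephrased as closure under $\kappa$-directed unions (this is where the standing regularity assumption enters, exactly as in the Section 1 correspondence), and conservativity obtained by sandwiching between the $\vdash_\alpha$ and $\vdash^+_\kappa$. This is essentially the same closure-operator computation that the paper performs inside Prop. \ref{PropMonoFilterPairsPresentingFixedLogicAreCompleteLattice}, but carried out only at the free algebra; what your argument buys is a self-contained, filter-pair-free proof that applies uniformly to any fixed $Y \supseteq X$, while the paper's detour buys the stronger structural statement (the identification of natural extensions with free mono filter pairs), of which the corollary is a consequence. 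Two small points you should make explicit: your join construction directly yields cardinality $\leq\kappa$, and the exact value $\kappa$ follows from conservativity over $l$ (the same observation as in Corollary \ref{CorollaryKappaFilterPairPresentingLogicOfCardinalityKappaAlsoYieldsNaturalExtensions}); and the empty join is not an instance of your construction (the smallest structural consequence relation on $\Fm{\Sigma}{Y}$ is not an extension of $l$) -- the bottom of the poset is P{\v r}enosil's $\vdash^-$, whose existence you must quote (or take from Theorem \ref{TheoremInclusionsBetweenTheRelations}), after which all meets exist as joins of the nonempty sets of lower bounds.
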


\medskip

{\bf Overview of the article:} In section \ref{SectionPreliminaries} we collect the necessary notions and standard results. In Section \ref{SectionFilterPairs} we introduce general filter pairs and motivate their study by some examples of application. In Section \ref{SectionFilterFunctors} we introduce $\kappa$-filter pairs. In Section \ref{SectionNaturalExtensions} we discuss natural extensions and how filter pairs give rise to them. 
In Section \ref{SectionFilterPairsYieldingAFixedLogic} we investigate the collection of filter pairs that yield a fixed logic. We finish with some final remarks in Section \ref{SectionFinalRemarks}.

Apart from Section \ref{SectionFilterPairsYieldingAFixedLogic}, the sections can be read independently to some extent, possibly referring back to Section \ref{SectionPreliminaries} for definitions:
The reader who wishes to just get an impression of what filter pairs are about, can simply read Section \ref{SectionFilterPairs}. A reader who is familiar with finitary filter pairs and wants to learn about $\kappa$-filter pairs, can directly jump to \ref{SectionFilterFunctors}. A reader who only wants to learn about the relations between the classical tentative constructions of natural extensions can directly jump to Section \ref{SectionNaturalExtensions} and ignore the mentions of filter pairs there.

\medskip

{\bf Acknowledgments:} We thank the referee for the detailed remarks, which greatly improved the readability of the article.

%%%%%%%%%%%%%%%%%%%%%%%%%%%%%%%%%%%%%%%%%%%%%%%%%%%%%%%%%%%%%%%%%%%%%%%%%%%%%%%%%%%%%%%%%%%%%%%%%%%%%%%%%%%%%%
%
%                                       Section Preliminaries
%
%%%%%%%%%%%%%%%%%%%%%%%%%%%%%%%%%%%%%%%%%%%%%%%%%%%%%%%%%%%%%%%%%%%%%%%%%%%%%%%%%%%%%%%%%%%%%%%%%%%%%%%%%%%%%%

\section{Preliminaries}\label{SectionPreliminaries}

{In this section, we recall the basic definitions and results on logic, closure operators and complete lattices and their relative versions associated to an infinite cardinal $\kappa$.}

\begin{Df}\label{DefConsequenceRelation}
A signature is a sequence of pairwise disjoint sets $\Sigma=(\Sigma_{n})_{n\in \N}$. The set $\Sigma_{n}$ is called the set of $n$-ary connectives. For a set $X$ we denote by $\Fm{\Sigma}{X}$ the absolutely free algebra over $\Sigma$ generated by $X$, also called the set of formulas with variables in $X$.

A consequence relation is a relation ${\vdash}\subseteq\wp(\Fm{\Sigma}{X})\times \Fm{\Sigma}{X}$, on a signature $\Sigma=(\Sigma_{n})_{n\in \N}$, such that, for every set of formulas $\Gamma,\Delta$ and every formula $\varphi,\psi$ of $\Fm{\Sigma}{X}$, it satisfies the following conditions:
\begin{itemize}
\item[$\circ$]{\bf Reflexivity:} If $\varphi\in\Gamma,\ \Gamma\vdash\varphi$
\item[$\circ$]{\bf Cut:} If $\Gamma\vdash\varphi$ and for every $\psi\in\Gamma,\ \Delta\vdash\psi$, then $\Delta\vdash\varphi$
\item[$\circ$]{\bf Monotonicity:} If $\Gamma\subseteq\Delta$ and $\Gamma\vdash\varphi$, then $\Delta\vdash\varphi$
%\item[$\circ$]$\bf{Finitarity:}$If $\Gamma\vdash\varphi$, then there is a finite subset $\Delta$ of $\Gamma$ such that $\Delta\vdash\varphi$.
\item[$\circ$] {\bf Structurality:} If $\Gamma\vdash\varphi$ and $\sigma$ is a substitution\footnote{I.e.  $\sigma \in hom_{\Sigma}(\Fm{\Sigma}{X},\Fm{\Sigma}{X})$.}, then $\sigma(\Gamma)\vdash\sigma(\varphi)$
\end{itemize}
\end{Df}

The notion of logic that we consider is the following:

\begin{Df}
A logic is a triple $(\Sigma, X, {\vdash})$ where $\Sigma$ is a signature, $X$ is an {{{\bf infinite}}} set of variables and ${\vdash}$ is a consequence relation on $\Fm{\Sigma}{X}$. We often write a logic as a pair $(\Fm{\Sigma}{X},{\vdash})$, with the datum of the signature and the set of variables combined into that of the formula algebra.
\end{Df}

Note that for the considerations in this article the set of variables needs to be part of the definition of logic.

\begin{Df}
A {\bf closure operator} in a set $A$ is a function $c : P(A) \to P(A)$ that is inflationary, increasing and idempotent. We denote by $({\cal C}(A), \leq)$ the poset of closure operators in $A$ ordered setwise by inclusion.
\end{Df}

We will freely switch between the two formulations of a logic as a consequence relation on the set of formulas and as a certain closure operator on that set. The properties of Def. \ref{DefConsequenceRelation} translate to the operator $C_{{\vdash}}\colon \Gamma \mapsto \{\varphi \mid \Gamma \vdash \varphi\}$ being increasing, idempotent, order preserving and structural, respectively.

%\vtres

\begin{Df}\label{DefTheoryFilterLatticeOfFilters}
Let $\Sigma$ be a signature,  $l=(\Sigma, X, {\vdash})$ be a logic and $M$ a $\Sigma$-algebra. 

\begin{itemize} 

\item A subset $T$ of $Fm_\Sigma(X)$ is an {\bf $l$-theory} if for every $\Gamma\cup\{\varphi\}\subseteq Fm_\Sigma(X)$ such that $\Gamma\vdash \varphi$, if $\Gamma\subseteq T$ then $\varphi\in T$. 
Equivalently, an $l$-theory is a ${\vdash}$-closed subset of $Fm_\Sigma(X)$.

\item A subset $F$ of $M$ is an {\bf $l$-filter on $M$} if for every $\Gamma\cup\{\varphi\}\subseteq Fm_\Sigma(X)$ such that $\Gamma\vdash \varphi$ and every valuation (i.e. $\Sigma$-homomorphism) $v: Fm_\Sigma(X) \to M$, if $v(\Gamma)\subseteq F$ then $v(\varphi)\in F$.

\item A pair $(M,F)$, where $F$ is an $l$-filter on $M$, is called an {\bf $l$-matrix}.

%The pair $\langle M,F\rangle$ is a matrix of $l$. %The set of all matrices of $l$ is denoted by $Matr_{l}$.
%Let $l=(\Sigma, {\vdash})$ be a logic defined over a $\Sigma$-structure $M$. Let $A$ be a $\Sigma$-structure. Then a subset $F \subseteq A$ is an $l$-\emph{filter} on $A$ (or equivalently  $(A,F)$ is a \emph{matrix} for $l$) if the following is satisfied: Given $\Gamma \cup \{\varphi\} \subseteq M$ such that $\Gamma {\vdash}_l \varphi$ and a morphism $v \colon M \to A$ satisfying $v(\Gamma) \subseteq F$ one has $v(\varphi) \in F$.

\item We denote the collection of all $\,l$-filters on $M$ by $\Fi{l}{M}$ and  $\iota_l(M) \colon$  $\Fi{l}{M}$  $\hookrightarrow \wp(M)$ denotes the inclusion.

\end{itemize}

\end{Df}

Note that, by structurality, a subset $T \subseteq Fm_\Sigma(X)$ is an $l$-theory iff it is an $l$-filter.

%\begin{Df} %\label{translation-df}
%Let $l = (\Sigma, X, {\vdash}), l' =(\Sigma, X', {\vdash}')$ be  logics over a signature $\Sigma$. A {\bf translation} $t : l \to l'$ is just a $\Sigma$-homomorphism $t : Fm_\Sigma(X) \to Fm_\Sigma(X')$ such that for each $\Gamma \cup \{\varphi\} \subseteq Fm_\Sigma(X)$, if $\Gamma {\vdash} \varphi$, then $t(\Gamma) {\vdash} t(\varphi)$.

%\end{Df}

\begin{Df} \label{translation-df}
Let $l = (\Sigma, X, {\vdash}), l' =(\Sigma, X', {\vdash}')$ be  logics over a signature $\Sigma$ and let  $t : Fm_\Sigma(X) \to Fm_\Sigma(X')$ be a  $\Sigma$-homomorphism. 

  $t : l \to l'$ is  a {\bf translation} (respectively, a {\bf conservative translation}) whenever for each $\Gamma \cup \{\varphi\} \subseteq Fm_\Sigma(X)$ we have 

$\Gamma \vdash \varphi \Rightarrow t(\Gamma) \vdash t(\varphi)$ (respectively,
$\Gamma \vdash \varphi \Leftrightarrow t(\Gamma) \vdash t(\varphi)$).

\end{Df}

\begin{Notation}
For a cardinal $\kappa$, we write  $P_{< \kappa}(\Gamma):=\{ \Gamma' \subseteq \Gamma \mid |\Gamma'|<\kappa\}$ for the set of subsets of cardinality smaller than $\kappa$. 
\end{Notation}

%\begin{Df}
%Let $l=(\Sigma,{\vdash})$ be a logic and $A\in \Sigma-Str$. A subset $F$ of $A$ is a filter if for every $\Gamma\cup\{\varphi\}\subseteq \Fm{\Sigma}{X}$ such that $\Gamma{\vdash} \varphi$ and a valuation $v:\Fm{\Sigma}{X}\to A$, if $v[\Gamma]\subseteq F$ then $v(\varphi)\in F$. %The pair $\langle M,F\rangle$ is a matrix of $l$. %The set of all matrices of $l$ is denoted by $Matr_{l}$.
%\end{Df}

\begin{Df}\label{DefCardinalityOfALogic}
Let ${\vdash} \ \subseteq P(A) \times  A$ be a relation between subsets and elements of a given set $A$.

\begin{itemize}
    \item  Let $\kappa$ be an infinite cardinal. The relation ${\vdash}$  is  {\bf $\kappa$-ary}  if for every subset $\Gamma \cup \{\varphi\} \subseteq A$ if $\Gamma \vdash \varphi$ then there exists $\Gamma' \in P_{< \kappa}(\Gamma)$ and $\Gamma' \vdash \varphi$. 
    
    \item  The {\bf cardinality} of a relation ${\vdash}$
is the smallest infinite cardinal $\kappa$ such that ${\vdash}$ is a $\kappa$-ary relation.
\end{itemize}

\end{Df}

\begin{Df}\label{DefKappaAryPartOfALogic}
 Given a relation ${\vdash}$ between subsets and elements of a set, its {\bf $\kappa$-ary part}, ${\vdash}_\kappa$, is defined by $\Gamma \vdash_\kappa \varphi :\Leftrightarrow \ \ \exists \Gamma' \in P_{< \kappa}(\Gamma) \textrm{ with } \Gamma' \vdash \varphi$.
\end{Df}

\begin{Df}\label{DefKappaLogic}
 %Given a relation ${\vdash}$ between subsets and elements of a given set, its {\bf  $\kappa$-ary part}, ${\vdash}_\kappa$, is defined by $\Gamma {\vdash}_\kappa \varphi :\Leftrightarrow \ \ \exists \Gamma' \subseteq \Gamma \textrm{ with } |\Gamma'| < \kappa \textrm{ and } \Gamma' {\vdash} \varphi$.
 
 If $l= (\Fm{\Sigma}{X}, {\vdash})$ is a logic such that ${\vdash} \ =  \ {\vdash}_\k$, then $l$ will be called a {\bf $\k$-logic}. This means that $l$ is a logic of cardinality $\leq \k$.
\end{Df}

\begin{Obs}
A logic $(\Fm{\Sigma}{X}, {\vdash})$ is a $\k$-logic if and only if its associated closure operator satisfies $C_{\vdash}(\Gamma) = \bigcup_{\Gamma' \in P_{<\kappa}(\Gamma)} C_{\vdash}(\Gamma')$. In general, such a closure operator is called a $\kappa$-ary closure operator.
\end{Obs}

\begin{Obs} Let $l =(\Sigma, X, {\vdash})$ be a 
logic. Then the following are equivalent:
\begin{itemize}
    \item The $\k$-ary part of $l$ is a logic and  $id : (\Sigma, X,{\vdash}_\k) \to (\Sigma, X, {\vdash})$ is a conservative translation.

\item The logic $l$ is $\k$-ary.

\end{itemize}
\end{Obs}

%\textcolor{red}{ Maybe we should instead say that the cardinality of a logic $l$ is the smallest \underline{infinite} cardinal $\kappa$ such that if $\Gamma {\vdash}_l \varphi$ then there exists $\Gamma' \subseteq \Gamma$ with $|\Gamma'|< \kappa$ and $\Gamma' {\vdash}_l \varphi$.}

%\textcolor{red}{Recall here how the closure operator and the consequence relation correspond to each other, and e.g. that the consequence relation is $\kappa$-ary iff the closure operator $C$ satisfies $C(X):=\bigcup_{X' \subseteq X,\ |X'| \leq \kappa} C(X')$.}

\begin{Ex} \label{k-logic}
For every infinite cardinal $\kappa$ and any set of variables $X$, there is a logic of cardinality $\kappa$ over $X$.

There is nothing new to add in the case  $\kappa = \aleph_0$. If $\k > \aleph_0$, consider a signature $\Sigma$ by setting $\Sigma_0:=\{ c_\alpha \mid \alpha < \kappa\}$ and $\Sigma_n:=\emptyset$ for $n \geq 1$ -- we just have constant symbols and variables --
thus $\Fm{\Sigma}{X} = \Sigma_0 \cup X$.

Define a logic over $\Fm{\Sigma}{X}$ by taking the closure operator on $\Fm{\Sigma}{X}$  generated by the rules:

Let $\gamma < \k$ be a limit ordinal, $\gamma > 0$. Then $\{c_{\alpha+1} \mid \alpha < \gamma\} \vdash c_\gamma$. Thus, for each $\Gamma \subseteq \Fm{\Sigma}{X}$:

%\vspace{1.5mm}
\begin{center}
$C_{\vdash}({\Gamma}) = \Gamma\cup \{c_\gamma \mid \gamma$ is a limit ordinal, $ 0 < \gamma <\k$ and $\forall \alpha < \gamma$, $c_{\alpha+1} \in \Gamma\}$.
\end{center}
%\vspace{1.5mm}

This determines a logic with cardinality exactly $\kappa$. Since $c_\gamma$, for any limit ordinal $\gamma< \k$ that is not a cardinal can be derived by a minimal (non-trivial) set of hypotheses with cardinality equal to $\card(\gamma) < \k$, the cardinality of the logic is $\geq \k$. It is also clear that cardinality of the logic is $\leq \k$, since that is the cardinality of the language.
\end{Ex}

 Recall that an infinite cardinal $\kappa$ is called {\bf regular} if the union of fewer than $\kappa$ sets of cardinality less than $\kappa$ has cardinality less than $\kappa$ again. An infinite cardinal that is not regular is called {\bf singular}.

\begin{Obs} \label{regular-obs}

Let $\kappa$ be a regular cardinal.  Recall the following notions from lattice theory:

$\bullet$ \ 
A subset $S$ of a partially ordered set $P$ is called $\kappa$-directed if every subset of $S$ of cardinality strictly smaller than $\kappa$ has an upper bound in $S$. An example is given by the collection of all subsets of a set which have cardinality smaller than $\kappa$.

$\bullet$ \ In a partially ordered set $P$, an element $x \in P$ is called $\kappa$-\emph{small} if for every $\kappa$-directed subset $D \subseteq P$ one has $x \leq \sup D$ \ iff \ $\exists d \in D \colon x \leq d$  (e.g. the finite sets in a power set are $\aleph_0$-small, a.k.a. compact).

$\bullet$ \ A \emph{$\kappa$-presentable lattice}, or \emph{$\kappa$-algebraic lattice}, is a complete lattice such that every element is the supremum of the $\kappa$-small elements below it (e.g. power sets are $\kappa$-presentable for every $\kappa$). We will denote the category of $\kappa$-presentable lattices and all order preserving functions by $\Lk$.
\end{Obs}

\begin{Df}\label{DefRegularizationOfCardinal}
For each infinite cardinal $\k$, we denote  by $reg(\kappa)$ the least regular cardinal $\geq \kappa$. Note that if $\k$ is a singular cardinal, then $reg(\kappa) = \k^+$, thus, in general,  $reg(\kappa) \in \{\kappa, \kappa^+\}$.
\end{Df}

\begin{Fact}\label{FactForRegularCardinalKappaAryPartIsLogic}

\begin{enumerate}

\item The $\kappa$-ary part of a relation that is reflexive (resp. monotonous, resp. structural concerning some set of endofunctions) has the same property and always is a $\kappa$-ary relation.

    \item If $\kappa$ is a regular cardinal, then  
the $\kappa$-ary part of a relation that determines a closure operator still determines a closure operator. In particular the $\k$-ary part of a logic is a $\k$-logic. %({\em this does not need to be the case for singular cardinals, as shown in the example constructed in Remark \ref{k-logic}}).
\end{enumerate}
\end{Fact}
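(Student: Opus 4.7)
My plan is to verify each claimed property of $\vdash_\kappa$ directly from its definition, and to isolate the single step where regularity of $\kappa$ is essential.

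For part (1) I would check each of the four properties in turn. Reflexivity: if $\varphi \in \Gamma$, then $\{\varphi\} \in P_{<\kappa}(\Gamma)$ (since $\kappa$ is infinite) and $\{\varphi\} \vdash \varphi$ by reflexivity of $\vdash$, so $\Gamma \vdash_\kappa \varphi$. Monotonicity: any witness $\Gamma' \in P_{<\kappa}(\Gamma)$ for $\Gamma \vdash_\kappa \varphi$ also lies in $P_{<\kappa}(\Delta)$ whenever $\Gamma \subseteq \Delta$. Structurality with respect to an endofunction $\sigma$: from $\Gamma' \vdash \varphi$ with $|\Gamma'| < \kappa$, structurality of $\vdash$ yields $\sigma[\Gamma'] \vdash \sigma(\varphi)$, and $\sigma[\Gamma'] \subseteq \sigma[\Gamma]$ has cardinality $\leq |\Gamma'| < \kappa$, so $\sigma[\Gamma] \vdash_\kappa \sigma(\varphi)$. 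That $\vdash_\kappa$ is itself $\kappa$-ary is essentially built into the definition: any witnessing $\Gamma'$ for $\Gamma \vdash_\kappa \varphi$ automatically satisfies $\Gamma' \vdash_\kappa \varphi$. None of these four arguments uses regularity.

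For part (2), combined with part (1), it suffices to verify the cut rule (equivalently, idempotence of the associated closure operator) for $\vdash_\kappa$. Suppose $\Gamma \vdash_\kappa \varphi$ and $\Delta \vdash_\kappa \psi$ for every $\psi \in \Gamma$. Choose a witness $\Gamma' \in P_{<\kappa}(\Gamma)$ with $\Gamma' \vdash \varphi$, and for each $\psi \in \Gamma'$ choose $\Delta_\psi \in P_{<\kappa}(\Delta)$ with $\Delta_\psi \vdash \psi$. Setting $\Delta' := \bigcup_{\psi \in \Gamma'} \Delta_\psi \subseteq \Delta$, regularity of $\kappa$ is precisely what guarantees $|\Delta'| < \kappa$, so $\Delta' \in P_{<\kappa}(\Delta)$. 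Monotonicity of $\vdash$ then gives $\Delta' \vdash \psi$ for every $\psi \in \Gamma'$, and cut for $\vdash$ yields $\Delta' \vdash \varphi$, hence $\Delta \vdash_\kappa \varphi$. The ``in particular'' assertion follows immediately: for a logic $\vdash$, the relation $\vdash_\kappa$ inherits reflexivity, monotonicity, structurality and cut, and is $\kappa$-ary by part (1), so it is a $\kappa$-logic.

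The only real obstacle is the union bound in the cut step: regularity of $\kappa$ is used in an essential way to keep $|\Delta'| < \kappa$, and without it the argument genuinely breaks down, which is consistent with the discussion anticipated in Remark \ref{ObsPorqueTratamosDoCasoRegular}.
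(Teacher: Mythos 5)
Your proof is correct and follows essentially the same route as the paper: part (1) by direct verification from the definition of $\vdash_\kappa$, and part (2) by reducing to the cut rule, taking a small witness for the conclusion, choosing witnesses $\Delta_\psi \in P_{<\kappa}(\Delta)$ for each of its fewer-than-$\kappa$ premises, and invoking regularity exactly once to bound the cardinality of their union before applying monotonicity and cut for $\vdash$. Your explicit identification of that union bound as the sole use of regularity matches the paper's emphasis (and its Example \ref{ExampleKappaAryPartCanFailToBeLogic}).
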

%{\color{blue} $\uparrow$ Ficou legal assim, com estas duas partes!}

\begin{prooof} (1) The inflationary, increasing and structural properties are easy to see. The $\k$-ary property: $\Gamma \vdash_\k \varphi$, then there is $\Gamma' \in P_{<\k}(\Gamma)$ such that $\Gamma' \vdash \varphi$. Then $\Gamma' \vdash_\k \varphi$.

\vspace{0.2cm}
(2) idempotency or cut:\\ Suppose that $\Delta \vdash_\k \phi$ and $\Gamma \vdash_\k \Delta$.
Then let $\Delta' \in P_{<\k}(\Delta)$ such that $\Delta' \vdash \varphi$ and for each $\delta \in \Delta'$ let $\Gamma_{\delta} \in P_{<\k}(\Gamma)$ such that $\Gamma_\delta \vdash \delta$. Since $\k$ is {\bf regular}, take $\Gamma' := \bigcup\{ \Gamma_{\delta} : \delta \in \Delta'\}$, then $\Gamma' \in P_{<\kappa}(\Gamma)$ and $\Gamma' \vdash \Delta'$. Thus $\Gamma' \vdash \varphi$ and $\Gamma \vdash_\k \varphi$.

$\ $
\end{prooof}

By Fact \ref{FactForRegularCardinalKappaAryPartIsLogic}, if $\kappa$ is a regular cardinal, then the $\kappa$-ary part of the consequence relation of a logic is a logic again. For a singular cardinal $\kappa$ this can fail:

\begin{Ex}\label{ExampleKappaAryPartCanFailToBeLogic}
Let $\kappa$ be a singular cardinal, and let $M_i, \ i\in I$ be a family of pairwise disjoint sets with $|I|< \kappa$, $|M_i|<\kappa$ for all $i\in I$ and $|\bigcup_{i \in I}M_i|=\kappa$.

Consider the signature with $\Sigma_0:=\bigcup_{i \in I}M_i \coprod I \coprod \{\ast\}$ and $\Sigma_n=\emptyset$ for $n\neq 0$. For an enumerable set $X$ of variables, consider the consequence relation ${\vdash}$ on $\Fm{\Sigma}{X}$ generated by the rules 
$$M_i \vdash i \ (i \in I) \ \text{ and }\ I\vdash\ast.$$
By idempotence, this consequence relation satisfies $\bigcup_{i \in I}M_i \vdash \ast$, but no proper subset allows this conclusion. Therefore it has cardinality $>\kappa$. In fact it has cardinality $=\kappa^+$, the successor of $\kappa$, because that is the cardinality of the language.

The $\kappa$-ary part ${\vdash}_\kappa$ of ${\vdash}$ contains all of our generating rules, but not the rule $\bigcup_{i \in I}M_i \vdash \ast$, so it fails to satisfy idempotence and is not a logic.
\end{Ex}

\begin{Obs}
The different behaviour of regular and singular cardinals with respect to $\kappa$-ary parts, and also when taking closures, leads to various regularity assumptions in our results, but also for example in the construction of natural extensions of logics -- see Remark \ref{ObsPorqueTratamosDoCasoRegular} for the latter point.
\end{Obs}

Last, we will recall some notions and results on general closure operators.

\begin{Obs}\label{closure-intersection} {\bf On  general closure operators and complete lattices:} Recall that, for every set $X$:
\begin{itemize}
    
\item A subset $I \subseteq P(X)$ is an {\bf intersection family} iff it is closed under arbitrary intersections (with the convention that empty intersection = $X$). This is the same as the complete lattices $(I, \leq)$ such that the inclusion $\iota : I \hookrightarrow P(X)$ preserves arbitrary infima. We denote by $({\cal I}(X), \subseteq)$ the poset of all intersection families in $X$, ordered by inclusion.

\item It is a well-known result that the mappings below are well defined and  provide a natural anti-isomorphism between the posets $({\cal I}(X), \subseteq)$ and  $({\cal C}(X), \leq)$:

$$I \in {\cal I}(X) \mapsto c_I : P(X) \to P(X), c_I(A) = \bigcap\{C \in I: A \subseteq C\}$$

$$c \in {\cal C}(X) \mapsto I_c = \{ A \in P(X) :  c(A) = A\}.$$

The key points to establish these are:  $c(A)$ is the least $I_c$-closed above $A$  and $$c(\bigcap_{i \in I} A_i) \subseteq \bigcap_{i \in I} c(A_i).$$

\end{itemize}

\end{Obs}

\begin{Obs} Given a regular cardinal $\kappa$, the above correspondence restricts to $\kappa$-ary closure operators (Notation: ${\cal C}_{\kappa}(X)$) and  the $\kappa$-presentable lattices $(I, \leq)$ such that the inclusion $\iota : I \hookrightarrow P(X)$ preserves arbitrary infima and  $\kappa$-directed unions (Notation: ${\cal I}_\kappa(X)$):
The key point to show this is that  $c(\bigcup_{i \in I} A_i) = \bigcup_{i \in I} c(A_i)$ for every $\kappa$-directed union (not only  $c(A) = \bigcup_{A' \in P_{<\kappa}(A)} c(A')$, as in definition). The $\kappa$-compact elements of $I_c$ are exactly the $c(A)$, for each $A \in P_{<\kappa}(X)$. Note that $\{ c(A') : A' \in P_{<\kappa}(A)\}$ is a $\kappa$-directed family of closed subsets, thus $x \notin \bigcup_{A' \in P_{<\kappa}(A)} c_I(A')$ entails $x \notin  c_I(A)$.

\end{Obs}

We present below the explicit calculation of the infima and the relevant suprema  in the posets $({\mathcal C}_\kappa(X), \subseteq)$ that will be useful in Section 4.

\begin{Fact} \label{supinfclosure-fact}

{\underline{Calculation of non-empty infimum}} of  a non-empty family in  $\{c_t : t \in T\} \subseteq {\cal C}_{\kappa}(X)$:

- if $A \in P_{<  \kappa}(X)$, $c(A) := \bigcap_{t \in T} c_t(A)$;

- if $B \in P(X)$, $c(B) := \bigcup_{B' \in P_{<\kappa}(B)} c(B')$.

The key point here (to show idempotence) is realize that if $A \in P_{<  \kappa}(X)$ and $D \in P_{<\kappa}(c(A))$ then, for all $t \in T$, $c_t(D) \subseteq c_t(A)$. 

{\underline{top = inf of the empty family}} in ${\cal C}_{\kappa}(X)$:

$c_\top(A) = X, \forall A \in P(X)$, $I_\top = \{X\}$

{\underline{bottom = sup of the empty family}} in ${\cal C}_{\kappa}(X)$:

$c_\bot(A) = A, \forall A \in P(X)$, $I_\bot = P(X)$

{\underline{Calculation of a $\kappa$-directed sup}} of a upward $\kappa$-directed family in  $\{c_i : i \in (I, \leq)\} \subseteq {\cal C}_{\kappa}(X)$:

- if $A \in P_{<  \kappa}(X)$, $c(A) := \bigcup_{i \in I} c_i(A)$;

- if $B \in P(X)$, $c(B) := \bigcup_{B' \in P_{<\kappa}(B)} c(B')$.

%Again, the key point here is to show (the non-trivial inclusion in) idempotence:
%if $x \in c(c(B))$ then there are  $A \in P_{< \kappa}(c(B))$ and $i \in I$ such that $x \in c_i(A)$ and since $\sharp A < \kappa$ and  $(P_{<\kappa}(c(B)), \subseteq),  (I,\leq)$ are upward directed, then there are $B' \in P_{< \kappa}(B)$ and $j \in I$ such that $A \subseteq c_j(B')$, thus there is $k \geq i,j$ such that $x \in c_k(A) \subseteq c_k(B')$, i.e. $x \in c(B)$.

%{\color{red}{

%{\underline{Calculation of non-empty sup}} of a  non-empty family in $\{c_t : t \in T\} \subseteq {\cal C}_{\kappa}(X)$ (that corresponds to a non-empty inf in ${\cal I}_\kappa(X)$): 

%COMO NA DEFINICAO DE PROVA DE COMPRIMENTO $< \kappa$, $
%x \in c(A)$ iff there is a sucessor ordinal $(\alpha +1) < \kappa$ and $f : (\alpha +1) \to X$ a $(\alpha+1)$-proof sequence  with hypothesis $A$ and rules in $c_t(A'), t \in T$, $A' \in P_{<\kappa}(A)$ such that $f(\alpha) = x$. 

%Depois vou completar... 

%MELHOR  OMITIR ISTO POIS NAO VAMOS USAR

%}}
  \end{Fact}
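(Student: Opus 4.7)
The plan is to verify each of the four formulas by first checking that the defined operator $c$ is a $\kappa$-ary closure operator on $X$, and then verifying the universal property in $\mathcal{C}_\kappa(X)$. In every case $c$ is defined in two stages: first an explicit formula on the small subsets $A \in P_{<\kappa}(X)$, then the extension to arbitrary $B \in P(X)$ via $c(B) := \bigcup_{B' \in P_{<\kappa}(B)} c(B')$. This two-stage shape automatically makes $c$ inflationary, monotone and $\kappa$-ary, so the substantive work is idempotence on small subsets; the rest of idempotence then follows because $\kappa$-directed unions commute with each $c_t$ or $c_i$ (cf.\ the preceding Remark).

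For the non-empty infimum, I would fix $A \in P_{<\kappa}(X)$ and $D \in P_{<\kappa}(c(A))$ and show $c(D) \subseteq c(A)$. Since $D \subseteq c(A) \subseteq c_t(A)$ for every $t \in T$, monotonicity and idempotence of each $c_t$ give $c_t(D) \subseteq c_t(c_t(A)) = c_t(A)$, hence $c(D) = \bigcap_t c_t(D) \subseteq \bigcap_t c_t(A) = c(A)$. The universal property is then immediate on small sets: by construction $c(A) \subseteq c_t(A)$ for every $t$, so $c \leq c_t$; and any $c' \leq c_t$ for all $t$ satisfies $c'(A) \subseteq \bigcap_t c_t(A) = c(A)$. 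Both statements propagate to all of $P(X)$ using that $c,c',c_t$ are $\kappa$-ary.

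The top and bottom cases are direct: one checks that $c_\top$ and $c_\bot$ are $\kappa$-ary closure operators and that the inequalities $c_\bot \leq c \leq c_\top$ hold for any $c \in \mathcal{C}_\kappa(X)$. For the $\kappa$-directed supremum of $\{c_i : i \in (I,\leq)\}$, the crucial step is again idempotence on small sets. Given $A \in P_{<\kappa}(X)$ and $D \in P_{<\kappa}\bigl(\bigcup_i c_i(A)\bigr)$, each $d \in D$ lies in some $c_{i_d}(A)$; since $|D| < \kappa$ and $I$ is $\kappa$-directed, the family $\{i_d : d \in D\}$ admits an upper bound $i^* \in I$, and by monotonicity of the family one gets $D \subseteq c_{i^*}(A)$. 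For any $j \in I$, choose $i^{**} \in I$ above both $j$ and $i^*$; then $c_j(D) \subseteq c_{i^{**}}(D) \subseteq c_{i^{**}}(c_{i^{**}}(A)) = c_{i^{**}}(A) \subseteq c(A)$, and taking unions over $j$ yields $c(D) \subseteq c(A)$. Universality follows: $c_i \leq c$ is clear on small $A$, while any upper bound $c'$ satisfies $c'(A) \supseteq c_i(A)$ for every $i$, hence $c'(A) \supseteq \bigcup_i c_i(A) = c(A)$ on small $A$, and both extend to all of $P(X)$ via the $\kappa$-ary property.

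The main obstacle is the idempotence argument for the $\kappa$-directed supremum: this is the one place where regularity of $\kappa$ (needed to gather the $|D| < \kappa$ many indices $i_d$ into a subset of $I$ of size $< \kappa$) is used together with the $\kappa$-directedness of the family (to produce the bound $i^*$). All other items reduce quickly to the two-stage definition and to the corresponding properties of the input operators $c_t$ or $c_i$.
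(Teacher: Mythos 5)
Your proposal is correct and follows essentially the same route as the paper: the two-stage ($<\kappa$ then $\kappa$-ary extension) definition handles inflation, monotonicity and $\kappa$-arity automatically, and the only substantive point is idempotence on small sets, which for the infimum you verify via exactly the paper's key observation ($D \in P_{<\kappa}(c(A))$ implies $c_t(D) \subseteq c_t(A)$ for all $t$), and for the $\kappa$-directed supremum via the regularity-plus-directedness argument the paper leaves implicit. Your write-up merely spells out the universal-property checks and the propagation to arbitrary subsets, which the paper omits as routine.
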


\begin{Fact} \label{k-closure-fa}
Under the notation and hypothesis above, the poset inclusion \linebreak $({\cal C}_\kappa(X), \subseteq)$ $\hookrightarrow ({\cal C}(X), \subseteq)$ has a right adjoint. I.e.: let $c$ be a closure operator. Define $c^{(\kappa)}(A) = \bigcup_{A' \in P_{<\kappa}(A)} c(A')$, $A \in P(X)$. Then $c^{(k)} \in  {\cal C}_\kappa(X)$, $c^{(\kappa)} \leq c$ and, for each $c'  \in  {\cal C}_\kappa(X)$ such that $c' \leq c$, we have $c' \leq c^{(\kappa)}$.

 The only  non-trivial part of the verification is to show that $c^{(\kappa)}$ is an idempotent operator: this follows in the same vein of the construction of $\kappa$-directed suprema in the poset $({\mathcal C}_\kappa(X), \subseteq)$ that we  have described above.

\medskip

\end{Fact}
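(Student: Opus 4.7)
The plan is to verify, in order, the four assertions: (a) $c^{(\kappa)}$ is a closure operator, (b) $c^{(\kappa)}$ is $\kappa$-ary, (c) $c^{(\kappa)} \leq c$, and (d) maximality among $\kappa$-ary closure operators below $c$. Items (c) and (d) are essentially formal, so most of the work is in (a).

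For (a), inflation follows by taking $A' = \emptyset$ (or any $A' \ni x$ when $x \in A$, using that $\{x\} \in P_{<\kappa}(A)$) and using inflation of $c$; monotonicity is immediate from the definition. The main step is idempotence, for which I would show $c^{(\kappa)}(c^{(\kappa)}(A)) \subseteq c^{(\kappa)}(A)$. Pick $B \in P_{<\kappa}(c^{(\kappa)}(A))$. For each $b\in B$, by definition there exists $A'_b \in P_{<\kappa}(A)$ with $b \in c(A'_b)$. Set $A' := \bigcup_{b\in B} A'_b$. Since $\kappa$ is \emph{regular} and $|B|<\kappa$ and each $|A'_b|<\kappa$, we have $A' \in P_{<\kappa}(A)$. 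Then $B \subseteq c(A')$, so by monotonicity and idempotence of $c$, $c(B) \subseteq c(c(A')) = c(A') \subseteq c^{(\kappa)}(A)$. This is precisely the place where the regularity of $\kappa$ is needed, and it is the only real obstacle in the whole argument.

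For (b), one inclusion in $c^{(\kappa)}(A) = \bigcup_{A' \in P_{<\kappa}(A)} c^{(\kappa)}(A')$ is trivial (each summand $c^{(\kappa)}(A')$ contains $c(A')$, whose union is $c^{(\kappa)}(A)$), and the other follows from monotonicity of $c^{(\kappa)}$ already established. For (c), note that for every $A' \in P_{<\kappa}(A)$ we have $c(A') \subseteq c(A)$ by monotonicity of $c$, so $c^{(\kappa)}(A)\subseteq c(A)$.

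For (d), suppose $c' \in \mathcal{C}_\kappa(X)$ satisfies $c'\leq c$. Then $c'(A') \subseteq c(A')$ for every $A' \in P_{<\kappa}(A)$, and since $c'$ is $\kappa$-ary we have $c'(A) = \bigcup_{A' \in P_{<\kappa}(A)} c'(A') \subseteq \bigcup_{A' \in P_{<\kappa}(A)} c(A') = c^{(\kappa)}(A)$. This establishes the universal property of the right adjoint, since the adjunction $(\mathcal{C}_\kappa(X) \hookrightarrow \mathcal{C}(X)) \dashv (-)^{(\kappa)}$ is expressed precisely by: for $c' \in \mathcal{C}_\kappa(X)$ and $c \in \mathcal{C}(X)$, $c' \leq c$ iff $c' \leq c^{(\kappa)}$ (the reverse implication follows from (c) by transitivity).
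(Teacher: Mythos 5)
Your proposal is correct and takes essentially the same route the paper indicates: the only substantive point is idempotence, and you handle it exactly as the paper's sketch suggests (mirroring the $\kappa$-directed-sup construction), by using regularity of $\kappa$ to unite the fewer than $\kappa$ many witness sets $A'_b$ into a single $A' \in P_{<\kappa}(A)$. One tiny slip: taking $A'=\emptyset$ does not by itself give inflation, but your parenthetical argument with $A'=\{x\}$ for each $x\in A$ is the correct one, so nothing is missing.
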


%%%%%%%%%%%%%%%%%%%%
%%%%%%%%%%%%%%%%%%%%
%{\color{red}{Vamos deixar o titulo da secao com ou sem kappa?}

\section{Filter pairs}\label{SectionFilterPairs}

A (not necessarily structural) logic on $\Fm{\Sigma}{X}$ is the same thing as a closure operator $C$ on a formula algebra $\Fm{\Sigma}{X}$. 
On the other hand, closure operators correspond to intersection families (Remark \ref{closure-intersection}). For instance, a logic $l$ is determined by its  (complete lattice of)  theories, $Th(l)$, and the inclusion of this intersection family $i : Th(l) \hookrightarrow \wp(\Fm{\Sigma}{X})$ gives a "canonical presentation" of the  logic $l = (\Fm{\Sigma}{X}, {\vdash})$.

 More generally,  logics on $\Fm{\Sigma}{X}$ can be obtained from  certain morphisms $i \colon L \to \wp(\Fm{\Sigma}{X})$ from a complete lattice $L$ which preserves arbitrary infima.

The basic idea of filter pairs is to study logics on $\wp(\Fm{\Sigma}{X})$ by presenting their closure operators through convenient morphisms of complete lattices $i \colon L \to \wp(\Fm{\Sigma}{X})$, as above.

Of course, a presentation of this kind is more useful if the lattice $L$ in question is not already given as a lattice of sets of formulas, but for example is a lattice of congruences -- this is the case for the so-called ``congruence filter pairs", defined below.

The next natural question is, what structure or properties to impose on the function $i \colon L \to \wp(\Fm{\Sigma}{X})$ to ensure a given property of the associated logic.

It is not difficult to establish the following characterizations (see, for instance, \cite{AMP}):

\begin{Fact} 

Let $l= (\Fm{\Sigma}{X}, {\vdash})$ be a logic.

\begin{itemize}
    
    \item $l$ is {\bf structural} iff the diagram
$$\xymatrix{
\Fm{\Sigma}{X} \ar[d]_\sigma & Th(l) \ar@{^(->}[r]^-i & \wp(\Fm{\Sigma}{X}) \\
\Fm{\Sigma}{X} & Th(l) \ar[u]^{\sigma^{-1}|_{_{Th}}} \ar@{^(->}[r]^-i & \wp(\Fm{\Sigma}{X}) \ar[u]_{\sigma^{-1}}
}$$
commutes for every $\Sigma$-endomorphism $\sigma : \Fm{\Sigma}{X} \to \Fm{\Sigma}{X}$.
    
    \item $l$ is {\bf finitary} iff  $i: Th(l) \hookrightarrow \wp(\Fm{\Sigma}{X})$ preserves directed suprema.
    
    %\item If $\k$ is a regular cardinal, $l$ is {\bf $\k$-ary} iff  $i: Th(l) \hookrightarrow \wp(\Fm{\Sigma}{X})$ preserves $\kappa$-directed suprema.
    
\end{itemize}

\end{Fact}

\medskip

Thus structurality of the logic means that the preimage under a substitution of a theory is a theory again. % or, equivalently, that for every substitution $\sigma$ the following diagram commutes:
%$$\xymatrix{
%\Fm{\Sigma}{X} \ar[d]_\sigma & \Th \ar@{^(->}[r]^-i & \wp(\Fm{\Sigma}{X}) \\
%\Fm{\Sigma}{X} & \Th \ar[u]^{\sigma^{-1}|_{_{Th}}} \ar@{^(->}[r]^-i & \wp(\Fm{\Sigma}{X}) \ar[u]_{\sigma^{-1}}
%}$$

Consider the category $\Cc$ consisting of the single  $\Sigma$-algebra $\Fm{\Sigma}{X}$ and all its $\Sigma$-endomorphisms. We have two functors $[\Th], [\wp]\colon \Cc \to \CLat$, into the category of complete lattices and order preserving maps, which assign to the single object $\Fm{\Sigma}{X}$ the lattices $\Th$ and $\wp(\Fm{\Sigma}{X})$, respectively, and to an endomorphism the preimage map $\sigma^{-1}$. Structurality can then be rephrased as saying that the map $i$ is a \emph{natural transformation} from $[\Th]$ to $[\wp]$.

\medskip

These considerations motivate the following definition.

\begin{Df}\label{DefCFilterPair} (adapted from \cite{AMP})
 Let $\Sigma$ be a signature and $\Cc \subseteq \SigmaStr$ a subcategory of the category of $\Sigma$-structures. A $\Cc$-filter pair is a pair $(G,i)$, where $G \colon \Cc^{op} \to \CLat$ is a functor from $\Cc$ to the category of complete lattices and order preserving maps and $i=(i_A \colon G(A) \to \wp(A))_{A \in \Cc}$ is a collection of maps with the following properties:
\begin{enumerate}
 \item For every $A \in \Cc$ the map $i_A$ preserves arbitrary infima.
 
 \item The collection $i$ is a natural transformation from $G$ to the functor $\wp$, sending an algebra to its power set and a homomorphism $\sigma$ to the preimage function $\sigma^{-1}$, i.e. for every $\sigma \colon A \to B \in \Cc$ the following diagram commutes:
  \[
\xymatrix{
A\ar[d]_{\sigma}&G(A)\ar[r]^-{i_{A}}&\wp(A)\\
B&G(B)\ar[u]^{G(\sigma)}\ar[r]_-{i_{B}}&\wp(B)\ar[u]_{\sigma^{-1}}
}
\]
\end{enumerate}
In the case $\Cc = \SigmaStr$ a $\Cc$-filter pair is simply called \emph{filter pair}.
\end{Df}

We observe that the requirement that $i_A \colon G(A) \to \wp(A)$ preserves arbitrary infima implies that it preserves order (since $x \leq y \Rightarrow x = x \wedge y \Rightarrow i_A(x) = i_A(x) \wedge i_A(y) \Rightarrow i_A(x) \leq i_A(y)$) and that it has a left adjoint $\Xi_A \colon \wp(A) \to G(A)$ ($S \in \wp(A) \mapsto \bigwedge\{x \in  G(A) : S \subseteq i_A(x)\}$). It follows that $i_A \circ \Xi_A$ is a closure operator on $A$.

\begin{Df}\label{DefAssociatedLogic}
 Let ${\mathcal F} = (G,i)$ be a $\Cc$-filter pair and let $A$ be an object of $\Cc$. The abstract logic over $A$ associated to ${\mathcal F}$ is the pair $\Log{A}({\mathcal F}) := (A, {\vdash}^{\mathcal F}_A)$, where ${\vdash}^{\mathcal F}_A$ is the consequence relation associated to the closure operator $i_A \circ \Xi_A$.
 
\end{Df}

Moreover, each morphism $\sigma \colon A \to B$ in $\Cc$ induces a translation of abstract logics $\sigma \colon (A, {\vdash}^{\mathcal F}_A) \to (B, {\vdash}^{\mathcal F}_B)$.
We will provide more details on this subject in the next section.

\medskip

The passage from our motivating example of the theory lattice of a logic to Definition \ref{DefCFilterPair} may leave the reader with the following two questions:

\begin{enumerate}
    \item Why consider some subcategory $\Cc$ of all $\Sigma$-structures, and not just the formula algebra ${\Fm{\Sigma}{X}}$?

    \item Why not demand $i$ to be a collection of inclusions? After all the logic \linebreak $\Log{\Fm{\Sigma}{X}}({\mathcal F})$ of Definition \ref{DefinitionLogX} only depends on the image of $i_{\Fm{\Sigma}{X}}$, the lattice of theories.  Indeed, Theorem \ref{TheoremEveryLogicComesFromAFilterPair} below shows that, for presenting a given logic, a filter pair for which $i$ is a collection of inclusions can always be arranged.
\end{enumerate}

%\begin{enumerate}
 %   \item Why not demand $i$ to be a collection of inclusions? After all the logic $\Log{\Fm{\Sigma}{X}}({\mathcal F})$ of Definition \ref{DefinitionLogX} only depends on the image of $i_{\Fm{\Sigma}{X}}$, the lattice of theories.  Theorem \ref{TheoremEveryLogicComesFromAFilterPair} shows that, for presenting a given logic, a filter pair for which $i$ is a collection of inclusions can always be arranged.
    
 %   \item Why consider all the $\Sigma$-structures, and not just the formula algebra $\Fm{\Sigma}{X}$?
%\end{enumerate}

To illustrate the role of the subcategory $\Cc$, we list a few examples. To separate the two above issues, in all but the last of these examples we suppose the collection $i$ to consist of injective maps -- in this case we speak of a \emph{mono $\Cc$-filter pair}.

\begin{Ex}\label{ExamplesOfGeneralCFilterPairs}
\begin{enumerate}
 \item In case $\Cc$ is the full subcategory of $\SigmaStr$ consisting just of the single object $\Fm{\Sigma}{X}$ and all its endomorphisms, we are back in the situation of the beginning of this section. In this case the datum of a mono $\Cc$-filter pair is equivalent to the datum of a structural logic.
 
 \item In case $\Cc$ is a, possibly non-full, subcategory of $\SigmaStr$ consisting of the object $\Fm{\Sigma}{X}$ and a sub-monoid of the monoid of all its endomorphisms, the datum of a mono $\Cc$-filter pair is equivalent to the datum of a logic that is structural with respect to just the morphisms in $\Cc$.
 
 \item As a special case of the previous example, take $\Cc$ to be the category consisting of the object $\Fm{\Sigma}{X}$ and all endomorphisms which are induced by maps $X \to X$. Then the datum of a mono $\Cc$-filter pair can be understood as the datum of a logic, which is not necessarily fully structural, but for which all variables behave interchangeably.
 
 \item In case $\Cc$ is the category of all free $\Sigma$-structures over infinitely many generators and all homomorphisms between them, we obtain the notion of \emph{free mono filter pair}. By Theorem  \ref{TheoremFilterPairsGiveConservativeExtensions} below, the datum of a free mono filter pair is equivalent to the datum of a structural logic $(\Fm{\Sigma}{X}, {\vdash})$ together with a collection of conservative extensions to the algebras $\Fm{\Sigma}{Y}$ with $Y$ of bigger cardinality than $X$.
  
 \item Continuing the previous example, the datum of a free mono filter pair $(G,i)$ that has the extra property of being a $\kappa$-filter pair - a notion that will  be introduced in the next section - {\em and} that the associated logic $l:=\Log{\Fm{\Sigma}{X}}(G,i)$ (for $X$ an infinite set of variables) has cardinality $\kappa$, is equivalent to the datum of a natural extension of $l$ to a free algebra $\Fm{\Sigma}{Z}$, with a set $Z$ of variables of cardinality $\kappa$ -- this follows from Theorem \ref{TeoLatticeIsoBetweenFreeMonoFilterPairsAndNatExts} below.
 
 \item From a $\Cc$-filter pair $(G,i)$ in the case $\Cc = \SigmaStr$, one obtains an abstract logic over every algebra $A \in \SigmaStr$. As could already be seen in the previous two examples, these abstract logics are not independent from each other, but are tied together by the naturality requirement. By the proof of \cite[{Prop. 2.9}]{AMP} the closed sets (or theories) for these abstract logics are filters for the logic $L:=\Log{\Fm{\Sigma}{X}}(G,i)$ ($X$ any set). Thus the datum of a $\SigmaStr$-filter pair provides a logic, together with a collection of conservative extensions and a structure of generalized matrix on every algebra $A$.
 
 \item A concrete example of a $\Cc$-filter pair with $\Cc = \SigmaStr$ is the so-called canonical filter pair of a logic $l:=(\Fm{\Sigma}{X}, {\vdash})$: This is the filter pair $(Fi_l, i)$ in which $Fi_l(A)$ is the set of $l$-filters on $A$, made into a functor by inverse image, and $i$ is the inclusion in to the power set. By Theorem \ref{TheoremEveryLogicComesFromAFilterPair} below, one has $\Log{\Fm{\Sigma}{X}}(\mathcal{F}(l)) = l$, so that every logic admits a presentation by such a filter pair.
 \end{enumerate}
\end{Ex}

%The last one of these examples may give the feeling that asking for the datum of a whole filter pair (i.e. the case $\Cc = \SigmaStr$) is a bit of an overkill just for presenting a logic. It may help to notice, however, that every logic is the logic associated to a filter pair: given a logic $l:=(\Fm{\Sigma}{X}, {\vdash})$ one can form its canonical filter pair $\mathcal{F}(l):=(Fi_l, i)$, in which $Fi_l(A)$ is the set of $l$-filters on $A$, made into a functor by inverse image, and $i$ is the inclusion in to the power set. By Theorem \ref{TheoremEveryLogicComesFromAFilterPair} below one has $\Log{\Fm{\Sigma}{X}}(\mathcal{F}(l)) = l$.

%Further reasonable examples of filter pairs are given by the $\mathcal{K}$-filter pairs of Section \ref{SectionFilterPairsFromBinaryFormulas} and by the equational filter pairs we introduce next.

%}}

\medskip

We now justify that we don't demand the maps constituting $i$ to be injective, although the point of view of a presentation of a logic just refers to the image of these maps.
More fundamentally, the basic discussions of this section do not {\em yet} provide a justification for the introduction of filter pairs. The answer to both concerns is that the utility of filter pairs ultimately lies in semantic considerations. To illustrate this, we first sketch how some parts of abstract algebraic logic can be rephrased in terms of filter pairs.

%{\color{red}{ 

\begin{Ex}\label{ExampleCongruenceFilterPairs}
Let $\mathbf{K}$ be a quasivariety of $\Sigma$-structures.  There is a contravariant functor $Co_{\mathbf{K}}$, associating to a $\Sigma$-structure $A$ the lattice of congruences whose quotients lie in $\mathbf{K}$. A filter pair of the form $(Co_{\mathbf{K}}, i)$ is called a congruence filter pair. A canonical recipe to obtain the natural maps $i_A$ is to take a set of equations in one variable $x$: $\tau(x) = \{\langle \delta_i(x), \epsilon_i(x) \rangle: i \in I\}$, and associate to a congruence the elements which solve that equation in the corresponding quotient. More precisely: 

$$\begin{array}{rrcl}
i^\tau_A\colon & Co_{\mathbf{K}}(A) & \to & \wp(A) \\
&                   \theta & \mapsto &  \{a\in A\, \mid\, \forall i \in I \ \langle \delta_i(a), \epsilon_i(a) \rangle \in \theta\}. 
\end{array}
$$

It can be shown that \emph{every} congruence filter pair is of this form, up to isomorphism. In \cite{AMP2} we show:

%\begin{center}
%%{\bf Theorem:}
%\begin{enumerate}
%    \item A logic admits an algebraic semantics in a quasivariety $\mathbf{K}$ if and only if it can be presented by a congruence filter pair $(Co_{\mathbf{K}}, i)$.
%    \item A logic admits an \emph{equivalent} algebraic semantics in a quasivariety $\mathbf{K}$ if and only if it can be presented by a congruence filter pair $(Co_{\mathbf{K}}, i)$ where the maps %$i_A$ are \emph{injective}.
%\end{enumerate}
%\end{center}

\begin{center}
%{\bf Theorem:}
\begin{enumerate}
    \item Giving an algebraic semantics of a logic in a quasivariety $\mathbf{K}$ is equivalent to giving a presentation of that logic by a congruence filter pair $(Co_{\mathbf{K}}, i)$.
    \item Giving an \emph{equivalent} algebraic semantics of a logic in a quasivariety $\mathbf{K}$ is equivalent to giving a presentation of that logic by a congruence filter pair $(Co_{\mathbf{K}}, i)$ \emph{where the maps $i_A$ are \emph{injective}}.
\end{enumerate}
\end{center}

As an example, for classical and intuitionistic logics, the single equation \linebreak $\langle \delta_i(x), \epsilon_i(x) \rangle = \langle x, \top \rangle$ generates a filter pair that describes the equivalent algebraic semantics of these Blok-Pigozzi algebraizable logics.

%We emphasize that the maps $i_A : Co_{\mathbf{K}}(A) \to \wp(A)$ {\em are not necessarily injective}. A canonical recipe to obtain these natural maps $i_A$ is considering a composition of maps $  Co_{\mathbf{K}}(A)  \hookrightarrow \wp(A\times A)$ and $t^*_A : \wp (A \times A)  \to \wp(A)$, the complete lattice morphism induced by the component   $t_A : A \to A\times A$ of a natural transformation $t : Id \to Id \times Id$. 

Further properties of filter pairs correspond to other well-known properties of algebraic semantics like regular algebraizability or truth-equationality.

\medskip

Not only can one express standard notions of abstract algebraic logic in the language of filter pairs, one can also prove theorems within this formalism. For example it is a well-known result that for algebraizable logics, the amalgamation property of the quasivariety implies the Craig interpolation property of the logic. In \cite{AMP2} we use the language of congruence filter pairs to prove that the amalgamation property implies Craig interpolation for a class of algebraic semantics encompassing both equivalent algebraic semantics and regular semantics in regular varieties -- the latter case includes also logics that are not even protoalgebraic.

The condition for an algebraic semantics to fall into this class is that the adjoints of $\Xi_A$ of the $i_A$ in the corresponding filter pair also form a natural transformation.
%We will address (and apply) the  notion of "congruence filter pairs" in other works (\cite{AMP2}, \cite{AMP3}, \cite{AMP4}). 
\end{Ex}

The example of congruence filter pairs sheds light on several aspects of the definition of filter pairs:

\begin{enumerate}
    \item The Theorem cited in the above example shows that injectivity of the $i_A$ is a meaningful additional information, and should not be demanded in the axioms of a filter pair. 
    
    \item The condition for the Craig interpolation result, that the family of maps $\Xi_A$ be a natural transformation, points to the usefulness of naturality conditions in logic. Another well-known example is the fact that a protoalgebraic logic is equivalential iff the Leibniz operator is natural. This may serve as a further motivation for the naturality condition in the very definition of filter pair.
    
    \item One can show that if a filter pair gives an equivalent algebraic semantics, the functor $G$ is obtained as the left Kan extension (a universal construction from category theory) of its restriction to the reduced algebras. This gives a new and precise sense in which an algebraizable logic is completely determined by its reduced algebras.
    
    The restriction of the filter pair to reduced algebras is an example of a $\Cc$-filter pair where is $\Cc$ is not all of $\SigmaStr$. This shows that the flexibility of choosing $\Cc$ is also meaningful for semantic considerations.
\end{enumerate}

We can not only express and develop abstract algebraic logic in the language of filter pairs: Other types of filter pairs correspond to other semantics for logics, and one can neatly transfer properly formulated notions and proofs from known cases to new realms:

\begin{Ex}\label{ExampleHornFilterPairs}
Let $\Sigma$ be a first order signature, and $\Tb$ a universal Horn theory. For a $\Tb$-model $A$ define $G^{\Tb}(A)$ to be the set of isomorphism classes of $\Tb$-models receiving a surjective homomorphism from $A$. It becomes a lattice by declaring $B \leq C$ iff there exists a surjective morphism $B \twoheadrightarrow C$ commuting with the given morphisms from $A$. The association $A \mapsto G^{\Tb}(A)$ is part of a contravariant functor $G^{\Tb}$ from $\Tb$-models to lattices. A filter pair whose functor is of the form $G^{\Tb}$ is called \emph{Horn filter pair}.

Just as an algebraic semantics of a logic in a quasivariety is the same thing as a faithful translation of that logic into the equational logic of the quasivariety, a semantics in $\Tb$-models is a faithful translation into the lattice of Horn theories extending $\Tb$, and an equivalent semantics is a translation which has an inverse up to logical equivalence.

In \cite{AMP4} we show :

\begin{center}
%{\bf Theorem:}
\begin{enumerate}
    \item Giving a semantics in $\Tb$-models of a logic is equivalent to giving a presentation of that logic by a Horn filter pair $(G^{\Tb}, i)$.
    \item Giving an  \emph{equivalent}  semantics in $\Tb$-models of a logic is equivalent to giving a presentation of that logic by a Horn filter pair $(G^{\Tb}, i)$, \emph{where the maps $i_A$ are injective}.
\end{enumerate}
\end{center}

Examples: 
\begin{itemize}
    \item If the first order signature $\Sigma$ contains a single unary relation symbol, apart from the function symbols, this formalism captures the generalized matrix semantics that every logic possesses.
    \item  If $\Sigma$ contains a single binary relation symbol $\leq$, apart from the function symbols, and the theory $\Tb$ states that $\leq$ is an order relation, then we obtain Raftery's order algebraic semantics and, in the case where $i$ is injective, the order algebraizable logics.
\end{itemize}

The filter pair proof that under a certain technical condition amalgamation implies Craig interpolation -- mentioned for algebraic semantics in Example \ref{ExampleCongruenceFilterPairs} -- carries over to the setting of Horn filter pairs.
\end{Ex}

As highlighted by the parallel examples \ref{ExampleCongruenceFilterPairs} and \ref{ExampleHornFilterPairs}, filter pairs provide a setting in which one can modularize certain elements of formal semantics of logics and substitute them with other choices, while maintaining the validity of some proofs and interrelations between notions.  This is our main interest in filter pairs, and it will be pursued in \cite{AMP2}, \cite{AMP3}, \cite{AMP4}.

Further topics one can explore in this direction, are Avron's non-deterministic semantics \cite{AvronZamansky}, Ellerman's partition logic \cite{Ellerman} and, more generally, semantics in subobject and quotient lattices of locally presentable categories.
See also \cite{AMP} for some further motivation for filter pairs.

\medskip

The present article, however, does not pursue this semantic direction, but is of a foundational nature, exploring how the abstract structure of a filter pair relates to the logic it presents. In the upcoming sections we first explore, how one can recognize whether a filter pair presents a logic of a given cardinality -- this leads to the $\kappa$-filter pairs of the title. Then we return to the basic intuition of the beginning of this section, that a filter pair is a kind of presentation of a logic, and pursue the question what different presentations by filter pairs a logic admits, obtaining an answer in terms of natural extensions of that logic. %For the question to make sense, we need to restrict to mono filter pairs up to isomorphism, otherwise there is always a proper class of such filter pairs. After such a restriction, however, we can identify the different presentations with certain natural extensions of the given logic. In section 5 we therefore insert an interlude on natural extensions, containing some results of independent interest, and then return to filter pairs in section 6.

\section{$\kappa$-Filter pairs}\label{SectionFilterFunctors}

In this section we introduce the notion of $\kappa$-filter pair, discuss some basic properties and show how a $\kappa$-filter pair gives rise to a $\kappa$-logic (see Def. \ref{DefKappaLogic}) and how a logic of cardinality $\kappa$ gives rise to a $reg(\kappa)$-filter pair (where $reg(\kappa)$ denotes the regularization, Def. \ref{DefRegularizationOfCardinal}). %When the cardinal $\kappa$ is regular, the filter pair obtained by a logic of cardinality $\kappa$ gives rise to a $\kappa$-filter pair.

%Recall that a cardinal $\kappa$ is called \emph{regular} if the union of fewer than $\kappa$ sets of cardinality less than $\kappa$ has cardinality less than $\kappa$ again. Let $\kappa$ be a regular cardinal.

{\bf From now on the cardinal $\kappa$ is assumed to be \emph{regular},
unless explicitly mentioned otherwise.}

\begin{Df}\label{DefinitionKappaFilterPair}
Let $\Sigma$ be a signature. A \emph{$\kappa$-filter pair} is a pair $(G,i)$ where \linebreak $G\colon\SigmaStr\,^{op}\to \Lk$ is a contravariant functor from the category of $\Sigma$-structures to the category of $\kappa$-presentable lattices and $i = (i_M)_{M\in\SigmaStr}$ is a collection of order preserving functions $i_{M}:G(M)\to (\wp(M);\subseteq)$ with the following properties:

{\bf 1.} For any $A\in\SigmaStr$, $i_{A}$ preserves arbitrary infima (in particular  $i_{A}(\top)=A$) and $\kappa$-directed suprema.

{\bf 2.} Given a morphism $h:M\to N$ the following diagram commutes:

\[
\xymatrix{
M\ar[d]_{h}&G(M)\ar[r]^-{i_{M}}&\wp(M)\\
N&G(N)\ar[u]^{G(h)}\ar[r]_-{i_{N}}&\wp(N)\ar[u]_{h^{-1}}
}
\]
\end{Df}

\begin{Obs}
 Condition 2. says that $i$ is a natural transformation from $G$ to the functor $\wp\colon \SigmaStr^{op}\to \Lk$ sending a $\Sigma$-structure to the power set of its underlying set and a homomorphism of $\Sigma$-structures to its associated inverse image function.
\end{Obs}

The first class of examples of $\kappa$-filter pairs will be established in 
Theorem \ref{FilterPairsFromLogics}, towards which we will work in items \ref{intersectionOfFilters} to \ref{PreimagesOfFiltersAreFilters}.

But first we explain how one can think of a $\kappa$-filter pair as a presentation of a logic of cardinality $\leq \kappa$. 

\begin{Obs}\label{AlgebraicLatticesArePresentableCats}
 A $\kappa$-presentable lattice is equivalent to a small locally $\kappa$-presentable category. Condition 1. then says that each $i_M$, seen as a functor, is accessible and preserves limits. By \cite[Thm. 1.66]{AR} it has a left adjoint, i.e. it is part of a (covariant) Galois connection. We thus get a closure operator on $\wp(M)$ (corresponding to the unit of the adjunction) and a kernel operator (or coclosure operator) on $G(M)$ (corresponding to the counit). We will prove below that the closure operator on $\Fm{\Sigma}{X}$, the absolutely free $\Sigma$-structure over a set $X$, has cardinality $\leq \kappa$ and is structural and hence gives rise to a $\kappa$-logic. This will be the logic associated to the filter pair. We will now spell this out in less category theoretical terms.

\end{Obs}

Recall that an order preserving function $f \colon P \to Q$ between posets is right adjoint to a function $g \colon Q \to P$ (and $g$ is left adjoint to $f$) if the following relation holds for all $p \in P,\ q \in Q$: $g(q) \leq p \ \Leftrightarrow \ q \leq f(p)$, i.e. if $f$ and $g$ form a (covariant) Galois connection. In this case the composition $f \circ g$ is a closure operator on $Q$ and $g \circ f$ a coclosure operator (or kernel operator) on $P$ and $f$, $g$ restrict to a bijection between the (co)closed elements. The (co)closed elements are exactly those elements in the image of $f$, resp. $g$, since from the adjunction properties it follows that $f \circ g \circ f = f$ and $g \circ f \circ g = g$.

It is easy to see that any  $f \colon P \to Q$ that has a (automatically unique) left adjoint, preserves all the  infima existing in $P$. Moreover:

\begin{Teo}\label{LeftAdjointInFilterPairs}
{\em \cite[Thm. 3.6.9]{TaylorPracticalFoundations}} Let $f \colon P \to Q$ be a function between complete posets that preserves arbitrary infima. Then $f$ has a left adjoint $g \colon Q \to P$, given by $g(q):=inf\{p \in P \, \mid \, q \leq f(p) \}$. 
\end{Teo}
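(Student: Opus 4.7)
The plan is to exhibit $g$ as defined and verify the adjunction relation $g(q)\leq p \Leftrightarrow q \leq f(p)$ directly, making only light use of the infimum-preservation hypothesis.

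First I would note that $g$ is well-defined on all of $Q$: the set $\{p \in P \mid q \leq f(p)\}$ is non-empty because $P$ is complete, hence has a top element $\top_P$, and $f(\top_P)$ is the empty infimum in $Q$, namely $\top_Q \geq q$. Completeness of $P$ then guarantees the infimum exists. Next I would observe that any infimum-preserving map between posets is automatically order preserving, since $p_1 \leq p_2$ is equivalent to $p_1 = \inf\{p_1, p_2\}$, and applying $f$ yields $f(p_1) = \inf\{f(p_1), f(p_2)\}$, i.e.\ $f(p_1) \leq f(p_2)$. This will be used implicitly throughout.

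The core of the argument is the unit inequality $q \leq f(g(q))$. Here the infimum-preservation hypothesis does the real work:
$$f(g(q)) \ = \ f\bigl(\inf\{p \mid q \leq f(p)\}\bigr) \ = \ \inf\{f(p) \mid q \leq f(p)\},$$
and $q$ is by construction a lower bound of the set on the right, so $q \leq f(g(q))$.

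With the unit in hand, the adjunction is a two-line check. For the ($\Leftarrow$) direction, if $q \leq f(p)$ then $p$ lies in the set whose infimum defines $g(q)$, so $g(q) \leq p$. For the ($\Rightarrow$) direction, if $g(q) \leq p$ then by order preservation $f(g(q)) \leq f(p)$, and combining with the unit inequality gives $q \leq f(g(q)) \leq f(p)$. I do not anticipate any real obstacle here; the only subtlety is recognizing that infimum-preservation simultaneously supplies order preservation and the crucial identity $f(\inf S) = \inf f(S)$ needed to establish the unit.
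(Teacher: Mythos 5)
Your proof is correct and is the standard argument for this classical fact (well-definedness of $g$ via completeness, the unit inequality $q \leq f(g(q))$ from infimum preservation, then the two-line equivalence); the paper itself gives no proof, simply citing Taylor, and your argument is essentially the textbook one being cited.
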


Of course, there is a dual result concerning increasing functions that  have a right adjoint or preserve suprema.

%Here is a detailed proof. 
%Copied from Qiaochu Yuan: https://qchu.wordpress.com/2010/10/22/the-adjoint-functor-theorem-for-posets/

% Suppose $g \colon P \to Q$ is an order-preserving function (i.e. a functor) between posets such that $P$ has, and $g$ preserves, infima (i.e. small limits). Does it necessarily have a left adjoint $f \colon Q \to P$? And the answer, by the adjoint functor theorem for posets, is yes.
% 
% The proof itself is trivial. Given $x \in Q$ we want to define $f(x) \in P$ so that $f(x) \le y \Leftrightarrow x \le g(y)$. The condition that $f(x) \le y$ is right-continuous in $y$, so if $f(x) \le y_n$ for a collection $y_n$, then $f(x) \le \inf_n y_n$. It follows that, in fact, $f(x)$ must be equal to $inf \{ y : x \le g(y) \}$. That this choice works follows by the fact that $g$ is right-continuous.
% 
% Now, if $x_1 \le x_2$ then $x_2 \le g(y) \Rightarrow x_1 \le g(y)$, hence $f(x_2) \le y \Rightarrow f(x_1) \le y$. By the Yoneda lemma, it follows that $f(x_1) \le f(x_2)$, hence $f$ is order-preserving. Finally, if $x_i$ is a collection with supremum $x$, then $f(x_i) \le y \Leftrightarrow x_i \le g(y)$, hence $x \le g(y) \Leftrightarrow f(x) \le y$, so again by the Yoneda lemma it follows that $f$ is left-continuous. 
% 

By the theorem above, the maps $i_M$ forming the natural transformation of a $\kappa$-filter pair $(G, i)$ have left adjoints $j_M$ (since they preserve arbitrary infima). From this we have the closure operator $i_M \circ j_M$ on each $\Sigma$-structure $M$. In particular for a set $X$ there is a closure operator on $\Fm{\Sigma}{X}$. This defines a logic:

\begin{Prop}\label{PropLogicsFromFilterPairs}
Let $(G,i)$ be a $\kappa$-filter pair and $X$ be a set. For the $\Sigma$-structure $\Fm{\Sigma}{X}$ let $j_\Fm{\Sigma}{X}$ be the left adjoint to $i_\Fm{\Sigma}{X}$. Then the closure operator $C_G:=i_\Fm{\Sigma}{X} \circ j_\Fm{\Sigma}{X}$ defines a logic of cardinality at most $\kappa$ ($\kappa$-logic) on $\Fm{\Sigma}{X}$.
% Define a relation ${\vdash}_{G} \ \subseteq \wp(\Fm{\Sigma}{X})$ by
% \[\Gamma{\vdash}_{G}\varphi\ \ if\!f\ \ for\ any\ a\in G(F(X)),\ if\ \Gamma\subseteq i_{F(X)}(a)\ then\ \varphi\in i_{F(X)}(a).\]
% Then $l_{G}=(\Sigma,{\vdash}_{G})$ is a logic.
\end{Prop}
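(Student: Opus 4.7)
The plan is to verify the three defining properties of a logic of cardinality at most $\kappa$ for $C_G = i_{\Fm{\Sigma}{X}} \circ j_{\Fm{\Sigma}{X}}$: (i) it is a closure operator, (ii) it is structural, and (iii) its $\kappa$-ary part coincides with itself. For brevity I write $i$ and $j$ for the two maps, dropping the subscript $\Fm{\Sigma}{X}$.

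First, by Theorem \ref{LeftAdjointInFilterPairs} applied to $i$, which preserves arbitrary infima by clause 1 of Definition \ref{DefinitionKappaFilterPair}, a left adjoint $j$ exists and is explicitly given as the infimum formula stated there. That $C_G = i \circ j$ is inflationary, order-preserving, and idempotent is then a standard general fact about (covariant) Galois connections, already spelled out in the remark preceding Theorem \ref{LeftAdjointInFilterPairs}: the unit of the adjunction is a closure operator with image exactly $i[G(\Fm{\Sigma}{X})]$.

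For structurality, I would argue directly on closed sets. A subset $T \subseteq \Fm{\Sigma}{X}$ is $C_G$-closed iff it lies in the image of $i$. Given any substitution $\sigma \colon \Fm{\Sigma}{X} \to \Fm{\Sigma}{X}$ and a closed set $T = i(x)$ for some $x \in G(\Fm{\Sigma}{X})$, the naturality square (clause 2 of Definition \ref{DefinitionKappaFilterPair}) applied to $\sigma$ gives
\[
\sigma^{-1}(T) \;=\; \sigma^{-1}(i(x)) \;=\; i(G(\sigma)(x)),
\]
which is again in the image of $i$ and hence closed. This is exactly the condition that the associated consequence relation is structural.

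The main step, and the one most likely to be the obstacle, is verifying that $C_G$ is $\kappa$-ary, i.e. that $C_G(\Gamma) = \bigcup_{\Gamma' \in P_{<\kappa}(\Gamma)} C_G(\Gamma')$ for every $\Gamma \subseteq \Fm{\Sigma}{X}$. The key observation is that the family $\{\Gamma' : \Gamma' \in P_{<\kappa}(\Gamma)\}$ is $\kappa$-directed in $\wp(\Fm{\Sigma}{X})$: given fewer than $\kappa$ such subsets, their union is still of size $<\kappa$ by regularity of $\kappa$, and is thus an upper bound in the family. Since $j$ is a left adjoint it preserves arbitrary suprema, so in $G(\Fm{\Sigma}{X})$ we have
\[
j(\Gamma) \;=\; j\!\left(\bigcup_{\Gamma' \in P_{<\kappa}(\Gamma)} \Gamma'\right) \;=\; \sup_{\Gamma' \in P_{<\kappa}(\Gamma)} j(\Gamma'),
\]
and since $j$ is monotone, this supremum is $\kappa$-directed. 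Now clause 1 of Definition \ref{DefinitionKappaFilterPair} tells us that $i$ preserves $\kappa$-directed suprema, hence
\[
C_G(\Gamma) \;=\; i(j(\Gamma)) \;=\; \bigcup_{\Gamma' \in P_{<\kappa}(\Gamma)} i(j(\Gamma')) \;=\; \bigcup_{\Gamma' \in P_{<\kappa}(\Gamma)} C_G(\Gamma'),
\]
which is exactly the $\kappa$-ary condition of Definition \ref{DefKappaLogic} (via the Remark following it). Combining (i)--(iii), $C_G$ is the closure operator of a $\kappa$-logic on $\Fm{\Sigma}{X}$, as desired.
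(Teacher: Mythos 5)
Your proof is correct and follows essentially the same route as the paper: $\kappa$-arity is obtained from $j$ preserving arbitrary suprema and $i$ preserving $\kappa$-directed suprema applied to the $\kappa$-directed family $P_{<\kappa}(\Gamma)$, and structurality from the naturality square giving $\sigma^{-1}(i(x))=i(G(\sigma)(x))$. The only cosmetic difference is that you invoke the closed-set characterization of structurality, whereas the paper unwinds that equivalence inline with the specific closed set $\sigma^{-1}(i(j(\sigma(\Gamma))))$.
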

\begin{prooof}
By the axioms of a $\kappa$-filter pair, $i_{\Fm{\Sigma}{X}}$ preserves $\kappa$-directed suprema. Since $j_{\Fm{\Sigma}{X}}$ is a left adjoint it preserves arbitrary suprema. Hence the closure operator $C_{G}:=i_{\Fm{\Sigma}{X}} \circ j_{\Fm{\Sigma}{X}}$ preserves $\kappa$-directed suprema. Since any set $S \in \wp(\Fm{\Sigma}{X})$ is $\kappa$-directed union of its subsets of cardinality smaller than $\kappa$, we have that $C_{G}(S)=\bigcup_{S' \subseteq S, |S'| < \kappa} C_{G}(S')$.

It remains to show structurality. Let $\sigma \in hom(\Fm{\Sigma}{X},\Fm{\Sigma}{X})$ and $\Gamma\cup\{\varphi\}\subseteq \Fm{\Sigma}{X}$ such that $\varphi \in C_{G}(\Gamma)$ (i.e. $\Gamma\vdash_{G}\varphi$ in the associated consequence relation). Then we need to show $\sigma(\varphi) \in C_{G}(\sigma(\Gamma))$.

We have $\sigma(\Gamma) \subseteq C(\sigma(\Gamma))=i(j(\sigma(\Gamma)))$ and therefore $\Gamma \subseteq \sigma^{-1}i(j(\sigma(\Gamma)))$. Since the naturality square 

$$\xymatrix{
\Fm{\Sigma}{X} \ar[d]_\sigma &  G(\Fm{\Sigma}{X}) \ar[r]^-i & \wp(\Fm{\Sigma}{X}) \\
\Fm{\Sigma}{X} & G(\Fm{\Sigma}{X}) \ar[u]^{G(\sigma)} \ar[r]^-i & \wp(\Fm{\Sigma}{X}) \ar[u]_{\sigma^{-1}} 
}$$
commutes, we have $\sigma^{-1}(i(j(\sigma(\Gamma)))) = i(G(\sigma)(j(\sigma(\Gamma))))$, so $\sigma^{-1}(i(j(\sigma(\Gamma))))$ is in the image if $i$ and therefore closed.
%Further we have $C(\sigma^{-1}i(j(\sigma(\Gamma))))=C(i(G(\sigma)(j(\sigma(\Gamma))))) = (i \circ j \circ i)(G(\sigma)(j(\sigma(\Gamma)))) = i(G(\sigma)(j(\sigma(\Gamma)))) = \sigma^{-1}i(j(\sigma(\Gamma)))$ because of the identity $i \circ j \circ i = i$. 
Hence applying the closure operator $C_{G}$ to the inclusion $\Gamma \subseteq \sigma^{-1}i(j(\sigma(\Gamma)))$ yields $\varphi \in C_{G}(\Gamma) \subseteq C_{G}(\sigma^{-1}i(j(\sigma(\Gamma)))) = \sigma^{-1}i(j(\sigma(\Gamma)))$. Now applying $\sigma$ yields $\sigma(\varphi) \in i(j(\sigma(\Gamma))) = C_{G}(\sigma(\Gamma))$.
\end{prooof}

%=i_{\Fm{\Sigma}{X}} \circ j_{\Fm{\Sigma}{X}}(\sigma(\Gamma))

\begin{Df}\label{DefinitionLogX}
For a filter pair ${\cal F}=(G,i)$ and a set $X$ we will denote the logic obtained from Prop. \ref{PropLogicsFromFilterPairs} by $\Log{X}({\cal F})$.
\end{Df}

% The following projection formulas are trivial and possibly useless.
% 
% \begin{Obs}
%  Consider the posets $G(\Fm{\Sigma}{X})$ and $\wp(\Fm{\Sigma}{X})$ as monoidal categories with respect to the binary supremum. Then the left adjoint $j$ is a strict monoidal functor, since it preserves suprema. We can do the same with $i$ and the monoidal structure given by the binary infimum.
% The projection formulas for adjunctions with monoidal functors (see
% https://ncatlab.org/nlab/show/projection+formula )
% then say $S \cup i(a) \subseteq i(j(S) \vee a)$ in $\wp(\Fm{\Sigma}{X})$ and $j(i(a) \cap S) \leq a \wedge j(S)$ in $G(\Fm{\Sigma}{X})$. 
% \end{Obs}

\begin{Obs}
More generally, and with the same proof, for every $\Sigma$-structure $A$ one obtains an \emph{abstract logic} in the sense of \cite{BloomBrownSuszko}, given by the closure operator $i_A \circ j_A$.

A different description of the consequence relation of this abstract logic is \[D \vdash_A a\ \text{iff  for every}\ z\in G(A), \text{if}\ D\subseteq i_{A}(z)\ \text{then}\ a\in i_{A}(z).\]
The proof is the same as that of \cite[Prop. 2.4]{AMP}. %\textcolor{red}{Cite right proposition here, when AMP is finished}]{AMP}.
\end{Obs}

\medskip

We now show that every $\kappa$-logic %{\color{blue} of cardinality $\leq \kappa$}
comes from a  $\kappa$-filter pair (whenever $\kappa$ is a regular cardinal). Let $l = (\Sigma, X, {\vdash})$ be a $\kappa$-logic.

For the following, recall that $\Fi{l}{A}$ denotes the collection of all filters on $A$ (Def. \ref{DefTheoryFilterLatticeOfFilters}).
Lemmas \ref{intersectionOfFilters}, \ref{InclusionOfFiltersPreservesDirectedSuprema} and \ref{PreimagesOfFiltersAreFilters} are well-known but we give short proofs for the convenience of the reader.

\medskip

%Let $l=(\Sigma, {\vdash})$ be a logic defined over the set of variables $X$. Let $A$ be a $\Sigma$-structure. Then a subset $F \subseteq A$ is an $l$-\emph{filter} on $A$ (or equivalently  $(A,F)$ is a \emph{matrix} for $l$) if the following is satisfied: Given $\Gamma \cup \{\varphi\} \subseteq \Fm{\Sigma}{X}$ such that $\Gamma {\vdash}_l \varphi$ and a morphism $v \colon \Fm{\Sigma}{X} \to A$ satisfying $v(\Gamma) \subseteq F$ one has $v(\varphi) \in F$.

%We denote the collection of all filters on $A$ by $\Fi{l}{A}$.

\begin{Lem}\label{intersectionOfFilters}
An arbitrary intersection of filters is a filter again. In particular $\Fi{l}{A}$ is a complete lattice.
\end{Lem}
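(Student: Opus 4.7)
The plan is to verify closure under arbitrary intersections directly from the definition of $l$-filter, and then deduce the complete lattice structure as a standard consequence.

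First I would fix a family $\{F_k\}_{k \in K} \subseteq \Fi{l}{A}$ and set $F := \bigcap_{k \in K} F_k$. To see that $F$ is an $l$-filter, I would pick an arbitrary $\Gamma \cup \{\varphi\} \subseteq \Fm{\Sigma}{X}$ with $\Gamma \vdash \varphi$, and an arbitrary valuation $v \colon \Fm{\Sigma}{X} \to A$ such that $v(\Gamma) \subseteq F$. Since $F \subseteq F_k$ for each $k$, we have $v(\Gamma) \subseteq F_k$, and using that each $F_k$ is an $l$-filter we obtain $v(\varphi) \in F_k$. As this holds for every $k \in K$, we conclude $v(\varphi) \in \bigcap_{k \in K} F_k = F$, so $F$ is indeed an $l$-filter. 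I would also note explicitly that the whole set $A$ is an $l$-filter (the defining condition becomes vacuous), and that under the convention that an empty intersection equals $A$, this handles the edge case $K = \emptyset$.

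For the second assertion, once arbitrary infima (given by intersection) exist in $\Fi{l}{A}$ and a top element exists (namely $A$ itself), standard order-theoretic reasoning gives that $\Fi{l}{A}$ is a complete lattice: for any family $\{F_k\}_{k \in K}$ one defines the supremum by
\[
\bigvee_{k \in K} F_k \;:=\; \bigcap \bigl\{\, F \in \Fi{l}{A} \,\bigm|\, \textstyle\bigcup_{k \in K} F_k \subseteq F \,\bigr\},
\]
which is nonempty because $A$ itself belongs to it, and is an $l$-filter by the first part.

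I do not expect any real obstacle here; the only subtlety worth flagging is the empty-intersection convention, which forces one to check that $A$ itself is a filter so that the lattice has a top element, and thereby that suprema of arbitrary (including empty) families are defined.
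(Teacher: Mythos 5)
Your proposal is correct and follows essentially the same route as the paper: closure under arbitrary intersections is checked directly from the definition of $l$-filter, and completeness of $\Fi{l}{A}$ then follows by the standard fact that a subset of $\wp(A)$ with arbitrary infima (including the empty one, giving the top element $A$) is a complete lattice. Your explicit handling of the empty-intersection case and the formula for suprema are just fuller versions of what the paper leaves implicit.
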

\begin{prooof} That filters are closed under intersection is immediate from the definition. Thus the subset $\Fi{l}{A} \subseteq \wp(A)$ has arbitrary infima and hence is a complete lattice. \end{prooof}

\begin{Lem}\label{InclusionOfFiltersPreservesDirectedSuprema}
The inclusion $i_A \colon \Fi{l}{A} \hookrightarrow \wp(A)$ preserves arbitrary infima and $\kappa$-directed suprema.
\end{Lem}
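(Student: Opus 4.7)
The plan is to handle the two assertions in turn. For infima, I would simply invoke Lemma \ref{intersectionOfFilters}: the intersection of any family of $l$-filters on $A$ is again an $l$-filter, so the infimum of a family in $\Fi{l}{A}$ (the greatest lower bound inside $\Fi{l}{A}$) coincides with the set-theoretic intersection, which is exactly the infimum of the same family in $\wp(A)$. Hence $i_A$ preserves arbitrary infima.

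For $\kappa$-directed suprema, the key is to show that if $(F_t)_{t \in T}$ is a $\kappa$-directed family of $l$-filters on $A$, then the set-theoretic union $\bigcup_{t \in T} F_t$ is itself an $l$-filter; once this is done, the union, being an upper bound in $\Fi{l}{A}$ that is contained in every other upper bound, is also the supremum in $\Fi{l}{A}$, and thus coincides with the supremum computed in $\wp(A)$. To verify the filter property of $\bigcup_{t \in T} F_t$, take any $\Gamma \cup \{\varphi\} \subseteq \Fm{\Sigma}{X}$ with $\Gamma \vdash \varphi$ and any valuation $v \colon \Fm{\Sigma}{X} \to A$ such that $v(\Gamma) \subseteq \bigcup_{t \in T} F_t$. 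Using that $l$ is a $\kappa$-logic, I would pass to a subset $\Gamma' \in P_{<\kappa}(\Gamma)$ with $\Gamma' \vdash \varphi$, so that $v(\Gamma')$ has cardinality strictly less than $\kappa$. For each $\gamma \in \Gamma'$ pick $t_\gamma \in T$ with $v(\gamma) \in F_{t_\gamma}$; by $\kappa$-directedness of $(F_t)_{t \in T}$ (applied to the $<\kappa$-size subfamily $\{F_{t_\gamma} : \gamma \in \Gamma'\}$), there is a single index $s \in T$ with $F_{t_\gamma} \subseteq F_s$ for all $\gamma \in \Gamma'$, so $v(\Gamma') \subseteq F_s$. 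Since $F_s$ is an $l$-filter and $\Gamma' \vdash \varphi$, we get $v(\varphi) \in F_s \subseteq \bigcup_{t \in T} F_t$, which is what was required.

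The main obstacle is making correct combined use of two finiteness-like properties at the appropriate cardinality $\kappa$: the $\kappa$-ary character of $l$ (to cut $\Gamma$ down to size below $\kappa$) and the $\kappa$-directedness of the family (to collect the resulting $<\kappa$ many membership witnesses into a single filter). Both ingredients are indispensable, and neither step is available without the standing regularity assumption on $\kappa$ and the hypothesis that $l$ is a $\kappa$-logic. No further technicalities arise, so the proof is short once this combination is spelled out.
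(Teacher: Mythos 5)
Your proof is correct and follows essentially the same route as the paper: infima are handled by Lemma \ref{intersectionOfFilters}, and for $\kappa$-directed suprema one shows the union is a filter by cutting $\Gamma$ down to $\Gamma'\in P_{<\kappa}(\Gamma)$ with $\Gamma'\vdash\varphi$ (using that $l$ is a $\kappa$-logic) and then using $\kappa$-directedness to find a single filter containing $v(\Gamma')$. Your write-up is in fact slightly more careful than the paper's in tracking the witnesses $v(\gamma)\in F_{t_\gamma}$ rather than $\gamma$ itself.
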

\begin{prooof} The statement about infima is Lemma \ref{intersectionOfFilters}. For the statement about \linebreak suprema we need to show that a $\kappa$-directed union of filters is a filter.

Let $(F_i)_{i\in I}$ be a $\kappa$-directed system of filters. Let 
$\Gamma \cup \{\varphi\} \subseteq \Fm{\Sigma}{X}$ such that $\Gamma \vdash_l \varphi$ and $v \colon \Fm{\Sigma}{X} \to A$ be  a morphism satisfying $v(\Gamma) \subseteq \bigcup_{i \in I}F_i$. 
%one has $v(\varphi) \in F$. 
Since $l$ is of cardinality $ \leq\kappa$, there is $\Gamma' \subseteq \Gamma$ with $|\Gamma'| < \kappa$ such that $\Gamma' \vdash \varphi$. Every element $\gamma \in \Gamma'$ is in some $F_{\gamma}$ and all these $F_{\gamma}$ are contained in some $F_j$, since the system is $\kappa$-directed. Since $F_j$ is a filter, we have that $v(\varphi) \in F_j \subseteq \bigcup_{i \in I}F_i$. This shows the claim.
\end{prooof}

\begin{Lem}\label{FiltersArePresentableLattice}
Let $A$ be a $\Sigma$-structure. Then $\Fi{l}{A}$ is a $\kappa$-presentable lattice.
\end{Lem}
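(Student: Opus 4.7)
The plan is to identify a natural family of $\kappa$-small elements of $\Fi{l}{A}$ and to show that every filter is a (in fact $\kappa$-directed) supremum of such elements. Since $\Fi{l}{A}$ is closed under arbitrary intersections by Lemma \ref{intersectionOfFilters}, for any subset $S \subseteq A$ we may define $\langle S \rangle \in \Fi{l}{A}$ to be the smallest $l$-filter on $A$ containing $S$, namely the intersection of all filters containing $S$. The candidates for $\kappa$-small elements will be the $\langle S \rangle$ with $|S| < \kappa$.

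First I would verify $\kappa$-smallness. Suppose $\langle S \rangle \le \sup_{i\in I} F_i$ for some $\kappa$-directed family $(F_i)_{i\in I}$ in $\Fi{l}{A}$. By Lemma \ref{InclusionOfFiltersPreservesDirectedSuprema} this supremum computed in $\Fi{l}{A}$ coincides with the union $\bigcup_{i \in I} F_i$ in $\wp(A)$, so $S \subseteq \bigcup_i F_i$. For each $s \in S$ pick $i_s$ with $s \in F_{i_s}$. Since $|S| < \kappa$ and the family is $\kappa$-directed, there exists $j \in I$ with $F_{i_s} \subseteq F_j$ for all $s \in S$; hence $S \subseteq F_j$, and by minimality $\langle S \rangle \subseteq F_j$. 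Thus $\langle S \rangle$ is $\kappa$-small in the sense of Remark \ref{regular-obs}.

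Next I would show that every filter is the supremum of such elements. Fix $F \in \Fi{l}{A}$ and consider $\mathcal D := \{ \langle S \rangle \mid S \in P_{<\kappa}(F)\} \subseteq \Fi{l}{A}$. Regularity of $\kappa$ ensures $\mathcal D$ is $\kappa$-directed: given fewer than $\kappa$ elements $\langle S_t \rangle$ with $S_t \in P_{<\kappa}(F)$, the union $\bigcup_t S_t$ again lies in $P_{<\kappa}(F)$, and $\langle \bigcup_t S_t \rangle$ dominates each $\langle S_t \rangle$. By Lemma \ref{InclusionOfFiltersPreservesDirectedSuprema} the supremum of $\mathcal D$ in $\Fi{l}{A}$ is its union in $\wp(A)$; this union is contained in $F$ (since $S \subseteq F$ forces $\langle S \rangle \subseteq F$) and contains $F$ (since $a \in F$ gives $a \in \langle \{a\}\rangle \in \mathcal D$). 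Therefore $F = \sup \mathcal D$.

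The only potential subtlety is the distinction between sup in $\Fi{l}{A}$ and union in $\wp(A)$; this is precisely where Lemma \ref{InclusionOfFiltersPreservesDirectedSuprema}, i.e.\ the closure of $\Fi{l}{A}$ under $\kappa$-directed unions, does the work. With those two steps together, $\Fi{l}{A}$ is a complete lattice in which every element is the supremum of the $\kappa$-small elements below it, i.e.\ a $\kappa$-presentable lattice.
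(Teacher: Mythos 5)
Your proposal is correct and follows essentially the same route as the paper's proof: generated filters $\langle S\rangle$ (the paper's $\bar{S}$) for $|S|<\kappa$ are shown to be $\kappa$-small using the fact that $\kappa$-directed suprema of filters are unions (Lemma \ref{InclusionOfFiltersPreservesDirectedSuprema}), and every filter is then exhibited as the union of the filters generated by its subsets of size $<\kappa$. Your explicit check that this family is $\kappa$-directed (via regularity of $\kappa$) is a small bonus the paper leaves implicit; otherwise the arguments coincide.
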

\begin{prooof}
Completeness has been stated in Lemma \ref{intersectionOfFilters}. In particular, given an arbitrary subset $S \subseteq A$ one can form the filter generated by $S$ by setting $\overline{S} := \bigcap_{F \in \Fi{l}{A}, \ S \subseteq F} F$. The operation $\overline{(-)}$ is evidently a closure operation on $\wp(A)$ (indeed it is the closure operation coming from the adjunction of the filter pair).

It remains to show that $\Fi{l}{A}$ is $\kappa$-presentable, i.e. that every $F \in \Fi{l}{A}$ is a $\kappa$-directed supremum of $\kappa$-small elements. 

The filters $\bar{S}$ generated by subsets $S \subseteq A$ with $|S| < \kappa$ are $\kappa$-small elements: Indeed, if $\bar{S} \subseteq \bigvee_{i \in I}F_i$ for some $\kappa$-directed system $(F_i)_{i \in I}$, then also $S \subseteq \bigvee_{i \in I}F_i = \bigcup_{i \in I}F_i$ (the latter equality follows from Lemma \ref{InclusionOfFiltersPreservesDirectedSuprema}, and is explicitly shown in the proof there). Hence each of the less than $\kappa$ many elements of $S$ is in some $F_i$, hence all are simultaneously in some $F_j$ (because $(F_i)_{i \in I}$ is a $\kappa$-directed system), i.e. $S \subseteq F_j$, hence $\bar{S} \subseteq \overline{F_j} = F_j$, the latter equality holding because $F_j$ is a filter.

Now we claim that every $F \in \Fi{l}{A}$ can be written as 
$$F = \bigvee_{F' \subseteq F, \ |F'| < \kappa}\!\!\!\!\!\!\!\!\!\! \overline{F}' = \bigcup_{F' \subseteq F, \ |F'| < \kappa} \!\!\!\!\!\!\!\!\!\! \overline{F}'.$$ 
Indeed, since for every element $f \in F$ the singleton subset $\{f\}$ occurs in the index of the supremum, the inclusion $\subseteq$ holds. On the other hand for every $F'$ occurring in the index of the supremum we have $\overline{F}' \subseteq \overline{F} = F$, hence the inclusion $\supseteq$ holds.
\end{prooof}

% \begin{Obs}
%  The previous proof works for all $\kappa$ and does not depend on $\kappa$ being the cardinality of the logic $l$. Indeed, another proof would be to show that $\Fi{l}{A}$ is $\aleph_0$-presentable (just replace $\kappa$ with $\aleph_0$ in the above proof) and conclude that it is $\kappa$-presentable for every infinite regular $\kappa$. It is in Lemma \ref{InclusionOfFiltersPreservesDirectedSuprema} that the cardinality of the logic enters. 
% \end{Obs}

\begin{Lem}\label{PreimagesOfFiltersAreFilters}
Preimages of filters under homomorphisms of $\Sigma$-structures are filters again.
\end{Lem}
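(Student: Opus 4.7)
The plan is a direct unfolding of definitions, using that the composite of $\Sigma$-homomorphisms is again a $\Sigma$-homomorphism (and in particular the composite of a valuation with a structure map is a valuation).

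Concretely, let $h \colon M \to N$ be a morphism of $\Sigma$-structures and $F \in \Fi{l}{N}$. I want to show that $h^{-1}(F) \subseteq M$ satisfies the defining property of an $l$-filter from Definition \ref{DefTheoryFilterLatticeOfFilters}. So fix $\Gamma \cup \{\varphi\} \subseteq \Fm{\Sigma}{X}$ with $\Gamma \vdash \varphi$ and a valuation $v \colon \Fm{\Sigma}{X} \to M$ with $v(\Gamma) \subseteq h^{-1}(F)$; I need $v(\varphi) \in h^{-1}(F)$.

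The key observation is that $h \circ v \colon \Fm{\Sigma}{X} \to N$ is itself a $\Sigma$-homomorphism, hence a valuation on $N$. The hypothesis $v(\Gamma) \subseteq h^{-1}(F)$ is, by definition of preimage, equivalent to $(h \circ v)(\Gamma) \subseteq F$. Applying the filter property of $F$ on $N$ to the entailment $\Gamma \vdash \varphi$ and the valuation $h \circ v$ gives $(h \circ v)(\varphi) \in F$, i.e. $h(v(\varphi)) \in F$, i.e. $v(\varphi) \in h^{-1}(F)$, as required.

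There is no real obstacle here; the statement is essentially a restatement of the structural/substitutional nature of the filter condition, transported along $h$. The only thing to keep track of is that valuations are simply $\Sigma$-homomorphisms, so that closure of morphisms under composition is what makes the argument go through. Note that this lemma, together with Lemmas \ref{intersectionOfFilters}, \ref{InclusionOfFiltersPreservesDirectedSuprema} and \ref{FiltersArePresentableLattice}, supplies exactly what is needed to assemble the filter pair $\FilPa(l) = (\Fi{l}{-}, \iota_l)$ in the subsequent Theorem \ref{FilterPairsFromLogics}: this lemma provides the functoriality of $\Fi{l}{-}$ (by sending $h$ to $h^{-1}$ restricted to filters), while the naturality of $\iota_l$ is automatic from the fact that $\wp$ also acts by preimage.
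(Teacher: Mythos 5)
Your proof is correct and follows exactly the same route as the paper's: precompose the valuation with the structure homomorphism, note that $h\circ v$ is again a valuation, and apply the filter property of $F$ to it. The extra remarks about how the lemma feeds into Theorem \ref{FilterPairsFromLogics} are accurate but not part of the proof itself.
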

\begin{prooof}
Let $f \colon A' \to A$ be homomorphism of $\Sigma$-structures and $F \subseteq A$ a filter. To see that $f^{-1}(F) \subseteq A'$ is a filter again, consider $\Gamma \cup \{\varphi\} \subseteq \Fm{\Sigma}{X}$, and a homomorphism $v \colon \Fm{\Sigma}{X} \to A'$ such that $v(\Gamma) \subseteq f^{-1}(F)$. Then $(f \circ v)(\Gamma)=f(v(\Gamma)) \subseteq F$, hence, since $F$ is a filter and $f \circ v$ a homomorphism, $f(v(\varphi)) \in F$, so $v(\varphi) \in f^{-1}(F)$.
\end{prooof}

\medskip

Denote by $i$ the collection of the inclusions $i_A \colon Fi_l(A) \hookrightarrow \wp(A)$.

\begin{Teo}
\label{FilterPairsFromLogics}
 Let $l$ be a $\kappa$-logic. Then $(Fi_l(-), i)$ is a $\kappa$-filter pair.%, where $Fi_l$ is the functor sending a $\Sigma$-structure to its lattice of $l$-filters (and homomorphisms to the preimage maps) and $i$ the natural transformation including this lattice into the power set.
\end{Teo}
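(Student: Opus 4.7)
The plan is to assemble the various lemmas established above into a verification of the three conditions in Definition \ref{DefinitionKappaFilterPair}. Essentially all the hard work has already been done in Lemmas \ref{intersectionOfFilters} through \ref{PreimagesOfFiltersAreFilters}; what remains is to check functoriality and naturality, and then to cite the preceding lemmas for the landing category and for the preservation properties of $i$.

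First I would define the action of $Fi_l(-)$ on morphisms. Given $h : M \to N$ in $\Sigma\textrm{-str}$, Lemma \ref{PreimagesOfFiltersAreFilters} shows that $F \mapsto h^{-1}(F)$ sends $Fi_l(N)$ into $Fi_l(M)$, and this map is evidently order preserving. Functoriality ($(g\circ h)^{-1}=h^{-1}\circ g^{-1}$ and $\mathrm{id}^{-1}=\mathrm{id}$) is immediate from elementary set theory, so we get a contravariant functor $Fi_l(-) : \Sigma\textrm{-str}^{op} \to \mathrm{Pos}$. That it lands in $L_\kappa$ is the content of Lemma \ref{FiltersArePresentableLattice}: each $Fi_l(A)$ is a $\kappa$-presentable lattice.

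Next I would verify the conditions on $i = (i_A)_{A \in \Sigma\textrm{-str}}$. Condition 1 of Definition \ref{DefinitionKappaFilterPair}, that each inclusion $i_A : Fi_l(A) \hookrightarrow \wp(A)$ preserves arbitrary infima and $\kappa$-directed suprema, is exactly Lemma \ref{InclusionOfFiltersPreservesDirectedSuprema} (with Lemma \ref{intersectionOfFilters} for the infima part). Note that the top filter $\top = A$ is indeed sent to $A$. Condition 2, naturality, asks that for every $h : M \to N$ the square with $i_M, i_N, h^{-1}$ and $Fi_l(h)$ commutes; but by our definition $Fi_l(h) = h^{-1}|_{Fi_l(N)}$, so for any $F \in Fi_l(N)$ we have $i_M(Fi_l(h)(F)) = h^{-1}(F) = h^{-1}(i_N(F))$, which is the required commutativity.

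The only subtle point is the observation that $\kappa$-directed unions of filters are filters, since this is where the hypothesis that $l$ is a $\kappa$-logic (not merely any structural consequence relation) actually enters; but this subtlety has already been absorbed into Lemma \ref{InclusionOfFiltersPreservesDirectedSuprema}. Consequently the proof reduces to a short assembly of citations, so there is no genuine obstacle remaining. I would conclude by pointing out that the pair $(Fi_l(-), i)$ constructed above satisfies all the requirements and is thus a $\kappa$-filter pair.
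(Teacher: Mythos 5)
Your proposal is correct and follows essentially the same route as the paper: the paper's own proof is exactly this assembly of Lemmas \ref{intersectionOfFilters}--\ref{PreimagesOfFiltersAreFilters}, with functoriality via preimages and naturality noted briefly. Your write-up merely spells out the functoriality and naturality checks that the paper leaves as ``clear,'' which is fine.
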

\begin{prooof}
By Lemmas \ref{FiltersArePresentableLattice} and \ref{PreimagesOfFiltersAreFilters} $Fi_l$ is a well defined functor from $\Sigma$-structures to $\kappa$-presentable lattices. It is clear that $i$ is a natural transformation. The remaining condition for a $\kappa$-filter pair is ensured by Lemma \ref{InclusionOfFiltersPreservesDirectedSuprema}.
\end{prooof}

\begin{Df}
We denote the filter pair of Proposition \ref{FilterPairsFromLogics} by \linebreak $\FilPa(l) := (Fi_l(-), i)$ and call it the \emph{canonical filter pair} of the logic $l$. 
\end{Df}

The next theorem says that passing from a logic to a filter pair as in Prop. \ref{FilterPairsFromLogics} and then back to a logic as in Prop. \ref{PropLogicsFromFilterPairs} gives back the same logic.

\begin{Teo}\label{TheoremEveryLogicComesFromAFilterPair}
 Let $l = (\Fm{\Sigma}{X}, {\vdash})$ be a logic. Then the closure operator $i_\Fm{\Sigma}{X} \circ j_\Fm{\Sigma}{X}$ on $\Fm{\Sigma}{X}$ coming from the filter pair $\FilPa(l)$ is equal to the closure operator associated to the consequence relation $\vdash$. In other words, $\Log{X}(\FilPa(l))=l$.
\end{Teo}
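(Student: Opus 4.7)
The plan is simply to unpack the construction of $\FilPa(l)$ and of $\Log{X}(\FilPa(l))$ on the formula algebra, and observe that each piece is exactly what is needed to recover $C_\vdash$.

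First I would use the remark after Definition \ref{DefTheoryFilterLatticeOfFilters}, which says that $l$-filters on the free algebra $\Fm{\Sigma}{X}$ are the same as $l$-theories. So the lattice $G(\Fm{\Sigma}{X}) = \Fi{l}{\Fm{\Sigma}{X}}$ appearing in $\FilPa(l)$ is precisely the lattice of $\vdash$-closed subsets of $\Fm{\Sigma}{X}$, and the map $i_{\Fm{\Sigma}{X}}$ is just the inclusion of this lattice into $\wp(\Fm{\Sigma}{X})$.

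Next I would compute $j_{\Fm{\Sigma}{X}}$ via Theorem \ref{LeftAdjointInFilterPairs}: since $i_{\Fm{\Sigma}{X}}$ preserves arbitrary infima, its left adjoint is given by
\[
j_{\Fm{\Sigma}{X}}(\Gamma) \;=\; \inf\bigl\{T \in \Fi{l}{\Fm{\Sigma}{X}} \,\bigl|\, \Gamma \subseteq i_{\Fm{\Sigma}{X}}(T)\bigr\} \;=\; \bigcap\{T : T \text{ is an $l$-theory and } T \supseteq \Gamma\}.
\]
Hence $(i_{\Fm{\Sigma}{X}} \circ j_{\Fm{\Sigma}{X}})(\Gamma)$ is the intersection of all $l$-theories containing $\Gamma$.

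Finally I would identify this intersection with $C_\vdash(\Gamma)$. One inclusion is immediate: $C_\vdash(\Gamma)$ is itself an $l$-theory containing $\Gamma$ (by reflexivity and idempotence of $\vdash$), so the intersection is contained in $C_\vdash(\Gamma)$. For the other direction, any $l$-theory $T$ with $\Gamma \subseteq T$ must, by definition of theory, contain every $\varphi$ with $\Gamma \vdash \varphi$, i.e.\ contain $C_\vdash(\Gamma)$; hence $C_\vdash(\Gamma)$ is contained in the intersection. Combining these two inclusions yields $C_G = C_\vdash$, which is the claim $\Log{X}(\FilPa(l)) = l$.

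There is no real obstacle here: everything is a matter of unwinding the adjunction supplied by Theorem \ref{LeftAdjointInFilterPairs} and invoking the observation that on the free algebra the notions of filter and theory coincide. The structurality and $\kappa$-ary character of the resulting closure operator are guaranteed for free by Proposition \ref{PropLogicsFromFilterPairs}, so nothing extra needs to be checked.
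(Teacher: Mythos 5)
Your proposal is correct and takes essentially the same route as the paper: both arguments boil down to identifying the filter-pair closure operator with ``smallest $l$-filter containing'' via the adjunction, and then identifying $l$-filters on the free algebra $\Fm{\Sigma}{X}$ with $l$-theories, so that both operators send $\Gamma$ to the smallest theory containing it. The only cosmetic difference is that you invoke the remark following Definition \ref{DefTheoryFilterLatticeOfFilters} for the filter/theory coincidence (which the paper reproves inside the theorem using the identity substitution and structurality) and you spell out the computation of $j_{\Fm{\Sigma}{X}}$ from Theorem \ref{LeftAdjointInFilterPairs}, which the paper delegates to the operator $\bar{(-)}$ from Lemma \ref{FiltersArePresentableLattice}.
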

\begin{prooof}
The closure operator on $\wp(A)$, for a $\Sigma$-structure $A$, associated to the filter pair $(Fi_l, i)$ is exactly the operator $\bar{(-)}$ from the proof of Lemma \ref{FiltersArePresentableLattice}, which sends a set to the smallest filter containing it. This is true in particular for $A=\Fm{\Sigma}{X}$. The closure operator on $\Fm{\Sigma}{X}$ associated to the consequence relation $\vdash$ is the operator which sends a set to the smallest theory containing it. It thus suffices to show that the filters on the algebra $\Fm{\Sigma}{X}$ are exactly the theories of the logic $l$.

Let $F \subseteq \Fm{\Sigma}{X}$ be a filter for $l$. Let $\Gamma \cup \{\varphi\} \subseteq F$ such that $\Gamma \vdash \varphi$. Then $\Gamma = \id(\Gamma)$, so by the filter property $\varphi = \id(\varphi) \in F$.

On the other hand let $T \subseteq \Fm{\Sigma}{X}$ be a theory. Let $\Gamma \cup \{\varphi\} \subseteq F$ such that $\Gamma \vdash \varphi$ and let $\sigma \colon \Fm{\Sigma}{X} \to \Fm{\Sigma}{X}$ be a homomorphism such that $\sigma(\Gamma) \subseteq T$. Then from substitution invariance we get $\sigma(\Gamma) \vdash \sigma(\varphi)$ and hence, since $T$ is a theory, $\sigma(\varphi) \in T$.
\end{prooof}

\medskip

For the following statement we depart from our standing assumption that the cardinal $\kappa$ is regular.

\begin{Teo} \label{kappa-filter-logic}
The canonical filter pair $\FilPa(l)$ of a logic $l = (Fm_\Sigma(X), {\vdash})$ of cardinality $ \leq \kappa$ -- \emph{where $\kappa$ is allowed to be singular} -- is a $reg(\kappa)$-filter pair.
\end{Teo}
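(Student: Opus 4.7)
The plan is to reduce to the regular case already handled by Theorem \ref{FilterPairsFromLogics}. The canonical filter pair $\FilPa(l) = (Fi_l(-), i)$ is constructed directly from $l$ without reference to any particular cardinal, so there is really a single filter pair to analyze; being a $reg(\kappa)$-filter pair is a property of this fixed pair, namely that $Fi_l(-)$ takes values in $reg(\kappa)$-presentable lattices and that each $i_A$ preserves arbitrary infima and $reg(\kappa)$-directed suprema.

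The key step is to upgrade the cardinality bound. Since $\kappa \leq reg(\kappa)$ by Definition \ref{DefRegularizationOfCardinal}, any witness $\Gamma' \subseteq \Gamma$ with $|\Gamma'| < \kappa$ for a derivation $\Gamma \vdash \varphi$ automatically satisfies $|\Gamma'| < reg(\kappa)$. Hence a logic of cardinality $\leq \kappa$ is a fortiori of cardinality $\leq reg(\kappa)$; in other words, $l$ is a $reg(\kappa)$-logic in the sense of Definition \ref{DefKappaLogic}.

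Having made this observation, I would simply invoke Theorem \ref{FilterPairsFromLogics} with $reg(\kappa)$ in place of $\kappa$. Since $reg(\kappa)$ is regular by construction, this application is legitimate under the standing regularity convention of Section 2, and it delivers exactly the conclusion: $\FilPa(l)$ is a $reg(\kappa)$-filter pair. The underlying content --- that $Fi_l(A)$ is $reg(\kappa)$-presentable (Lemma \ref{FiltersArePresentableLattice}), that $i_A$ preserves arbitrary infima (Lemma \ref{intersectionOfFilters}) and $reg(\kappa)$-directed unions (Lemma \ref{InclusionOfFiltersPreservesDirectedSuprema}), and that preimages of filters are filters (Lemma \ref{PreimagesOfFiltersAreFilters}) --- is inherited verbatim from the regular case.

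The main hurdle is conceptual rather than technical: one must confirm that the only place regularity of $\kappa$ was used in those earlier lemmas was the $\kappa$-directed indexing of unions arising from the cardinality bound on $l$, and inspection of the proofs (in particular the argument in Lemma \ref{InclusionOfFiltersPreservesDirectedSuprema} that a $\kappa$-directed union of filters is a filter) shows that this is indeed the case. In the singular case the statement just records the price to be paid: one works with $reg(\kappa) = \kappa^+$ instead of $\kappa$, in agreement with Example \ref{ExampleKappaAryPartCanFailToBeLogic} which shows that a $\kappa$-filter pair structure need not be available for singular $\kappa$.
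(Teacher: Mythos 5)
Your proof is correct and is essentially identical to the paper's: observe that a logic of cardinality $\leq\kappa$ is a $reg(\kappa)$-logic (since $\kappa\leq reg(\kappa)$) and then apply Theorem \ref{FilterPairsFromLogics} with the regular cardinal $reg(\kappa)$. The extra inspection of Lemmas \ref{intersectionOfFilters}--\ref{PreimagesOfFiltersAreFilters} is a harmless sanity check but not needed, since those lemmas are stated for an arbitrary regular cardinal and apply verbatim to $reg(\kappa)$.
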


\begin{prooof}
By hypothesis $l$ is a $\k$-logic, thus it is also a $reg(\k)$-logic. Now apply Theorem \ref{FilterPairsFromLogics}.
\end{prooof}

%%%%%%%%%%%%%%%%%%%%%%%%%%%%%%%%%%%%%%%%%%%%%%%%%%%%%%%%%%%%%%%%%%%%%%%%%%%%%%%%%%%%%%%%%%%%%%%%%%%%%%%%%%%%%%%%%%%%%
%
%                                             Section Natural Extensions
%
%%%%%%%%%%%%%%%%%%%%%%%%%%%%%%%%%%%%%%%%%%%%%%%%%%%%%%%%%%%%%%%%%%%%%%%%%%%%%%%%%%%%%%%%%%%%%%%%%%%%%%%%%%%%%%%%%%%%%
\section{Natural Extensions}\label{SectionNaturalExtensions}

\begin{Df}
For sets $X \subseteq Y$ of variables, a {\bf natural extension} of a logic $l=(\Fm{\Sigma}{X}, {\vdash}_l)$ to $\Fm{\Sigma}{Y}$ is a logic $(\Fm{\Sigma}{Y}, {\vdash})$ which is a conservative extension of $l$ with the same cardinality as $l$.
\end{Df}

One reason for studying natural extensions in Abstract Algebraic Logic is that some proofs of transfer theorems, that are central in it, require the existence of extensions of logics to bigger sets of variables.

\medskip

We begin this section by listing four tentative constructions of natural extensions and summarizing the results on them and their interrelations. 
In the context of constructing a natural extension of a logic $l$ of cardinality $\kappa$ -- where $\kappa$ is a regular cardinal -- the following relations between subsets and elements of $\Fm{\Sigma}{Y}$ have been defined in the literature:

\begin{enumerate}[(a)]
 \item ${\vdash}^{\textrm{\tiny {\L}S}}$ ({\L}o\'s-Suszko), defined by $$\Gamma \vdash^{\textrm{\tiny {\L}S}} \varphi\ \ \ i\!f\!\!f \ \ there\ are\ an\ automorphism\ v \colon \Fm{\Sigma}{Y} \to \Fm{\Sigma}{Y}$$
 $$and\ \Gamma' \subseteq \Gamma\ \ and\ \varphi \ s.t.\ \ v(\Gamma' \cup \{\varphi\})\subseteq \Fm{\Sigma}{X}\ and\ v(\Gamma')\vdash_l v(\varphi).$$

 \item ${\vdash}^{\textrm{\tiny SS}}$ (Shoesmith-Smiley), defined by $$\Gamma \vdash^{\textrm{\tiny SS}} \varphi\ \ \ i\!f\!\!f \ \ there\ are\ \Gamma' \cup \varphi' \subseteq \Fm{\Sigma}{X}\ and\ v \colon X \to \Fm{\Sigma}{Y}\ s.t.$$ $$ v(\Gamma') \subseteq \Gamma, v(\varphi')=\varphi\ and\ \Gamma' \vdash_{l_X} \varphi'.$$

 \item ${\vdash}^{-}$ (P{\v r}enosil), the smallest consequence relation on $\Fm{\Sigma}{Y}$ satisfying the rules $\Gamma \vdash^{-} \varphi$ whenever $\Gamma \cup \{\varphi\} \subseteq \Fm{\Sigma}{X}$ and $\Gamma \vdash_l \varphi$.
  
 \item ${\vdash}^{+}_\kappa$ (P{\v r}enosil), defined as the $\kappa$-ary part (see Def. \ref{DefKappaAryPartOfALogic}) of the relation ${\vdash}^{+}$, given by 
 $$\Gamma \vdash^{+} \varphi \ \ \ i\!f\!\!f \ \ \sigma(\Gamma)\vdash_l\sigma(\varphi)\ for\ every\ substitution\ \sigma \colon \Fm{\Sigma}{Y} \to \Fm{\Sigma}{X}.$$
 \end{enumerate}

The {\L}o\'s-Suszko relation ${\vdash}^{\textrm{\tiny {\L}S}}$ is a conservative extension of ${\vdash}_l$ to $\Fm{\Sigma}{Y}$ which satisfies monotonicity and reflexivity \cite[Prop. 16]{Prenosil} and is clearly $\kappa$-ary, but may fail to satisfy structurality  \cite[Prop. 18]{Prenosil}.

While the Shoesmith-Smiley relation ${\vdash}^{\textrm{\tiny SS}}$ was for a while thought to always yield a natural extension, this was shown not to be the case in general by Cintula and Noguera. It is always a conservative extension of ${\vdash}_l$ that satisfies monotonicity, structurality, reflexivity and is $\kappa$-ary \cite[Lem. 2.4]{CintulaNoguera}, but it may fail to satisfy the Cut rule (i.e. idempotence) \cite[Prop. 2.8]{CintulaNoguera}.
 
The {\L}o\'s-Suszko relation is always contained in the Shoesmith-Smiley relation and they coincide if either $|X|<|Y|$ or $\card(l) \leq |X|$ \cite[Lem. 2.7]{CintulaNoguera} \cite[Prop. 15]{Prenosil} or if the {\L}o\'s-Suszko relation actually is a logic \cite[Thm. 17]{Prenosil}. Since structurality can fail for ${\vdash}^{\textrm{\tiny {\L}S}}$ but not for ${\vdash}^{\textrm{\tiny SS}}$, they need not coincide in general.

In view of their results Cintula and Noguera asked whether a logic always has a natural extension to a given bigger set of variables. For logics of regular cardinality $\kappa$, P{\v r}enosil gave an affirmative answer: both ${\vdash}^{-}$ and ${\vdash}^{+}_\kappa$ are always natural extensions of $l$, with ${\vdash}^-$ being the minimal and the ${\vdash}^{+}_\kappa$ the maximal one \cite[Prop. 7, Cor. 6]{Prenosil}.

Furthermore, Cintula and Noguera showed that whenever $|X|=|Y|$ or \linebreak $\card(l_X)\leq |X|^+$ %(where $(-)^+$ denotes the successor of a cardinal), 
there is a \emph{unique} natural extension and that it is given by the Shoesmith-Smiley relation \cite[Thm 2.6]{CintulaNoguera}. They asked whether there is always a unique natural extension, which P{\v r}enosil showed not to be the case \cite[Prop. 19, Prop. 20]{Prenosil}.

\begin{Obs}\label{ObsPorqueTratamosDoCasoRegular} We now elucidate the assumption of the regularity of $\kappa$.
 In constructing the minimal and the maximal natural extensions 
 $L^-:=((\Fm{\Sigma}{Y}, {\vdash}^-))$ and $L^+_{\kappa}:=(\Fm{\Sigma}{Y}, {\vdash}_\kappa^+)$, P{\v r}enosil leaves implicit the assumption that the cardinality of the logic in question is a regular cardinal. 

The logic $L^+_{\kappa}$ is explicitly defined by taking the $\kappa$-ary part of another logic, and thus by Fact \ref{FactForRegularCardinalKappaAryPartIsLogic} exists for {regular} $\kappa$, but it is not clear whether it exists in general for singular $\kappa$, see Example \ref{ExampleKappaAryPartCanFailToBeLogic}. 

The \emph{construction} of the logic $L^-$ does not involve taking the $\kappa$-ary part of another logic, but the proof that the resulting logic is $\kappa$-ary (the proof of \cite[Prop. 7]{Prenosil}) uses the $\kappa$-ary part and a priori again only works for regular $\kappa$.

Further, in \cite[Cor. 8]{Prenosil}, P{\v r}enosil characterizes $L^-$ as the logic over the language with the enhanced set of variables generated by the rules of the original logic. This is another construction that does not involve taking the $\kappa$-ary part of a logic, but a closure process like generating a logic from rules is the kind of thing where one often passes from a cardinal to its regularization (see Def. \ref{DefRegularizationOfCardinal}) as it happens e.g. in Example \ref{ExampleKappaAryPartCanFailToBeLogic}. So it is not clear that this offers a way around the regularity assumption.

As it stands, it thus remains an open problem whether every logic of singular cardinality has a natural extension. What we show below about the singular case, is the next best thing, namely that P{\v r}enosil's construction gives a conservative extension of cardinality at most the successor of the cardinality of the original logic.

The existence of natural extensions in this remaining open case is, however, a problem of no practical importance. Singular cardinals are rare (the smallest one is $\aleph_{\omega}$) and logics of singular cardinality are to our knowledge unheard of in concrete applications. 

We merely wish to point out this state of affairs, in order to explain the appearance of the regularity assumptions in this work.
\end{Obs}

\medskip

{\bf In the following we keep the standing assumption that all occurring logics have regular cardinality.}

\medskip

We start by shedding some more light on the connection between the {\L}o\'s-Suszko relation and the Shoesmith-Smiley relation. As we just remarked, both these relations are monotonous and reflexive and the former can fail to be structural, while the latter is always structural. Since relations that are monotonous, reflexive and structural are closed under arbitrary intersections, there is a smallest such relation containing ${\vdash}^{\textrm{\tiny {\L}S}}$, which we call its structural closure.

\begin{Prop}\label{PropShoesmithSmileyIsStructuralClosureOfLosSuszko}
 The Shoesmith-Smiley relation ${\vdash}^{\textrm{\tiny SS}}$ is the structural closure of the {\L}o\'s-Suszko relation  ${\vdash}^{\textrm{\tiny {\L}S}}$.
\end{Prop}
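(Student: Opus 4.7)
The plan is to verify three things: (i) $\vdash^{\textrm{\tiny SS}}$ is monotonous, reflexive and structural; (ii) $\vdash^{\textrm{\tiny {\L}S}} \subseteq \vdash^{\textrm{\tiny SS}}$; and (iii) any monotonous, reflexive, structural relation on $\Fm{\Sigma}{Y}$ containing $\vdash^{\textrm{\tiny {\L}S}}$ must contain $\vdash^{\textrm{\tiny SS}}$. Items (i) and (ii) are recorded in the literature summary just above (\cite[Lem. 2.4]{CintulaNoguera}, \cite[Prop. 15-16]{Prenosil}), so they can be quoted. The substantive step is (iii).

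For (iii), fix a monotonous, reflexive, structural relation $\vdash^{\ast}$ with $\vdash^{\textrm{\tiny {\L}S}}\subseteq\ \vdash^{\ast}$. Suppose $\Gamma \vdash^{\textrm{\tiny SS}} \varphi$, witnessed by $\Gamma'\cup\{\varphi'\}\subseteq \Fm{\Sigma}{X}$, a map $v\colon X\to \Fm{\Sigma}{Y}$ (uniquely extending to a $\Sigma$-homomorphism $\Fm{\Sigma}{X}\to \Fm{\Sigma}{Y}$, which I also call $v$) such that $v(\Gamma')\subseteq \Gamma$, $v(\varphi')=\varphi$ and $\Gamma'\vdash_{l}\varphi'$. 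The key observation is that $\Gamma'\vdash^{\textrm{\tiny {\L}S}}\varphi'$: indeed, the identity automorphism of $\Fm{\Sigma}{Y}$ maps $\Gamma'\cup\{\varphi'\}$ into $\Fm{\Sigma}{X}$ (since it is already there) and the condition $\Gamma'\vdash_{l}\varphi'$ is given by hypothesis. Hence $\Gamma'\vdash^{\ast}\varphi'$. Now extend $v$ to a substitution $\hat v\colon \Fm{\Sigma}{Y}\to \Fm{\Sigma}{Y}$ by setting $\hat v(y):=y$ for $y\in Y\setminus X$; by structurality of $\vdash^{\ast}$ we get $v(\Gamma')=\hat v(\Gamma')\vdash^{\ast}\hat v(\varphi')=v(\varphi')=\varphi$, and by monotonicity (applied to $v(\Gamma')\subseteq \Gamma$) we conclude $\Gamma \vdash^{\ast}\varphi$.

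Combining (i)-(iii): by (i), $\vdash^{\textrm{\tiny SS}}$ is one of the monotonous, reflexive, structural relations containing $\vdash^{\textrm{\tiny {\L}S}}$, hence contains the structural closure; and by (iii) it is contained in every such relation, hence in the structural closure. Thus $\vdash^{\textrm{\tiny SS}}$ equals the structural closure of $\vdash^{\textrm{\tiny {\L}S}}$.

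I do not expect a real obstacle here: the only place where some care is needed is in realizing that the $\Sigma$-homomorphism $v$ which witnesses the Shoesmith-Smiley relation is defined on $\Fm{\Sigma}{X}$ rather than on $\Fm{\Sigma}{Y}$, so one has to extend it (arbitrarily) to an endomorphism of $\Fm{\Sigma}{Y}$ before invoking structurality of $\vdash^{\ast}$. Once this is noticed, the argument becomes a short application of structurality plus monotonicity, together with the trivial remark that the identity automorphism witnesses $\vdash^{\textrm{\tiny {\L}S}}$ for any instance of $\vdash_{l}$ already living inside $\Fm{\Sigma}{X}$.
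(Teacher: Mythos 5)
Your proposal is correct and follows essentially the same route as the paper: one inclusion comes from $\vdash^{\textrm{\tiny SS}}$ being monotonous, reflexive, structural and containing $\vdash^{\textrm{\tiny {\L}S}}$, and the other from noting that every $\vdash_l$-instance inside $\Fm{\Sigma}{X}$ is an {\L}o\'s--Suszko instance (via the identity automorphism) and then applying structurality and monotonicity of an arbitrary candidate relation; the paper phrases this second step as ``the structural closure contains all endomorphic images of $\vdash_l$-pairs, which is exactly $\vdash^{\textrm{\tiny SS}}$,'' but the substance is identical, including your (needed) explicit extension of the witnessing map $v$ to an endomorphism of $\Fm{\Sigma}{Y}$.
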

\begin{prooof}
Denote by ${\vdash}$ the structural closure of the {\L}o\'s-Suszko relation, i.e. the intersection of all monotonous, reflexive and structural relations containing ${\vdash}^{\textrm{\tiny {\L}S}}$. Since by the above remarks the Shoesmith-Smiley relation ${\vdash}^{\textrm{\tiny SS}}$ occurs in this intersection, we have ${\vdash}\ \subseteq\ {\vdash}^{\textrm{\tiny SS}}$.

\medskip

For the opposite inclusion note that by taking the inverse of the automorphism $v$ in the definition of the {\L}o\'s-Suszko relation, one arrives at the description
$$\Gamma \vdash^{\textrm{\tiny {\L}S}} \varphi\ \ \ i\!f\!\!f \ \ there\ are\ an\ automorphism\ v \colon \Fm{\Sigma}{Y} \to \Fm{\Sigma}{Y}$$
 $$and\ \Gamma' \cup \{\varphi'\}\subseteq \Fm{\Sigma}{X} \ \ s.t.\ \ \Gamma' \vdash_l \varphi',\ \ v(\varphi')=\varphi\ \ and\ \  v(\Gamma') \subseteq \Gamma.$$
This says that the pairs $(\Gamma, \varphi)$ with $\Gamma \vdash^{\textrm{\tiny {\L}S}} \varphi$ are exactly the images under $\Fm{\Sigma}{Y}$-automorphisms of pairs $(\Gamma', \varphi')$ with $\Gamma' \vdash_l \varphi'$. The structural closure ${\vdash}$ the contains all images under $\Fm{\Sigma}{Y}$-\emph{endo}morphisms of pairs $(\Gamma', \varphi')$ with $\Gamma' \vdash_l \varphi'$. But this says exactly that ${\vdash}^{\textrm{\tiny SS}} \subseteq {\vdash}$.
\end{prooof}

\medskip

It follows that if ${\vdash}^{\textrm{\tiny {\L}S}}$ is already structurally closed, it coincides with ${\vdash}^{\textrm{\tiny SS}}$. In particular this implies P{\v r}enosil's result that if the {\L}o\'s-Suszko relation is already a logic, then so is the Shoesmith-Smiley relation and the two coincide \cite[Thm. 17]{Prenosil}.

\medskip

As stated, the question of whether there always exists a natural extension has been answered by P{\v r}enosil, with his two constructions. Next we show, how natural extensions are also easily obtained through the language of filter pairs. We show that these natural extensions coincide with P{\v r}enosil's minimal ones and complete the picture by relating the {\L}o\'s-Suszko and Shoesmith-Smiley relations to this one.

\begin{Teo}\label{TheoremFilterPairsGiveConservativeExtensions}
Let $\Sigma$ be a signature, $(G,i)$ a filter pair over $\Sigma$ and $X,Y$ sets with $X \subseteq Y$. Then the induced inclusion $\Fm{\Sigma}{X} \to \Fm{\Sigma}{Y}$ is a conservative translation  $\Log{X}(G,i) \to \Log{Y}(G,i)$.
\end{Teo}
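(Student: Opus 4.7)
The plan is to combine naturality of $i$ with the abstract-logic description of the consequence relation given in the Remark following Proposition \ref{PropLogicsFromFilterPairs}: for each $\Sigma$-structure $A$, one has $D\vdash_{A} a$ iff every $i_A(z)$ that contains $D$ also contains $a$. I apply this for $A=\Fm{\Sigma}{X}$ and $A=\Fm{\Sigma}{Y}$, bridging the two via the $\Sigma$-homomorphism $\iota\colon\Fm{\Sigma}{X}\hookrightarrow\Fm{\Sigma}{Y}$ induced by $X\subseteq Y$, whose naturality square for $(G,i)$ yields the crucial identity $\iota^{-1}\circ i_{\Fm{\Sigma}{Y}}=i_{\Fm{\Sigma}{X}}\circ G(\iota)$.

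For the translation direction (``$\Gamma\vdash\varphi$ in $\Log{X}(G,i)$ implies $\iota(\Gamma)\vdash\iota(\varphi)$ in $\Log{Y}(G,i)$''), I take any $w\in G(\Fm{\Sigma}{Y})$ with $\iota(\Gamma)\subseteq i_{\Fm{\Sigma}{Y}}(w)$. Pulling back along $\iota$ and using naturality gives $\Gamma\subseteq i_{\Fm{\Sigma}{X}}(G(\iota)(w))$; the hypothesis then forces $\varphi\in i_{\Fm{\Sigma}{X}}(G(\iota)(w))=\iota^{-1}(i_{\Fm{\Sigma}{Y}}(w))$, so $\iota(\varphi)\in i_{\Fm{\Sigma}{Y}}(w)$, as required.

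For conservativity (the converse implication) the key point is to show that every $z\in G(\Fm{\Sigma}{X})$ arises as $G(\iota)(w)$ for some $w\in G(\Fm{\Sigma}{Y})$. Since $X$ is nonempty (being infinite) and $\Fm{\Sigma}{Y}$ is absolutely free on $Y$, I fix $x_0\in X$ and extend the set-map $Y\to\Fm{\Sigma}{X}$ that is the identity on $X$ and sends each $y\in Y\setminus X$ to $x_0$ to a $\Sigma$-homomorphism $r\colon\Fm{\Sigma}{Y}\to\Fm{\Sigma}{X}$ with $r\circ\iota=\mathrm{id}_{\Fm{\Sigma}{X}}$. Contravariance of $G$ gives $G(\iota)\circ G(r)=\mathrm{id}_{G(\Fm{\Sigma}{X})}$, so $w:=G(r)(z)$ lifts $z$. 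Given $z$ with $\Gamma\subseteq i_{\Fm{\Sigma}{X}}(z)$, naturality yields $i_{\Fm{\Sigma}{X}}(z)=\iota^{-1}(i_{\Fm{\Sigma}{Y}}(w))$, hence $\iota(\Gamma)\subseteq i_{\Fm{\Sigma}{Y}}(w)$; applying $\iota(\Gamma)\vdash\iota(\varphi)$ in $\Log{Y}(G,i)$ gives $\iota(\varphi)\in i_{\Fm{\Sigma}{Y}}(w)$, and thus $\varphi\in\iota^{-1}(i_{\Fm{\Sigma}{Y}}(w))=i_{\Fm{\Sigma}{X}}(z)$.

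The only genuine obstacle is conservativity, and it dissolves once one observes that $\iota$ is a split monomorphism in the category of $\Sigma$-structures---this is precisely where the freeness of the formula algebras on their variable sets is used. Functoriality of $G$ then converts the resulting retraction-section pair into one between $G(\Fm{\Sigma}{X})$ and $G(\Fm{\Sigma}{Y})$, making the surjectivity of $G(\iota)$ on which the argument pivots entirely formal.
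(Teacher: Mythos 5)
Your proof is correct and follows essentially the same route as the paper's: both directions hinge on the naturality square for the inclusion $\Fm{\Sigma}{X}\hookrightarrow\Fm{\Sigma}{Y}$, and conservativity is obtained by building a retraction $\Fm{\Sigma}{Y}\to\Fm{\Sigma}{X}$ from freeness and applying contravariant functoriality of $G$. Your use of the ``for every $z\in G(A)$'' description of the consequence relation is just a reformulation of the paper's computation with the closure operators $i\circ j$, so the two arguments differ only in presentation.
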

\begin{prooof}
 Denote the inclusion by $\sigma \colon \Fm{\Sigma}{X} \to \Fm{\Sigma}{Y}$. 
 %Choose a map $\tilde{\tau} \colon Y \to X$ such that $\tau|_{X}=\textrm{id}_X$.
 { Choose a map  $\tilde{\tau} \colon {Y} \to \Fm{\Sigma}{X}$ such that $\tilde{\tau}|_{{X}}=\id_{{X}}$,}\footnote{Remember that our sets of variables are infinite, in particular $X \neq \emptyset$.}
 %ELIMINAR ISTO, POIS ESTAMOS ASSUMNINDO $X$ INFINITO {\color{blue}--CONCORDO}. If  $\Fm{\Sigma}{X} = \emptyset$, i.e. if $X = \Sigma_n = \emptyset$, then the unique map $\Fm{\Sigma}{X} \to \Fm{\Sigma}{Y}$ is, vacuously,  a conservative translation.}}
 thus the induced homomorphism
  $\tau  \colon \Fm{\Sigma}{Y} \to \Fm{\Sigma}{X}$ is a left inverse of $\sigma$, i.e. $\tau \circ \sigma = \textrm{id}_{\Fm{\Sigma}{X}}$.
  We then have the following diagram (which is commutative if one deletes the $j_X, j_Y$):

$$\xymatrix{
\Fm{\Sigma}{X} \ar@{^(->}[d]_\sigma & G(\Fm{\Sigma}{X}) \ar@<.5ex>[r]^{i_X} & \wp(\Fm{\Sigma}{X}) \ar@<.5ex>[l]^{j_X} \\
\Fm{\Sigma}{Y} \ar@{->>}[d]_\tau & G(\Fm{\Sigma}{Y}) \ar@<.5ex>[r]^{i_Y} \ar[u]^{G(\sigma)} & \wp(\Fm{\Sigma}{Y})  \ar[u]_{\sigma^{-1}}  \ar@<.5ex>[l]^{j_Y} \\
\Fm{\Sigma}{X} & G(\Fm{\Sigma}{X}) \ar@{=}@/^3pc/[uu]^{\textrm{id}} \ar@<.5ex>[r]^{i_X} \ar[u]^{G(\tau)} & \wp(\Fm{\Sigma}{X}) \ar@<.5ex>[l]^{j_X} \ar[u]_{\tau^{-1}}  \ar@{=}@/_3pc/[uu]_{\textrm{id}}
}$$

Note that $\sigma^{-1}(Z) = Z \cap \Fm{\Sigma}{X}$.

\medskip

Abbreviating $l_X:=\Log{X}(G,i)$ and $l_Y:=\Log{Y}(G,i)$, we need to show that for $\Gamma \cup \{\varphi\} \subseteq \Fm{\Sigma}{X}$ we have
\[\Gamma\vdash_{l_X}\varphi\ \ \text{iff} \ \ \Gamma\vdash_{l_Y}\varphi.\]

\medskip

``$\Rightarrow$'' Suppose that $\Gamma\vdash_{l_X}\varphi$. We need to show $\Gamma\vdash_{l_Y}\varphi$, i.e. $\varphi \in i_Yj_Y(\Gamma)$. Since $\varphi \in \Fm{\Sigma}{X}$, this is equivalent to $$\varphi \in i_Yj_Y(\Gamma) \cap \Fm{\Sigma}{X} = \sigma^{-1} i_Y j_Y(\Gamma) = i_X G(\sigma) j_Y(\Gamma),$$ where the last equality holds because of naturality.

Since $\Gamma \subseteq i_Yj_Y(\Gamma)$, and again since $\Gamma \subseteq \Fm{\Sigma}{X}$, we have $\Gamma \subseteq i_Yj_Y(\Gamma) \cap \Fm{\Sigma}{X} = i_X G(\sigma) j_Y(\Gamma)$.
  Since $\Gamma\vdash_{l_X}\varphi$, every set in the image of $i_X$ that contains $\Gamma$ also contains $\varphi$, so $\varphi \in i_X G(\sigma) j_Y(\Gamma) = i_Yj_Y(\Gamma) \cap \Fm{\Sigma}{X} \subseteq i_Yj_Y(\Gamma)$.
  
  \medskip

``$\Leftarrow$'' 
Suppose that $\Gamma\vdash_{l_Y}\varphi$. We know that $\Gamma \subseteq i_X j_X (\Gamma)$. Since %$\tau|_{X}=\textrm{id}_X$
$\tau \circ \sigma = \textrm{id}_{\Fm{\Sigma}{X}}$ this implies $\Gamma \subseteq \tau^{-1} i_X j_X (\Gamma) = i_Y G(\tau) j_X (\Gamma)$ (the equality again coming from the naturality square). Since $\Gamma\vdash_{l_Y}\varphi$, every set in the image of $i_Y$ that contains $\Gamma$ also contains $\varphi$. As $\varphi \in \Fm{\Sigma}{X}$, it follows that 
$$\begin{array}{rcl}
 \varphi \in i_Y ( G(\tau) ( j_X (\Gamma) )) \cap \Fm{\Sigma}{X} &=& \sigma^{-1} ( i_Y ( G(\tau) ( j_X (\Gamma)))) \\
 &=& i_X ( G(\sigma) ( G(\tau) ( j_X(\Gamma)))) = i_X ( j_X(\Gamma)).
\end{array}
$$
\end{prooof}

\begin{Cor}\label{CorollaryKappaFilterPairPresentingLogicOfCardinalityKappaAlsoYieldsNaturalExtensions}
 Let $(G,i)$ be a $\kappa$-filter pair, $X$ a set, and suppose that \linebreak $\card\, \Log{X}(G,i) = \kappa$. Then for every $Y \supseteq X$ the logic $\Log{Y}(G,i)$ is a natural extension of $\Log{X}(G,i)$.
\end{Cor}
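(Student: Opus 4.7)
The plan is to verify the two defining conditions of a natural extension of $\Log{X}(G,i)$ to $\Fm{\Sigma}{Y}$: conservativity of the canonical inclusion and equality of cardinalities. The first is exactly the content of Theorem \ref{TheoremFilterPairsGiveConservativeExtensions} applied to the inclusion $\Fm{\Sigma}{X} \hookrightarrow \Fm{\Sigma}{Y}$; the inclusion is a conservative translation from $\Log{X}(G,i)$ to $\Log{Y}(G,i)$. So only the statement $\card \Log{Y}(G,i) = \kappa$ needs attention.

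For the upper bound $\card \Log{Y}(G,i) \leq \kappa$, I would simply invoke Proposition \ref{PropLogicsFromFilterPairs}: since $(G,i)$ is a $\kappa$-filter pair, \emph{every} logic of the form $\Log{Z}(G,i)$ is a $\kappa$-logic, in particular $\Log{Y}(G,i)$ is.

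For the lower bound I argue by contradiction: suppose $\card \Log{Y}(G,i) = \lambda$ with $\lambda < \kappa$. Take any $\Gamma \cup \{\varphi\} \subseteq \Fm{\Sigma}{X}$ with $\Gamma \vdash_{\Log{X}(G,i)} \varphi$. The translation being conservative (in particular, preserving derivability) gives $\Gamma \vdash_{\Log{Y}(G,i)} \varphi$, and by $\lambda$-arity of $\Log{Y}(G,i)$ there exists $\Gamma' \subseteq \Gamma$ with $|\Gamma'| < \lambda$ and $\Gamma' \vdash_{\Log{Y}(G,i)} \varphi$. Since $\Gamma' \cup \{\varphi\} \subseteq \Fm{\Sigma}{X}$, applying the reverse implication of conservativity yields $\Gamma' \vdash_{\Log{X}(G,i)} \varphi$. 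This shows $\Log{X}(G,i)$ is $\lambda$-ary for some $\lambda < \kappa$, contradicting the hypothesis $\card \Log{X}(G,i) = \kappa$.

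There is no genuine obstacle: the result is a direct bookkeeping consequence of the two earlier results, with conservativity doing the work of transferring the cardinality lower bound from $\Log{X}(G,i)$ up to $\Log{Y}(G,i)$.
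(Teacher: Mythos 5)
Your proposal is correct and follows essentially the same route as the paper: conservativity from Theorem \ref{TheoremFilterPairsGiveConservativeExtensions}, the upper bound $\card \Log{Y}(G,i)\leq\kappa$ from the filter pair being a $\kappa$-filter pair, and the lower bound transferred from $\Log{X}(G,i)$ via conservativity. The only difference is cosmetic: you phrase the lower-bound step as a proof by contradiction, while the paper argues directly that every witness of non-$\rho$-arity over $\Fm{\Sigma}{X}$ remains a witness over $\Fm{\Sigma}{Y}$.
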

\begin{prooof}
 We know from Theorem \ref{TheoremFilterPairsGiveConservativeExtensions} that $\Log{Y}(G,i)$ is a conservative extension of $\Log{X}(G,i)$. Since $\Log{Y}(G,i)$ is presented by a $\kappa$-filter pair, we have  $\card\, \Log{Y}(G,i) \leq \kappa$. Finally, since by hypothesis $\card\, \Log{X}(G,i) = \kappa$, for every cardinal $\rho < \kappa$ there are formulas $\Gamma \cup \{\varphi\} \subseteq \Fm{\Sigma}{X}$ such that $\Gamma \vdash_{\Log{X}(G,i)} \varphi$ and for no subset $\Gamma' \subseteq \Gamma$ with $|\Gamma'|<\rho$ one has $\Gamma' \vdash_{\Log{X}(G,i)} \varphi$. As $\Log{Y}(G,i)$ is a conservative extension of $\Log{X}(G,i)$, we also have for no subset $\Gamma' \subseteq \Gamma$ with $|\Gamma'|<\rho$ that $\Gamma' \vdash_{\Log{Y}(G,i)} \varphi$, showing that $\card\, \Log{Y}(G,i) \geq \kappa$, and hence $\card\, \Log{Y}(G,i) = \kappa$.
\end{prooof}

\begin{Cor}[P{\v r}enosil \cite{Prenosil}]
 Let $X,Y$ be sets, $X \subseteq Y$. Then every logic over $\Fm{\Sigma}{X}$ has a natural extension to $\Fm{\Sigma}{Y}$.
\end{Cor}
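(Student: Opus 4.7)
The plan is to chain the previous three main results of the section to reduce this corollary to essentially one line of work. Given a logic $l = (\Fm{\Sigma}{X}, \vdash)$ of (regular) cardinality $\kappa$ (regularity being the standing assumption of this section), I would first invoke Theorem \ref{FilterPairsFromLogics} to form the canonical filter pair $\FilPa(l)$, which is a $\kappa$-filter pair.

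Next, I would use Theorem \ref{TheoremEveryLogicComesFromAFilterPair} to recognize that $\Log{X}(\FilPa(l)) = l$; in particular $\card \Log{X}(\FilPa(l)) = \kappa$, so the cardinality hypothesis of the previous corollary is met.

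Finally, I would apply Corollary \ref{CorollaryKappaFilterPairPresentingLogicOfCardinalityKappaAlsoYieldsNaturalExtensions} to the $\kappa$-filter pair $\FilPa(l)$ and the inclusion $X \subseteq Y$, concluding that $\Log{Y}(\FilPa(l))$ is a natural extension of $\Log{X}(\FilPa(l)) = l$ to $\Fm{\Sigma}{Y}$.

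There is no real obstacle: all the substantive content lives in the preceding results. The only point worth flagging is the hidden regularity assumption — if $l$ had singular cardinality, $\FilPa(l)$ would only be guaranteed to be a $reg(\kappa)$-filter pair (by Theorem \ref{kappa-filter-logic}), and the construction would produce a conservative extension of cardinality at most $\kappa^+$ rather than a genuine natural extension, which is precisely the caveat discussed in Remark \ref{ObsPorqueTratamosDoCasoRegular}.
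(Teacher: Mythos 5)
Your proof is correct and follows essentially the same route as the paper: the paper likewise invokes Theorem \ref{TheoremEveryLogicComesFromAFilterPair} (that the canonical filter pair presents $l$) and then applies Corollary \ref{CorollaryKappaFilterPairPresentingLogicOfCardinalityKappaAlsoYieldsNaturalExtensions}. Your extra remark on the singular case matches the paper's discussion in Remark \ref{ObsPorqueTratamosDoCasoRegular} and Corollary \ref{CorExtNatCasoSingular}.
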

\begin{prooof}
 We know from Theorem \ref{TheoremEveryLogicComesFromAFilterPair} that every logic of cardinality $\kappa$ can be presented by a $\kappa$-filter pair. Hence the claim follows from Corollary \ref{CorollaryKappaFilterPairPresentingLogicOfCardinalityKappaAlsoYieldsNaturalExtensions}.
\end{prooof}
\medskip

Our results so far, for a logic \emph{singular} cardinality, do not give a natural extension, but the next best thing:

\begin{Cor}\label{CorExtNatCasoSingular}
Let $X,Y$ be sets, $X \subseteq Y$. Then every logic of \emph{singular} cardinality $\kappa$ over $\Fm{\Sigma}{X}$ has a conservative extension to $\Fm{\Sigma}{Y}$ of cardinality at most $\kappa^+$.
\end{Cor}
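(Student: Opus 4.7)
The plan is to replay the argument used for the previous corollary (existence of natural extensions for regular cardinality) but with the regularization $reg(\kappa)$ in place of $\kappa$. Since $\kappa$ is singular, Definition \ref{DefRegularizationOfCardinal} gives $reg(\kappa) = \kappa^+$, so the conclusion loses exactly one cardinal step.

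More concretely, I would start from the canonical filter pair $\FilPa(l)$ of the given logic $l=(\Fm{\Sigma}{X},\vdash)$. Theorem \ref{kappa-filter-logic} is stated precisely to cover the singular case: it does not require regularity of $\kappa$, and it guarantees that $\FilPa(l)$ is a $reg(\kappa)$-filter pair, i.e. a $\kappa^+$-filter pair. Because $\kappa^+$ is itself regular, the rest of the machinery of Section 2 applies to it without modification.

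Next I would apply Proposition \ref{PropLogicsFromFilterPairs} to the $\kappa^+$-filter pair $\FilPa(l)$ at the larger set of variables $Y$, obtaining a logic $\Log{Y}(\FilPa(l))$ on $\Fm{\Sigma}{Y}$ of cardinality at most $\kappa^+$. Theorem \ref{TheoremFilterPairsGiveConservativeExtensions}, whose proof makes no use of regularity of the cardinal associated with the filter pair beyond what is already encoded in the filter pair axioms, then says that the inclusion $\Fm{\Sigma}{X}\hookrightarrow\Fm{\Sigma}{Y}$ is a conservative translation $\Log{X}(\FilPa(l))\to\Log{Y}(\FilPa(l))$. Finally, Theorem \ref{TheoremEveryLogicComesFromAFilterPair} identifies $\Log{X}(\FilPa(l))$ with $l$ itself, so $\Log{Y}(\FilPa(l))$ is a conservative extension of $l$ to $\Fm{\Sigma}{Y}$ of cardinality at most $\kappa^+$, which is exactly the claim.

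There is no real obstacle here; the whole point of the statement is that the bookkeeping of Remark \ref{ObsPorqueTratamosDoCasoRegular} translates, through $reg(\kappa)=\kappa^+$, into the ``next best thing'' once $\kappa$ is singular. The only slightly subtle point worth flagging in the write-up is that the standing regularity assumption of Section 3 must be temporarily suspended, exactly as in Theorem \ref{kappa-filter-logic}, and that we rely on the fact that a conservative extension can have cardinality strictly larger than that of the original logic; the conclusion does not assert a natural extension, only a conservative extension with the minimal possible cardinality blow-up.
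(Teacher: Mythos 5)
Your proposal is correct and follows essentially the same route as the paper: the paper's proof also invokes Theorem \ref{kappa-filter-logic} to present $l$ by a $\kappa^+$-filter pair (the canonical one, with $reg(\kappa)=\kappa^+$ since $\kappa$ is singular), then gets conservativity from Theorem \ref{TheoremFilterPairsGiveConservativeExtensions} and the cardinality bound from the fact that a logic arising from a $\kappa^+$-filter pair has cardinality at most $\kappa^+$. Your additional remarks about suspending the regularity convention and about Theorem \ref{TheoremEveryLogicComesFromAFilterPair} supplying $\Log{X}(\FilPa(l))=l$ only make explicit what the paper leaves implicit.
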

\begin{prooof}
By Thm. \ref{kappa-filter-logic} the logic can be presented by a $\kappa^+$-filter pair $(G,i)$. By Thm. \ref{TheoremFilterPairsGiveConservativeExtensions} the logic $\Log{Y}(G,i)$ is a conservative extension, and, coming from a $\kappa^+$-filter pair, it has cardinality $<\kappa^+$. 
\end{prooof}

\medskip

We proceed to pin down the precise relationships between the several (tentative) constructions of natural extensions. As Cintula and Noguera proved, the only thing that can fail with Shoesmith-Smiley's tentative definition of a natural extension is idempotence. Next we show, in Proposition \ref{PropOurNaturalExtensionIsIdempotentHullOfShoesmithSmileysRelation} below, that if one takes Shoesmith-Smiley's relation ${\vdash}^{\textrm{\tiny SS}}$ and forces it to be idempotent, one obtains our consequence relation on $\Log{Y}(G,i)$. To show this we review some facts about idempotent hulls.

\begin{construction}
  Consider a set $M$ and an increasing, monotonous operation $E \colon \wp(M) \to \wp(M)$. There is a smallest idempotent operation $C \colon \wp(M) \to \wp(M)$ which is bigger than $E$ in the setwise order, i.e. satisfying $E(X) \subseteq C(X)$ for all $X \in \wp(M)$. One can construct it by iterating the operation $E$ until nothing changes anymore:

For an ordinal number $\alpha$ we define inductively $E^{\alpha+1}(X):=E(E^{\alpha}(X))$ for a successor ordinal, and $E^\alpha(X):=\bigcup_{\beta < \alpha}E^\beta(X)$ for a limit ordinal $\alpha$. Since $E$ is monotonous and increasing, we have $E^\alpha(X) \subseteq E^{\beta}(X)$ whenever $\alpha<\beta$. It is also clear that each $E^\alpha$ is itself monotonous and increasing. Choose a limit ordinal $\gamma$ with $|\gamma|>|M|$. Since $\wp(M)$ does not contain chains of \emph{strict} inclusions indexed by the ordinal $\gamma$, we have $E^\gamma(X)=E^{\gamma+1}(X)$. Now we define the operator $C \colon \wp(M) \to \wp(M)$ by $C(X):=E^\gamma(X) = \bigcup_{\alpha < \gamma} E^\alpha(X)$. We have $E(C(X))=C(X)$, hence $E^\alpha(C(X))=C(X)$ for all ordinals $\alpha$, and hence $C(C(X))= \bigcup_{\alpha < \gamma} E^\alpha(C(X)) = \bigcup_{\alpha < \gamma} C(X) = C(X)$. So $C$ is an increasing, monotonous and idempotent operator containing $E$. Any other such operator needs to contain all iterations of $E$ and hence also $C$, so $C$ is the smallest such operator.

The operator $C$ just constructed is called the \emph{idempotent hull} of $E$.
\end{construction}

\medskip

Call a subset $Y \subseteq M$ \emph{$E$-closed} if, whenever $\varphi \in E(Y)$, one also has $\varphi \in Y$, i.e. if $E(Y)=Y$.

\begin{Lem}\label{LemmaDescriptionIdempotentHull}
Let $X \subseteq M$. Then $C(X)$ is the smallest $E$-closed subset of $M$ containing $X$. 
\end{Lem}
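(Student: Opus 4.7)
The plan is to verify the two defining properties of ``smallest $E$-closed set containing $X$'' separately: (i) $C(X)$ itself is $E$-closed and contains $X$, and (ii) every $E$-closed set containing $X$ also contains $C(X)$.

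For (i), containment $X \subseteq C(X)$ is immediate because $C$ is increasing (in fact $X = E^0(X) \subseteq \bigcup_{i<I}E^i(X) = C(X)$). The $E$-closedness $E(C(X)) = C(X)$ was essentially already established in the construction: by choice of the limit ordinal $I$ with $|I|>|M|$, the chain $(E^i(X))_{i\le I}$ must stabilize at or before stage $I$, so $E^{I+1}(X) = E^I(X)$, i.e.\ $E(C(X)) = C(X)$. So I would just record this observation, referring back to the construction.

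For (ii), I would prove by transfinite induction on $i$ that $E^i(X) \subseteq Y$ for every ordinal $i$, whenever $Y$ is an $E$-closed subset of $M$ with $X \subseteq Y$. The base case $E^0(X) = X \subseteq Y$ holds by assumption. For a successor step $i+1$, assuming $E^i(X) \subseteq Y$, monotonicity of $E$ gives $E^{i+1}(X) = E(E^i(X)) \subseteq E(Y) = Y$, using $E$-closedness of $Y$ in the last equality. For a limit step, $E^i(X) = \bigcup_{j<i} E^j(X) \subseteq Y$ by the inductive hypothesis applied to each $j<i$. Taking $i=I$ yields $C(X) = E^I(X) \subseteq Y$, which is exactly what we need.

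There is really no obstacle here: the argument is a textbook transfinite induction and all the ingredients (monotonicity, the stabilization of the ordinal-indexed chain, and the definition of $E$-closedness) have already been set up explicitly in the preceding construction. The only thing to be slightly careful about is to spell out the three cases of the transfinite induction so that the use of monotonicity (successor case) and the fact that $Y$ is $E$-closed (rather than merely containing $E(X)$) are both visible.
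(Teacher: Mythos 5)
Your proof is correct and takes essentially the same route as the paper: the paper also notes $E(C(X))=C(X)$ from the construction and derives $C(X)=\bigcup_{i<I}E^i(X)\subseteq\bigcup_{i<I}E^i(Y)=Y$ from the monotonicity of the iterates $E^i$ together with $E^i(Y)=Y$ for $E$-closed $Y$. Your explicit transfinite induction merely spells out these two facts, which the paper cites from its preceding construction.
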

\begin{prooof} $C(X)$ is $E$-closed by the observation $E(C(X))=C(X)$ from above. If $Y$ is another $E$-closed set containing $X$, then % 
$C(X)=\bigcup_{\alpha < \gamma}E^\alpha(X)$ $\subseteq \bigcup_{\alpha < \gamma}E^\alpha(Y) = Y,$ where the inclusion comes from the monotonicity of the operators $E^\alpha$. \end{prooof}

\begin{Prop}\label{PropOurNaturalExtensionIsIdempotentHullOfShoesmithSmileysRelation}
Let $E, C \colon \wp(\Fm{\Sigma}{Y}) \to \wp(\Fm{\Sigma}{Y})$ be the operations \linebreak  given by 
$E(\Gamma):=\{\varphi \, \mid \, \Gamma \vdash^{\textrm{\tiny SS}} \varphi \}$ and $C(\Gamma):=\{\varphi \, \mid \, \Gamma \vdash_{\Log{Y}(\FilPa(l))} \varphi \}$, respectively. Then the operation $C$ is the idempotent hull of $E$.
\end{Prop}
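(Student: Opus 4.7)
The plan is to use Lemma \ref{LemmaDescriptionIdempotentHull}, which characterizes the idempotent hull of an increasing, monotonous operator $E$ pointwise as the smallest $E$-closed subset containing the input. Thus it suffices to show that $C(\Gamma)$ is precisely the smallest $E$-closed subset of $\Fm{\Sigma}{Y}$ containing $\Gamma$.

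First I would verify that the idempotent hull construction applies, i.e., that $E$ is increasing and monotonous. Both are immediate: $\vdash^{\textrm{\tiny SS}}$ is reflexive and monotonous (as recalled just before the proposition, and established in \cite{CintulaNoguera}). Next I would check the inclusion $E \leq C$ pointwise, so that $C$ is one candidate for an idempotent operator containing $E$. This amounts to the inclusion $\vdash^{\textrm{\tiny SS}} \subseteq \vdash_{\Log{Y}(\FilPa(l))}$, which holds because $\vdash_{\Log{Y}(\FilPa(l))}$ is a consequence relation on $\Fm{\Sigma}{Y}$ that extends $\vdash_l$ on $\Fm{\Sigma}{X}$ (by Theorem \ref{TheoremFilterPairsGiveConservativeExtensions}) and is structural, hence closed under substitution instances of the rules used to define $\vdash^{\textrm{\tiny SS}}$. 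Also, $C$ is automatically idempotent since it is the closure operator of a consequence relation.

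The heart of the argument is to identify the $E$-closed subsets with the $C$-closed subsets. The $C$-closed subsets are, by definition, the theories of $\Log{Y}(\FilPa(l))$, which by the proof of Theorem \ref{TheoremEveryLogicComesFromAFilterPair} (applied now to the $\Sigma$-structure $\Fm{\Sigma}{Y}$ rather than $\Fm{\Sigma}{X}$, using Theorem \ref{FilterPairsFromLogics} for the filter pair $\FilPa(l)$) are exactly the $l$-filters on $\Fm{\Sigma}{Y}$. On the other hand, by unpacking the definition of $\vdash^{\textrm{\tiny SS}}$, a set $T \subseteq \Fm{\Sigma}{Y}$ is $E$-closed iff for every $\Gamma' \cup \{\varphi'\} \subseteq \Fm{\Sigma}{X}$ with $\Gamma' \vdash_l \varphi'$ and every homomorphism $v \colon \Fm{\Sigma}{X} \to \Fm{\Sigma}{Y}$, the containment $v(\Gamma') \subseteq T$ implies $v(\varphi') \in T$ — which is precisely the definition of an $l$-filter on $\Fm{\Sigma}{Y}$. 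So the two notions of closedness coincide.

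Combining these steps, the smallest $E$-closed subset containing $\Gamma$ coincides with the smallest $l$-filter containing $\Gamma$, which in turn equals $C(\Gamma)$; by Lemma \ref{LemmaDescriptionIdempotentHull} this is the idempotent hull of $E$ applied to $\Gamma$, proving the proposition. The main obstacle is simply the careful unraveling of the definition of the Shoesmith–Smiley relation to match it with the filter condition on $\Fm{\Sigma}{Y}$; everything else is a direct appeal to previously established results in the excerpt.
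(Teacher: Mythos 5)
Your proposal is correct and follows essentially the same route as the paper: identify the $E$-closed sets with the $l$-filters on $\Fm{\Sigma}{Y}$ (the closed sets of $C$), note that $C(\Gamma)$ is the smallest such set containing $\Gamma$, and conclude by Lemma \ref{LemmaDescriptionIdempotentHull}. The extra verification that $E \leq C$ is harmless but not needed once the closed sets are identified.
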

\begin{prooof}
By definition we have that $\varphi \in E(\Gamma)$ iff $\exists \Gamma'\cup\{\varphi'\}\subseteq \Fm{\Sigma}{X}$ and $v\colon \Fm{\Sigma}{X} \to \Fm{\Sigma}{Y}$ such that $\Gamma ' \vdash \varphi '$, $v(\Gamma')\subseteq \Gamma$ and $v(\varphi ')=\varphi$.

The operator $C$ on the other hand is the the closure operator of the logic $\Log{Y}(\FilPa(l))$, and thus by definition associates to a set $Z \subseteq \Fm{\Sigma}{Y}$ the smallest $l$-filter containing $Z$.

In other words, by definition of $l$-filter, $\varphi \in C(\Gamma)$ means that $\varphi$ is contained in the smallest set $Z$ of formulas {{ on the variables $Y$}}  that contains $\Gamma$ and that, whenever there are $\Gamma' \cup \{\varphi'\} \subseteq \Fm{\Sigma}{X}$ s.t. $\Gamma' \vdash_{l_X} \varphi'$ and a morphism $v \colon \Fm{\Sigma}{X} \to \Fm{\Sigma}{Y}$ such that $v(\Gamma') \subseteq Z$ then also $v(\varphi') \in Z$. The latter condition is exactly the condition of being $E$-closed, hence the claim follows from Lemma \ref{LemmaDescriptionIdempotentHull}.
\end{prooof}

\begin{Lem}\label{LemmaInclusionShoesmithSmileyMinimalNatExt}
There is an inclusion  $ {\vdash}^{\textrm{\tiny SS}} \ \subseteq \ {\vdash}^-$.
\end{Lem}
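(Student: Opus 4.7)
The plan is to unpack the definition of $\vdash^{\textrm{\tiny SS}}$ and then apply the defining properties of $\vdash^-$ — namely that it extends $\vdash_l$ on formulas in $\Fm{\Sigma}{X}$, plus structurality and monotonicity, both of which it satisfies since it is a consequence relation on $\Fm{\Sigma}{Y}$.

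Suppose $\Gamma \vdash^{\textrm{\tiny SS}} \varphi$. By definition, there exist $\Gamma' \cup \{\varphi'\} \subseteq \Fm{\Sigma}{X}$, a map $v \colon X \to \Fm{\Sigma}{Y}$ (and hence the induced $\Sigma$-homomorphism, which we again call $v \colon \Fm{\Sigma}{X} \to \Fm{\Sigma}{Y}$) such that $\Gamma' \vdash_l \varphi'$, $v(\Gamma') \subseteq \Gamma$ and $v(\varphi') = \varphi$. First I would invoke the generating rule: since $\Gamma' \cup \{\varphi'\} \subseteq \Fm{\Sigma}{X}$ and $\Gamma' \vdash_l \varphi'$, the very definition of $\vdash^-$ as the smallest consequence relation containing these pairs gives $\Gamma' \vdash^- \varphi'$.

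Next I would promote $v$ to an endomorphism of $\Fm{\Sigma}{Y}$. Since $X \subseteq Y$, choose any extension $\tilde v \colon Y \to \Fm{\Sigma}{Y}$ of $v$ (for instance, $\tilde v(y):=y$ for $y \in Y \setminus X$); this induces a $\Sigma$-endomorphism $\sigma \colon \Fm{\Sigma}{Y} \to \Fm{\Sigma}{Y}$ that agrees with $v$ on the subalgebra $\Fm{\Sigma}{X}$. In particular $\sigma(\Gamma') = v(\Gamma') \subseteq \Gamma$ and $\sigma(\varphi') = v(\varphi') = \varphi$. Now structurality of $\vdash^-$ applied to $\Gamma' \vdash^- \varphi'$ yields $v(\Gamma') \vdash^- \varphi$, and monotonicity (since $v(\Gamma') \subseteq \Gamma$) finally gives $\Gamma \vdash^- \varphi$, as desired.

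There is no real obstacle here: the only mildly delicate step is the move from $v$, which is only defined on $X$, to a genuine substitution on $\Fm{\Sigma}{Y}$ needed to invoke structurality, and this is handled by an arbitrary extension on $Y \setminus X$, which is unproblematic because structurality of $\vdash^-$ is with respect to endomorphisms of $\Fm{\Sigma}{Y}$ and the conclusion of structurality only depends on the action of $\sigma$ on formulas actually occurring in $\Gamma' \cup \{\varphi'\} \subseteq \Fm{\Sigma}{X}$.
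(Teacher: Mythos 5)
Your proof is correct and follows essentially the same route as the paper's: invoke the generating rule of $\vdash^-$ to get $\Gamma' \vdash^- \varphi'$, extend $v$ from $X$ to an endomorphism of $\Fm{\Sigma}{Y}$, and then apply structurality and monotonicity of $\vdash^-$. The only difference is that you spell out the extension step more carefully than the paper does.
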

\begin{prooof}
Remember the definition of ${\vdash}^{-}$ as the smallest consequence relation on $\Fm{\Sigma}{Y}$ satisfying the rules $\Gamma \vdash^{-} \varphi$ whenever $\Gamma \cup \{\varphi\} \subseteq \Fm{\Sigma}{X}$ and $\Gamma \vdash_l \varphi$.

Let $\Gamma \vdash^{\textrm{\tiny SS}} \varphi$. By definition there are $\Gamma' \cup \varphi' \subseteq \Fm{\Sigma}{X}$ and  $v \colon X \to \Fm{\Sigma}{Y}$ such that $v(\Gamma') \subseteq \Gamma, v(\varphi')=\varphi$ and $\Gamma' \vdash_{l} \varphi'$.

By definition $\Gamma' \vdash_{l} \varphi'$ implies $\Gamma' \vdash^{-} \varphi'$. Choose any extension $\tilde{v}$ of $v$ to all of $Y$. Then $\tilde{v}(\Gamma')=v(\Gamma)\subseteq \Gamma$ and $\tilde{v}(\varphi')={v}(\varphi')=\varphi$ and since ${\vdash}^{-}$ is structural and monotonous, we have $\Gamma \vdash^{-} \varphi$.
\end{prooof}

\medskip

With this we can start tying together all the different relations considered in this section.

\begin{Cor}\label{CorIdempotentHullOfShoesmithSmileyIsMinimalNatExt}
 The idempotent hull of the Shoesmith-Smiley relation is the minimal natural extension ${\vdash}^-$.
\end{Cor}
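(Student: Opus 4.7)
The plan is to reduce the statement to the identification $\vdash_{\Log{Y}(\FilPa(l))}\ =\ \vdash^-$ via Proposition \ref{PropOurNaturalExtensionIsIdempotentHullOfShoesmithSmileysRelation}, which already tells us that the idempotent hull of $\vdash^{\textrm{\tiny SS}}$ is exactly $\vdash_{\Log{Y}(\FilPa(l))}$. Once this identification is in place, the corollary follows immediately. So the real work is to sandwich $\vdash_{\Log{Y}(\FilPa(l))}$ and $\vdash^-$ between each other.

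For the inclusion $\vdash^-\ \subseteq\ \vdash_{\Log{Y}(\FilPa(l))}$ I would use the minimality built into the definition of $\vdash^-$. By Corollary \ref{CorollaryKappaFilterPairPresentingLogicOfCardinalityKappaAlsoYieldsNaturalExtensions}, the relation $\vdash_{\Log{Y}(\FilPa(l))}$ is a (natural, in particular conservative) extension of $l$ to $\Fm{\Sigma}{Y}$, and it is a bona fide consequence relation. In particular, whenever $\Gamma\cup\{\varphi\}\subseteq \Fm{\Sigma}{X}$ and $\Gamma\vdash_l\varphi$, conservativity gives $\Gamma\vdash_{\Log{Y}(\FilPa(l))}\varphi$. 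So $\vdash_{\Log{Y}(\FilPa(l))}$ is a consequence relation on $\Fm{\Sigma}{Y}$ satisfying all the seed rules that define $\vdash^-$, and by the minimality in the definition of $\vdash^-$ the inclusion follows.

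For the reverse inclusion $\vdash_{\Log{Y}(\FilPa(l))}\ \subseteq\ \vdash^-$ I would invoke the universal property of the idempotent hull established in the discussion preceding Lemma \ref{LemmaDescriptionIdempotentHull}. By Lemma \ref{LemmaInclusionShoesmithSmileyMinimalNatExt} we have $\vdash^{\textrm{\tiny SS}}\ \subseteq\ \vdash^-$, and $\vdash^-$ is a consequence relation, hence its associated operator $\Gamma\mapsto\{\varphi\mid\Gamma\vdash^-\varphi\}$ is an increasing, monotonous, idempotent operator on $\wp(\Fm{\Sigma}{Y})$ that dominates the operator $E$ of Proposition \ref{PropOurNaturalExtensionIsIdempotentHullOfShoesmithSmileysRelation}. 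Since the idempotent hull is the \emph{smallest} such operator, it is contained in the operator of $\vdash^-$, giving the desired inclusion.

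The only subtle point I foresee is making sure that the notion of ``idempotent hull'' used in Proposition \ref{PropOurNaturalExtensionIsIdempotentHullOfShoesmithSmileysRelation} really matches the one whose universal property we are invoking: both refer to the same construction in the construction preceding Lemma \ref{LemmaDescriptionIdempotentHull}, so this is routine but worth spelling out in one sentence. Combining the two inclusions gives $\vdash_{\Log{Y}(\FilPa(l))}\ =\ \vdash^-$, and then the corollary is immediate from Proposition \ref{PropOurNaturalExtensionIsIdempotentHullOfShoesmithSmileysRelation}.
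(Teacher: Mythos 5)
Your proof is correct, and its skeleton is the same as the paper's: identify the idempotent hull of $\vdash^{\textrm{\tiny SS}}$ with $\vdash_{\Log{Y}(\FilPa(l))}$ via Proposition \ref{PropOurNaturalExtensionIsIdempotentHullOfShoesmithSmileysRelation}, obtain one inclusion from Lemma \ref{LemmaInclusionShoesmithSmileyMinimalNatExt} together with the minimality (equivalently, monotonicity) of the hull construction, and obtain the other from a minimality property of $\vdash^-$. The genuine difference is which minimality of $\vdash^-$ you invoke for $\vdash^-\ \subseteq\ \vdash_{\Log{Y}(\FilPa(l))}$: the paper cites P{\v r}enosil's theorem that $\vdash^-$ is the minimal \emph{natural extension}, and therefore must first observe (via Corollary \ref{CorollaryKappaFilterPairPresentingLogicOfCardinalityKappaAlsoYieldsNaturalExtensions}) that the hull of $\vdash^{\textrm{\tiny SS}}$ is itself a natural extension; you instead use only the \emph{definition} of $\vdash^-$ as the smallest consequence relation containing the rules of $l$, combined with Theorem \ref{TheoremFilterPairsGiveConservativeExtensions} (only the translation direction is needed) and Theorem \ref{TheoremEveryLogicComesFromAFilterPair} to see that $\vdash_{\Log{Y}(\FilPa(l))}$ is a consequence relation containing those rules. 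Your route is slightly more elementary: it bypasses both the cardinality statement of Corollary \ref{CorollaryKappaFilterPairPresentingLogicOfCardinalityKappaAlsoYieldsNaturalExtensions} and P{\v r}enosil's result that $\vdash^-$ is a natural extension, and as a byproduct the resulting identification $\vdash^-\ =\ \vdash_{\Log{Y}(\FilPa(l))}$ then \emph{reproves} that $\vdash^-$ is a natural extension (its cardinality follows afterwards from Corollary \ref{CorollaryKappaFilterPairPresentingLogicOfCardinalityKappaAlsoYieldsNaturalExtensions}) rather than presupposing it; the paper's version is shorter because it leans on the already-cited external result.
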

\begin{prooof}
Apply the idempotent hull construction to both sides of the inclusion of Lemma \ref{LemmaInclusionShoesmithSmileyMinimalNatExt}. Then we obtain an inclusion between consequence relations.

The left hand side becomes a natural extension of the initial logic $l$ by Prop. \ref{PropOurNaturalExtensionIsIdempotentHullOfShoesmithSmileysRelation} (namely the natural extension coming from the canonical filter pair of $l$) and the right hand side does not change. Since the right hand side is the \emph{minimal} natural extension of $l$, we also have the opposite inclusion.
\end{prooof}

\medskip

\begin{Cor}\label{CorOurNatExtensionIsTheMinimalOne}
 The natural extension ${\vdash}_{\Log{Y}(\FilPa(l))}$ of Corollary \ref{CorollaryKappaFilterPairPresentingLogicOfCardinalityKappaAlsoYieldsNaturalExtensions}, obtained from the canonical filter pair of $l$, is the minimal natural extension ${\vdash}^-$.
\end{Cor}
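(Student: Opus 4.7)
The plan is to simply combine the two preceding results: Proposition \ref{PropOurNaturalExtensionIsIdempotentHullOfShoesmithSmileysRelation} identifies $\vdash_{\Log{Y}(\FilPa(l))}$ with the idempotent hull of the Shoesmith--Smiley relation $\vdash^{\textrm{\tiny SS}}$, and Corollary \ref{CorIdempotentHullOfShoesmithSmileyIsMinimalNatExt} identifies that same idempotent hull with P\v{r}enosil's minimal natural extension $\vdash^-$. Chaining the two equalities gives the desired identification.

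Concretely, I would write: let $E$ and $C$ be the increasing monotonous operators on $\wp(\Fm{\Sigma}{Y})$ associated to $\vdash^{\textrm{\tiny SS}}$ and to $\vdash_{\Log{Y}(\FilPa(l))}$ respectively, as in Proposition \ref{PropOurNaturalExtensionIsIdempotentHullOfShoesmithSmileysRelation}. That proposition tells us $C$ is the idempotent hull of $E$. On the other hand, Corollary \ref{CorIdempotentHullOfShoesmithSmileyIsMinimalNatExt} tells us that the idempotent hull of $E$ is precisely the closure operator associated to $\vdash^-$. Hence $C$ is the closure operator associated to $\vdash^-$, and since a consequence relation is determined by its associated closure operator, $\vdash_{\Log{Y}(\FilPa(l))}\ =\ \vdash^-$.

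There is no real obstacle here, since the two ingredients have already been proved; the only thing to do is to observe that the present statement is literally the composition of those two identifications. In particular, no additional structurality, conservativity or cardinality checks are required at this stage, as they are built into the meaning of ``natural extension coming from the canonical filter pair'' (established in Corollary \ref{CorollaryKappaFilterPairPresentingLogicOfCardinalityKappaAlsoYieldsNaturalExtensions}) and into P\v{r}enosil's construction of $\vdash^-$. Thus the proof is essentially a one-line concatenation of Prop.~\ref{PropOurNaturalExtensionIsIdempotentHullOfShoesmithSmileysRelation} and Cor.~\ref{CorIdempotentHullOfShoesmithSmileyIsMinimalNatExt}.
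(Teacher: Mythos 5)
Your proposal is correct and follows exactly the paper's own argument: the paper also proves this corollary as an immediate concatenation of Proposition \ref{PropOurNaturalExtensionIsIdempotentHullOfShoesmithSmileysRelation} and Corollary \ref{CorIdempotentHullOfShoesmithSmileyIsMinimalNatExt}. Nothing further is needed.
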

\begin{prooof}
Immediate from Corollary \ref{CorIdempotentHullOfShoesmithSmileyIsMinimalNatExt} and Proposition \ref{PropOurNaturalExtensionIsIdempotentHullOfShoesmithSmileysRelation}.
\end{prooof}

\medskip

%To summarize, we have the following inclusions $${\vdash}^{\textrm{\tiny {\L}S}} \ \ \subseteq \ \ {\vdash}^{\textrm{\tiny SS}} \ \ \subseteq \ \ {\vdash}_{\Log{Y}(\FilPa(l))} \ \ = \ \ {\vdash}^-    \ \ \subseteq \ \ {\vdash}^+_\kappa$$
We summarize the results so far in the following theorem.

\begin{Teo}\label{TheoremInclusionsBetweenTheRelations}
 Given sets $X \subseteq Y$ of variables and a logic $l=(\Fm{\Sigma}{X}, {\vdash})$ we have the following inclusions between the associated relations
 $${\vdash}^{\textrm{\tiny {\L}S}} \ \ \subseteq \ \ {\vdash}^{\textrm{\tiny SS}} \ \ \subseteq \ \ {\vdash}^-  \ \ = \ \ {\vdash}_{\Log{Y}(\FilPa(l))}  \ \ \subseteq \ \ {\vdash}^+_\kappa, $$
where the second relation is the structural closure of the first one and the third is the idempotent closure of the second one.
 \end{Teo}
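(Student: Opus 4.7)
The plan is essentially one of assembly: every link in the chain has already been proved in the excerpt or in the cited literature, so the proof reduces to stringing the prior results together in the right order.

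First I would establish the leftmost inclusion $\vdash^{\textrm{\tiny \L S}} \subseteq \vdash^{\textrm{\tiny SS}}$ by citing the discussion at the start of Section~3 (from Cintula--Noguera \cite{CintulaNoguera} and P{\v r}enosil \cite{Prenosil}): since any automorphism $v \colon \Fm{\Sigma}{Y} \to \Fm{\Sigma}{Y}$ restricting to give $v(\Gamma')\vdash_l v(\varphi)$ from $\Gamma'\vdash_l \varphi$ can be viewed as a substitution $X \to \Fm{\Sigma}{Y}$, the {\L}o\'s-Suszko pairs are Shoesmith-Smiley pairs. The accompanying claim that $\vdash^{\textrm{\tiny SS}}$ is the structural closure of $\vdash^{\textrm{\tiny \L S}}$ is precisely Proposition~\ref{PropShoesmithSmileyIsStructuralClosureOfLosSuszko}, so this is recorded simply by appealing to it.

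Next I would handle the middle inclusion $\vdash^{\textrm{\tiny SS}} \subseteq \vdash^-$ by invoking Lemma~\ref{LemmaInclusionShoesmithSmileyMinimalNatExt}. The claim that $\vdash^-$ is the idempotent closure of $\vdash^{\textrm{\tiny SS}}$ is exactly the content of Corollary~\ref{CorIdempotentHullOfShoesmithSmileyIsMinimalNatExt}, combined with Proposition~\ref{PropOurNaturalExtensionIsIdempotentHullOfShoesmithSmileysRelation}, which identifies $\vdash_{\Log{Y}(\FilPa(l))}$ as the idempotent hull of $E$. The equality $\vdash^- = \vdash_{\Log{Y}(\FilPa(l))}$ in the chain is then Corollary~\ref{CorOurNatExtensionIsTheMinimalOne}.

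Finally, for the rightmost inclusion $\vdash_{\Log{Y}(\FilPa(l))} \subseteq \vdash^+_\kappa$, I would use that by Corollary~\ref{CorollaryKappaFilterPairPresentingLogicOfCardinalityKappaAlsoYieldsNaturalExtensions} the left-hand side is a natural extension of $l$, while by P{\v r}enosil \cite[Cor.~6]{Prenosil} $\vdash^+_\kappa$ is the \emph{maximal} natural extension; being ordered by deductive strength, any natural extension must sit below $\vdash^+_\kappa$. Combining these four observations in sequence yields the stated chain of inclusions, together with the two structural/idempotent-closure identifications.

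The only step that requires genuine content is the identification of $\vdash^-$ with $\vdash_{\Log{Y}(\FilPa(l))}$, and that has been done separately in Corollary~\ref{CorOurNatExtensionIsTheMinimalOne}; everything else is citation management, so I do not anticipate any substantive obstacle at this point.
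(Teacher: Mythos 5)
Your proposal is correct and follows essentially the same route as the paper's own proof: both assemble the chain by citing Proposition \ref{PropShoesmithSmileyIsStructuralClosureOfLosSuszko}, Lemma \ref{LemmaInclusionShoesmithSmileyMinimalNatExt}, Proposition \ref{PropOurNaturalExtensionIsIdempotentHullOfShoesmithSmileysRelation}, Corollaries \ref{CorIdempotentHullOfShoesmithSmileyIsMinimalNatExt} and \ref{CorOurNatExtensionIsTheMinimalOne}, and P{\v r}enosil's minimality/maximality results for the outer inclusions. The only cosmetic difference is that the paper cites \cite[Thm.~17]{Prenosil} directly for the first inclusion where you sketch the automorphism-as-substitution argument, which is equivalent.
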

\begin{prooof}
The first inclusion has been noted in \cite[Thm. 17]{Prenosil}, the statement about structural closure is Proposition \ref{PropShoesmithSmileyIsStructuralClosureOfLosSuszko}. The second inclusion is Lemma \ref{LemmaInclusionShoesmithSmileyMinimalNatExt} and the statement about the idempotent hull is Corollary \ref{CorIdempotentHullOfShoesmithSmileyIsMinimalNatExt}. The equality is Corollary \ref{CorOurNatExtensionIsTheMinimalOne}. The final inclusion follows from P{\v r}enosil's result that ${\vdash}^-$ is the minimal and ${\vdash}^+_\kappa$ the maximal natural extension. \cite[Prop. 7, Cor. 6]{Prenosil}
\end{prooof}

\medskip

 As stated at the beginning of the section, uniqueness of natural extensions holds only under certain cardinality restrictions. One can deduce this result in the language of filter pairs by directly proving the independence of the notion of filter from the choice of natural extensions. In this, Cintula and Noguera's cardinality conditions show up for the same reasons as they do in their original work. 

\begin{Prop}\label{PropUnderCardinalityConditionsFiltersAreTheSameForNaturalExtensions}
 Let $X,Y$ be sets, $X \subseteq Y$, and $l_X=(\Sigma, X, {\vdash}), l_Y=(\Sigma, Y, {\vdash}')$ logics such that $l_Y$ is a natural extension of $l_X$. Suppose that either $|X|=|Y|$ or $\card(l_X)\leq |X|^+$ 
 %(where $(-)^+$ denotes the successor of a cardinal)
 . Then a subset $F$ of a $\Sigma$-structure $A$ is an $l_X$-filter iff it is an $l_Y$-filter.
\end{Prop}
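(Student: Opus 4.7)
The proposition asserts a biconditional: \emph{every} $l_Y$-filter is an $l_X$-filter, and \emph{every} $l_X$-filter is an $l_Y$-filter. The first inclusion will work without any cardinality hypothesis; the cardinality condition is needed only for the second. My plan is to handle the two directions separately, leveraging Cintula--Noguera's uniqueness result \cite[Thm.~2.6]{CintulaNoguera} for the hard direction in order to use the explicit description of $\vdash^{\textrm{\tiny SS}}$.

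For the direction ``$l_Y$-filter $\Rightarrow$ $l_X$-filter'', I would take $F \subseteq A$ an $l_Y$-filter, a pair $\Gamma \cup \{\varphi\} \subseteq \Fm{\Sigma}{X}$ with $\Gamma \vdash_{l_X} \varphi$, and a homomorphism $v \colon \Fm{\Sigma}{X} \to A$ with $v(\Gamma) \subseteq F$. Since $l_Y$ is a conservative extension, $\Gamma \vdash_{l_Y} \varphi$. Because $X \subseteq Y$, any choice of a set-map $Y \setminus X \to A$ extends $v|_X$ to a set-map $Y \to A$ and hence by freeness to a $\Sigma$-homomorphism $\tilde v \colon \Fm{\Sigma}{Y} \to A$ with $\tilde v|_{\Fm{\Sigma}{X}} = v$. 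Then $\tilde v(\Gamma) = v(\Gamma) \subseteq F$, so by the $l_Y$-filter property $v(\varphi) = \tilde v(\varphi) \in F$, as desired.

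For the converse direction ``$l_X$-filter $\Rightarrow$ $l_Y$-filter'', which is where the cardinality hypothesis enters, the idea is to use the explicit Shoesmith--Smiley description of the natural extension. Under either hypothesis, $|X|=|Y|$ or $\card(l_X)\leq |X|^+$, Cintula and Noguera showed that $l_X$ admits a \emph{unique} natural extension to $\Fm{\Sigma}{Y}$, and that this unique extension is given by $\vdash^{\textrm{\tiny SS}}$; hence $\vdash' \ =\ \vdash^{\textrm{\tiny SS}}$. So given $F$ an $l_X$-filter on $A$, a pair $\Gamma \cup \{\varphi\} \subseteq \Fm{\Sigma}{Y}$ with $\Gamma \vdash_{l_Y} \varphi$, and $v \colon \Fm{\Sigma}{Y} \to A$ with $v(\Gamma) \subseteq F$, I unfold the Shoesmith--Smiley definition: there exist $\Gamma' \cup \{\varphi'\} \subseteq \Fm{\Sigma}{X}$ and a homomorphism $\tilde w \colon \Fm{\Sigma}{X} \to \Fm{\Sigma}{Y}$ such that $\tilde w(\Gamma') \subseteq \Gamma$, $\tilde w(\varphi') = \varphi$, and $\Gamma' \vdash_{l_X} \varphi'$. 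Composition gives a homomorphism $v \circ \tilde w \colon \Fm{\Sigma}{X} \to A$ with $(v \circ \tilde w)(\Gamma') \subseteq v(\Gamma) \subseteq F$. Applying the $l_X$-filter property to this valuation and to $\Gamma' \vdash_{l_X} \varphi'$ yields $(v \circ \tilde w)(\varphi') \in F$, i.e.\ $v(\varphi) \in F$, concluding the argument.

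The only subtle step is the invocation of Cintula--Noguera's uniqueness/identification of $\vdash'$ with $\vdash^{\textrm{\tiny SS}}$; everything else is a mechanical manipulation of the filter condition. If one wished to avoid quoting \cite[Thm.~2.6]{CintulaNoguera}, the same conclusion could be obtained by first verifying directly that the set of pairs $(\Gamma,\varphi) \in \wp(\Fm{\Sigma}{Y})\times \Fm{\Sigma}{Y}$ for which ``$v(\Gamma)\subseteq F$ implies $v(\varphi)\in F$'' is a substitution-invariant consequence relation extending the rules of $l_X$, hence contains $\vdash^-$, and then invoking uniqueness of the natural extension under the cardinality hypothesis (so that $\vdash' \ =\ \vdash^-$) to transfer the filter condition to $l_Y$.
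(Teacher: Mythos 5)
Your two directions are individually sound: the easy direction (every $l_Y$-filter is an $l_X$-filter) is essentially the paper's argument (the paper composes with a retraction $\Fm{\Sigma}{Y}\to\Fm{\Sigma}{X}$ fixing $X$, you extend the valuation over $Y\setminus X$; both work), and the mechanical unfolding of the Shoesmith--Smiley condition in the hard direction is also fine. But the hard direction rests entirely on quoting Cintula--Noguera's Theorem 2.6 (uniqueness of the natural extension and its identification with $\vdash^{\textrm{\tiny SS}}$ under the cardinality hypotheses), and this is a genuinely different -- and, in context, problematic -- route. In the paper this proposition is precisely the engine behind Corollary \ref{CorollaryUnderCardinalityConditionsNaturalExtensionsAreUnique}, i.e.\ the paper's stated goal is to \emph{re-derive} Cintula--Noguera's uniqueness ``in the language of filter pairs by directly proving the independence of the notion of filter from the choice of natural extensions.'' If the proposition is proved by invoking that very uniqueness theorem, the subsequent derivation of Corollary \ref{CorollaryUnderCardinalityConditionsNaturalExtensionsAreUnique} becomes circular and the point of the proposition evaporates; your fallback sketch (showing the relation ``$v(\Gamma)\subseteq F$ implies $v(\varphi)\in F$'' is a consequence relation containing the rules of $l_X$, hence contains $\vdash^-$, and then using uniqueness to get $\vdash'\,=\,\vdash^-$) has exactly the same dependence, since an arbitrary natural extension $l_Y$ need not coincide with $\vdash^-$ unless uniqueness is already known.

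The paper's proof of the hard direction is self-contained and uses only conservativity and structurality of $l_Y$ together with the cardinality bound: given $\Gamma\vdash_{l_Y}\varphi$, pick $\Gamma'\subseteq\Gamma$ with $|\Gamma'|<\card(l_Y)=\card(l_X)$ and $\Gamma'\vdash_{l_Y}\varphi$; the hypothesis $\card(l_X)\leq|X|^+$ (or $|X|=|Y|$) forces $|Var(\Gamma'\cup\{\varphi\})|\leq|X|$, so one can choose substitutions $\tau,\sigma\colon\Fm{\Sigma}{Y}\to\Fm{\Sigma}{Y}$ with $\tau$ mapping these variables injectively into $X$ and $\sigma\circ\tau$ the identity on them; structurality gives $\tau(\Gamma')\vdash_{l_Y}\tau(\varphi)$, conservativity gives $\tau(\Gamma')\vdash_{l_X}\tau(\varphi)$, and applying the $l_X$-filter property to the valuation $v\circ\sigma|_{\Fm{\Sigma}{X}}$ yields $v(\varphi)\in F$. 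To keep your proof from being circular (relative to this paper), you should replace the appeal to \cite{CintulaNoguera} by such a direct renaming argument, which is exactly where the cardinality conditions ``show up for the same reasons as they do in their original work.''
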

\begin{prooof}
Let $A$ be a $\Sigma$-structure and $F \subseteq A$.

An $l_Y$-filter is an $l_X$-filter: Indeed, let $F$ be an $l_Y$-filter, $\Gamma \cup \{\varphi\} \subseteq \Fm{\Sigma}{X}$ such that $\Gamma \vdash_{l_X} \varphi$, and $v \colon \Fm{\Sigma}{X} \to A$ a valuation with $v(\Gamma) \subseteq F$. We need to show that $v(\varphi) \in F$. 

%Choose a map $g \colon Y \to X$ by mapping $Y \setminus X$ to some $x \in X$ and setting $g(x):=x$ for $x \in X$.
{Choose a map $g \colon Y \to \Fm{\Sigma}{X}$ such that $g(x):=x$ for $x \in X$.}
%\footnote{\color{red}{ Again, if $\Fm{\Sigma}{X} = \emptyset$, there is nothing to prove. ESTE FOOTNOTE PODE SER ELIMINADO POIS ADIONANMOS  A HIPOTESE $X$ INFINITO (conforme sequencia). }}
This induces a homomorphism $\hat{g} \colon \Fm{\Sigma}{Y} \to \Fm{\Sigma}{X}$ and hence a  valuation $(v \circ \hat{g}) \colon  \Fm{\Sigma}{Y} \to A$. We have $(v \circ \hat{g})(\Gamma) = v(\Gamma)  \subseteq F$ and hence, since $F$ is an $l_Y$-filter, $(v \circ \hat{g})(\varphi) \in F$. Since $v \circ \hat{g}$ coincides with $v$ on $\Fm{\Sigma}{X}$ this means $v(\varphi) \in F$.

\medskip

An $l_X$-filter is an $l_Y$-filter: Let $F$ be an $l_X$-filter, $\Gamma \cup \{\varphi\} \subseteq \Fm{\Sigma}{Y}$ such that $\Gamma \vdash_{l_Y} \varphi$, and $v \colon \Fm{\Sigma}{Y} \to A$ a valuation with $v(\Gamma) \subseteq F$. We need to show that $v(\varphi) \in F$.

Choose $\Gamma' \subseteq \Gamma$ with $|\Gamma'| < \card(l_Y)$. Since $\card(l_Y) = \card(l_X) \leq |X|^+$, we have that $|\Gamma'| \leq |X|$ and also  $|\Gamma' \cup \{\varphi'\}| \leq |X|$, since $X$  is infinite. %{\color{red}{Opa, realmente eh bom assumir que $X$ infinito! ja coloquei isto na secao 1...}} 
Since every formula of $\Gamma' \cup \{\varphi'\}$ only has finitely many variables, we have that the set $Var(\Gamma' \cup \{\varphi'\})$ of variables ocurring there has cardinality $\leq |X|$. Hence we can choose functions $\tau \colon Y \to Y$ and $\sigma \colon Y \to Y$ such that $\tau$ maps $Var(\Gamma' \cup \{\varphi'\})$ injectively to $X$ and $(\sigma \circ \tau)|_{Var(\Gamma' \cup \{\varphi'\})} = id$. As usual we keep the notations $\tau, \sigma$ for the induced maps on the formula algebra.

We then have $\tau(\Gamma) \cup \{\tau(\varphi)\} \subseteq \Fm{\Sigma}{X}$. By substitution invariance of $l_Y$ we have $\tau(\Gamma) \vdash_{l_Y} \tau(\varphi)$ and, since $l_Y$ is a conservative extension of $l_X$, also $\tau(\Gamma) \vdash_{l_X} \tau(\varphi)$.

 Then $w:= v \circ \sigma|_{\Fm{\Sigma}{X}} \colon \Fm{\Sigma}{X} \to \Fm{\Sigma}{Y} \to A$ is a valuation with $w(\tau(\Gamma)) = v(\sigma(\tau(\Gamma))) = v(\Gamma) \subseteq F$. Since $F$ is an $l_X$-filter, we have $v(\varphi) = v(\sigma(\tau(\varphi))) =  w(\tau(\varphi)) \in F$.
\end{prooof}

% \begin{Obs}
% In the above proof we really only need functions $\tau \colon X \to Y$ and $\sigma \colon Y \to X$ such that $(\tau \circ \sigma)(\Gamma) \subseteq \Gamma$ and $(\tau \circ \sigma) (\varphi) = \varphi$. But this does not seem any easier to obtain without cardinality restrictions. \textcolor{red}{Check the sharpness of the cardinality conditions in Prenosil and adjust this comment accordingly.}
% \end{Obs}
% 

\begin{Cor}[Cintula, Noguera]\label{CorollaryUnderCardinalityConditionsNaturalExtensionsAreUnique}
 Under the cardinality restrictions of \linebreak Proposition \ref{PropUnderCardinalityConditionsFiltersAreTheSameForNaturalExtensions}, natural extensions are unique. 
\end{Cor}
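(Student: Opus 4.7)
The plan is to leverage Proposition \ref{PropUnderCardinalityConditionsFiltersAreTheSameForNaturalExtensions} directly: it pins down the notion of filter in a way that does not depend on the particular natural extension, and so two natural extensions must have the same filter lattices on every $\Sigma$-structure, in particular on their common formula algebra $\Fm{\Sigma}{Y}$. Since a logic is determined by its theories, and theories are filters on the formula algebra, this forces the two natural extensions to coincide.

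More concretely, suppose $l_Y = (\Sigma, Y, \vdash')$ and $l_Y' = (\Sigma, Y, \vdash'')$ are two natural extensions of $l_X = (\Sigma, X, \vdash)$. Both have cardinality equal to $\card(l_X)$, and both are conservative extensions of $l_X$ containing $X \subseteq Y$, so the cardinality hypothesis of Prop. \ref{PropUnderCardinalityConditionsFiltersAreTheSameForNaturalExtensions} applies verbatim to each of them. Applying the proposition once to $l_Y$ and once to $l_Y'$, we obtain for every $\Sigma$-structure $A$ the equalities $\Fi{l_X}{A} = \Fi{l_Y}{A}$ and $\Fi{l_X}{A} = \Fi{l_Y'}{A}$ as subsets of $\wp(A)$. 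In particular $\Fi{l_Y}{A} = \Fi{l_Y'}{A}$ for every $A$.

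Specializing to $A = \Fm{\Sigma}{Y}$ and invoking the remark right after Def. \ref{DefTheoryFilterLatticeOfFilters} that theories coincide with filters on the formula algebra, we conclude that $l_Y$ and $l_Y'$ have the same theories as subsets of $\Fm{\Sigma}{Y}$. The closure operator of a logic sends $\Gamma \subseteq \Fm{\Sigma}{Y}$ to the intersection of all theories containing $\Gamma$, so $C_{\vdash'} = C_{\vdash''}$, and therefore $\vdash' \,=\, \vdash''$.

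The only thing one has to be mindful of is that Prop. \ref{PropUnderCardinalityConditionsFiltersAreTheSameForNaturalExtensions} really is applicable to both candidate extensions simultaneously; this is immediate because the hypothesis is stated purely in terms of $|X|, |Y|$ and $\card(l_X)$, all of which are the same for the two extensions (the cardinality of either $l_Y$ or $l_Y'$ equals $\card(l_X)$ by the definition of natural extension). No further obstacle arises; the corollary is essentially a one-line consequence of Prop. \ref{PropUnderCardinalityConditionsFiltersAreTheSameForNaturalExtensions} together with the general fact that a structural consequence relation on $\Fm{\Sigma}{Y}$ is determined by its lattice of theories.
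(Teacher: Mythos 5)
Your proof is correct and follows essentially the same route as the paper: the whole weight rests on Proposition \ref{PropUnderCardinalityConditionsFiltersAreTheSameForNaturalExtensions} making the filters independent of the chosen extension, and then on the fact that a logic is recovered from its filters on the formula algebra. The only cosmetic difference is that the paper packages this last step through the canonical filter pair (via $l_Y=\Log{Y}(\FilPa(l_Y))$, Theorem \ref{TheoremEveryLogicComesFromAFilterPair}), thereby also identifying the unique extension with the one of Corollary \ref{CorollaryKappaFilterPairPresentingLogicOfCardinalityKappaAlsoYieldsNaturalExtensions}, whereas you argue directly that equal theory lattices on $\Fm{\Sigma}{Y}$ force equal closure operators.
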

\begin{prooof}
Let $l_X$ be a logic with set of variables $X$ and $l_Y$ a natural extension of $l_X$ with set of variables $Y$. By Proposition \ref{PropUnderCardinalityConditionsFiltersAreTheSameForNaturalExtensions} we have $\Fi{l_X}{A} = \Fi{l_Y}{A}$ for any $\Sigma$-structure $A$ and hence the equality of filter pairs $\FilPa(l_X) = \FilPa(l_Y)$. By Theorem \ref{TheoremEveryLogicComesFromAFilterPair} $l_Y = \Log{Y}(\FilPa(l_Y))$. Therefore $l_Y = \Log{Y}(\FilPa(l_Y)) = \Log{Y}(\FilPa(l_X))$ is the natural extension of Corollary \ref{CorollaryKappaFilterPairPresentingLogicOfCardinalityKappaAlsoYieldsNaturalExtensions}. 
\end{prooof}

\begin{Obs}
We now have a second proof of Theorem \ref{TheoremEveryLogicComesFromAFilterPair}: By Corollary \ref{CorollaryKappaFilterPairPresentingLogicOfCardinalityKappaAlsoYieldsNaturalExtensions}, in the special case $X=Y$ %{\color{red}{Acho que aqui tambem teria que avisar do caso $X$ infinito}}
 we obtain that $\Log{X}(\FilPa(l))$ is a natural extension of $l$. Of course $l$ is also a natural extension of itself and the cardinality conditions of Corollary \ref{CorollaryUnderCardinalityConditionsNaturalExtensionsAreUnique} are satisfied, so $\Log{X}(\FilPa(l))=l$. This shows that, given Cintula and Noguera's uniqueness result, Corollary \ref{CorollaryKappaFilterPairPresentingLogicOfCardinalityKappaAlsoYieldsNaturalExtensions} is in fact a generalization of Theorem \ref{TheoremEveryLogicComesFromAFilterPair}.
\end{Obs}

\section{Filter pairs yielding a fixed logic}\label{SectionFilterPairsYieldingAFixedLogic}

We have seen in Theorem \ref{TheoremFilterPairsGiveConservativeExtensions} that a $\kappa$-filter pair can be regarded as a presentation of a family of logics over all sets of variables, all of which are natural extensions of each other. In this final section we consider the collection of possible choices of such families of natural extensions of a fixed base logic.

Throughout the section we fix a regular cardinal $\kappa$, a signature $\Sigma$, an infinite  set $X$ and a logic $l=(\Fm{\Sigma}{X}, {\vdash})$.

We consider the collection $\FP{\Sigma}$ of all filter pairs $(G,i)$ such that $G : \SigmaStr^{op} \to \CLat$. We can give this  collection the structure of a category by defining a morphism $(G',i') \to (G,i)$ to be a natural transformation $t \colon G \to G'$ (note the opposite direction!) such that the following triangle of natural transformations commutes:

 $$\xymatrix{
G \ar[dr]_i \ar[rr]^t &&  G'\ar[dl]^{i'} \\
 & \wp & 
}$$

In fact we will be more interested in $\FP{l}$, the full subcategory of $\FP{\Sigma}$ such that $\Log{X}(G,i)=l$.

We have introduced the reversal of arrows in the definition of $\FP{l}, \FP{\Sigma}$, because in this way morphisms of filter pairs induce translations between their associated logics \emph{in the same direction}: Indeed, a map of logics induces, by taking preimage, a map in the opposite direction between the theory lattices.

In particular the passage to a stronger logic over the same signature means restriction to a smaller theory lattice, which is reflected in the {\bf anti}-isomorphism between the poset of sublattices of powerset lattices and the poset of closure operators from Section \ref{SectionPreliminaries}.

Here is an overview of how directions of morphisms correspond to each other:

 $$\xymatrix{
 \text{logics and translations:} & l \ar[r] & l'  \\
 \text{closure operators:} & C_l \ar[r] & C_{l'} \\
 \text{theory lattices:} & Th(l) & Th(l') \ar[l] \\
 \text{filter pairs:} & \FilPa(l) \ar[r] & \FilPa(l')
 }$$

Here the last reversal of the arrow is purely formal; literally such an arrow is given by lattice maps in the opposite direction. 
 
It is probably helpful, in the following, to keep in mind that one can either think of lattice inclusions and revert arrows or think directly in terms of closure operators and maintain the direction of arrows -- whichever provides a better understanding.

%{\color{red}{

\begin{Obs}
\begin{itemize}

\item It would also be a natural choice to demand an inclusion $i \subseteq i'\circ t$ instead of the equality $i = i'\circ t$, but for the current discussion this would only add redundancy.

    \item The categories $\FP{l}, \FP{\Sigma}$ can be seen as (non-full) subcategories of the category of all $\kappa$-filter pairs, that generalizes the category of all finitary filter pairs introduced in \cite{AMP}.
%Assim em  $\FP{l}$ queremos restringir a funtores de assinatura $H = Id_{\Sigma-Str}$. 

\end{itemize}

\end{Obs}

%}}

The first observation is that the category $\FP{l}$ has a initial object.

\begin{Lem}\label{LemmaImageOfiConsistsOfFilters}
 Let $(G,i)$ be a $\kappa$-filter pair and $X$ a set. Then for every $\Sigma$-structure $M$ and $a \in G(M)$ the set $i_M(a) \in \wp(M)$ is a filter for the logic $\Log{X}(G,i)$.
\end{Lem}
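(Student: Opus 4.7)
The plan is to unfold the definition of an $l$-filter (Def. \ref{DefTheoryFilterLatticeOfFilters}) for $l = \Log{X}(G,i)$, and then use the naturality square of the filter pair to transport the question from the given $\Sigma$-structure $M$ back to the formula algebra $\Fm{\Sigma}{X}$, where the closure operator $C_G = i_{\Fm{\Sigma}{X}} \circ j_{\Fm{\Sigma}{X}}$ is defined directly in terms of $i$.

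Concretely, fix $a \in G(M)$, and suppose we are given $\Gamma \cup \{\varphi\} \subseteq \Fm{\Sigma}{X}$ with $\Gamma \vdash_{\Log{X}(G,i)} \varphi$, and a valuation $v \colon \Fm{\Sigma}{X} \to M$ with $v(\Gamma) \subseteq i_M(a)$. We must show $v(\varphi) \in i_M(a)$. The first step is to rewrite the hypothesis as $\Gamma \subseteq v^{-1}(i_M(a))$, and then invoke the naturality condition (Def. \ref{DefinitionKappaFilterPair}, item 2) applied to the homomorphism $v$, which gives $v^{-1}(i_M(a)) = i_{\Fm{\Sigma}{X}}(G(v)(a))$. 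Thus $\Gamma$ sits inside a set in the image of $i_{\Fm{\Sigma}{X}}$.

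Next I would use that every element in the image of $i_{\Fm{\Sigma}{X}}$ is a fixed point of the closure operator $C_G$ (this is the standard fact about compositions of adjoint pairs recalled just before Thm. \ref{LeftAdjointInFilterPairs}: one has $i \circ j \circ i = i$). Hence $C_G(\Gamma) \subseteq i_{\Fm{\Sigma}{X}}(G(v)(a))$ by monotonicity of $C_G$. Since $\Gamma \vdash_{\Log{X}(G,i)} \varphi$ means $\varphi \in C_G(\Gamma)$, we conclude
\[
\varphi \in i_{\Fm{\Sigma}{X}}(G(v)(a)) = v^{-1}(i_M(a)),
\]
so $v(\varphi) \in i_M(a)$, as required.

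There is no real obstacle here: the entire argument is a direct unwinding of definitions, once one recognizes that naturality is the right tool to pull back the statement from $M$ to $\Fm{\Sigma}{X}$. The only mild point to keep in mind is the use of the identity $v^{-1} \circ i_M = i_{\Fm{\Sigma}{X}} \circ G(v)$ — which is precisely condition 2 in the definition of a $\kappa$-filter pair — and the fact that sets in the image of $i$ are automatically closed under the associated closure operator. No cardinality considerations or properties of $j_{\Fm{\Sigma}{X}}$ beyond its being left adjoint to $i_{\Fm{\Sigma}{X}}$ are needed.
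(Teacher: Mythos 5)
Your proof is correct and follows essentially the same route as the paper: apply the naturality square to the valuation $v$ to get $v^{-1}(i_M(a)) = i_{\Fm{\Sigma}{X}}(G(v)(a))$, observe that this set is closed (in the image of $i_{\Fm{\Sigma}{X}}$), conclude $\varphi$ lies in it, and push forward with $v$. The only cosmetic difference is that you phrase the last step via fixed points of $C_G$ and monotonicity, while the paper uses the equivalent description of $\vdash_{\Log{X}(G,i)}$ as membership in every set in the image of $i_{\Fm{\Sigma}{X}}$ containing $\Gamma$.
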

\begin{prooof}
 Consider a $\Sigma$-structure $M$ and an element $a \in G(M)$. Let $\Gamma \cup \varphi \subseteq \Fm{\Sigma}{X}$ be such that $\Gamma \vdash_{\Log{X}(f)} \varphi$ and let $\sigma \colon \Fm{\Sigma}{X} \to M$ be a morphism such that $\sigma(\Gamma) \subseteq i_M(a)$. We need to show that $\sigma(\varphi) \in i_M(a)$.
 
 By the commutativity of the naturality square below, we have  $\sigma^{-1}(i_M(a)) = i_{\Fm{\Sigma}{X}}(G(\sigma)(a))$ for every $a \in G(M)$:
 
 $$\xymatrix{
\Fm{\Sigma}{X} \ar[d]_\sigma &  G(\Fm{\Sigma}{X}) \ar[r]^-{i_{\Fm{\Sigma}{X}}} & \wp(\Fm{\Sigma}{X}) \\
M & G(M) \ar[u]^{G(\sigma)} \ar[r]^-{i_M} & \wp(M) \ar[u]_{\sigma^{-1}} 
}$$
 
 By definition of $\Log{X}(f)$ the hypothesis $\Gamma \vdash_{\Log{X}(f)} \varphi$ means that $\varphi$ is  contained in every set in the image of $i_{\Fm{\Sigma}{X}}$ that contains $\Gamma$. Thus, since $$\Gamma \subseteq \sigma^{-1}\sigma(\Gamma) \subseteq \sigma^{-1}(i_M(a)) = i_{\Fm{\Sigma}{X}}(G(\sigma)(a)),$$ we also have $\varphi \in i_{\Fm{\Sigma}{X}}(G(\sigma)(a)) = \sigma^{-1}(i_M(a))$. Applying $\sigma$ yields $\sigma(\varphi) \in  i_M(a)$.
\end{prooof}

\begin{Prop}\label{PropTerminalObjectInFilterPairsPresentingAFixedLogic}
 The filter pair $\FilPa(l)$ is the initial object of the category $\FP{l}$. In other words, for every filter pair $f=(G',i')$ such that $\Log{X}(f)=l$ there is a unique morphism from $\FilPa(l)$ to $f$.
\end{Prop}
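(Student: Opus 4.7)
The plan is to define the unique morphism $t\colon \FilPa(l) \to f$ pointwise using the factorization forced by Lemma \ref{LemmaImageOfiConsistsOfFilters}, and then check that this definition is natural and satisfies the triangle, with uniqueness being automatic from injectivity of the inclusion $i_{\FilPa}$.

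Unfolding definitions, a morphism $\FilPa(l) \to f$ in $\FP{l}$ is a natural transformation $t\colon G' \to Fi_l$ such that $i'_M = (i_{\FilPa})_M \circ t_M$ for every $\Sigma$-structure $M$, where $(i_{\FilPa})_M \colon Fi_l(M) \hookrightarrow \wp(M)$ is the inclusion. First, by Lemma \ref{LemmaImageOfiConsistsOfFilters} applied to $(G',i')$ (which presents $l$), for every $M$ and every $a \in G'(M)$ the subset $i'_M(a)$ is an $l$-filter on $M$, i.e. lies in $Fi_l(M)$. Since $(i_{\FilPa})_M$ is just subset inclusion and hence injective, there is one and only one candidate: define $t_M(a) := i'_M(a)$, viewed as an element of $Fi_l(M)$. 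This is order preserving because $i'_M$ is, and the triangle $i'_M = (i_{\FilPa})_M \circ t_M$ holds tautologically.

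Next I would check naturality of $t$. Given $h\colon M \to N$, I need
$$Fi_l(h) \circ t_N = t_M \circ G'(h) \colon G'(N) \to Fi_l(M).$$
Because $(i_{\FilPa})_M$ is injective, it suffices to check this after composing with $(i_{\FilPa})_M$. On the one hand, $(i_{\FilPa})_M \circ Fi_l(h) \circ t_N = h^{-1} \circ (i_{\FilPa})_N \circ t_N = h^{-1} \circ i'_N$ by naturality of $i_{\FilPa}$ and the triangle. On the other hand, $(i_{\FilPa})_M \circ t_M \circ G'(h) = i'_M \circ G'(h) = h^{-1} \circ i'_N$ by naturality of $i'$. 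So both sides agree and $t$ is natural.

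Uniqueness of $t$ is immediate: any $t'\colon G' \to Fi_l$ satisfying the triangle must fulfill $(i_{\FilPa})_M \circ t'_M = i'_M$, and injectivity of $(i_{\FilPa})_M$ forces $t'_M = t_M$ for every $M$. I do not anticipate any real obstacle here; the only genuine input is Lemma \ref{LemmaImageOfiConsistsOfFilters}, which guarantees that $i'_M$ lands inside the sublattice of $l$-filters so that the factorization exists in the first place.
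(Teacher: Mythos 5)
Your proposal is correct and follows essentially the same route as the paper: both use Lemma \ref{LemmaImageOfiConsistsOfFilters} to see that $i'_M$ factors through the inclusion $Fi_l(M)\hookrightarrow\wp(M)$, define $t$ by this factorization, and get uniqueness from objectwise injectivity of the inclusion. Your write-up just makes explicit the naturality check (via composing with the injective inclusion) that the paper leaves as an assertion.
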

\begin{prooof}
We really construct a terminal object in the opposite category:
 Lemma \ref{LemmaImageOfiConsistsOfFilters} says that for every $M\in \SigmaStr$, $i_{M}(G'(M))\subseteq Fi_{\Log{X}(f)}(M)$. These inclusions form a natural transformation $t \colon G' \to Fi_{l}$ fitting into a commutative triangle with the inclusion $i \colon Fi_{\Log{X}(f)}(M) \subseteq \wp(M)$ of $l$-filters into all subsets. The uniqueness of $t$ simply follows from the fact that $i$ is objectwise injective.
\end{prooof}

%{\color{red}{

\vspace{2mm}

One may ask about further structure or properties of the categories $\FP{\Sigma}$, $\FP{l}$. This would lead to a discussion which is best carried out in the context of general morphisms of filter pairs, and is left for a later work. 

The following consideration shows that to get a meaningful parametrization of the collection of filter pairs that give rise to a fixed logic $l$, we should restrict to the so-called mono filter pairs:

\begin{Obs}\label{RemarkProperClassOfEquivalentFilterPairs}
The isomorphism classes of objects of $\FP{l}$ form a proper class: Given a filter pair $(G,i)$ and a lattice $L$, we can construct a new filter pair $(G^L, i^L)$ defined by $$G^L(A):=L \times G(A)$$ and $$i^L \colon L \times G(A) \stackrel{pr}{\to} G(A)  \stackrel{i_A}{\to} \wp(A).$$
It is clear that the image of $i^L_A$ equals the image of $i_A$, and thus both filter pairs give rise to the exact same family of generalized matrices. Since we can repeat this construction with each member of some proper class of lattices of ever bigger cardinality, there is a proper class of non-isomorphic filter pairs giving back the same generalized matrices.

If we want to see a filter pair as a presentation of a collection of generalized matrices, we might therefore choose to identify two filter pairs $(G,i)$ and $(H,j)$, if the images of the maps $i_A$ and $j_A$ coincide for all $A$. Each equivalence class then has a unique member for which all maps $i_A$ are injective: The filter pair where the lattice consists of the image of $i_A$ and and the natural transformation is the inclusion.
\end{Obs}

In the light of the previous remark, we now concentrate on \emph{mono filter pairs}, i.e. filter pairs $(G,i)$ such that $i_A$ is injective (i.e. a \emph{mono}morphism) for every $A \in \SigmaStr$. These mono filter pairs parametrize the equivalence classes of Remark \ref{RemarkProperClassOfEquivalentFilterPairs}. The full subcategory of $\FP{\Sigma}$ (resp. $\FP{l}$)  whose objects are mono filter pairs will be denoted by $\FPm{\Sigma}$ (resp. $\FPm{l}$).   One sees immediately that this category is actually a pre-ordered {\bf class}, because if both $i_A$ and $i'_A$ in the defining triangle for morphisms are injective, then  $t_A$ is unique and is injective too, for each $A \in obj(\SigmaStr)$. 

Other subcategories that are natural to consider are $\FPi{\Sigma}$ and $\FPi{l}$, where the maps  $i_A$ and $i'_A$ (and thus $t_A$) are in fact {\em inclusions}, $A \in obj(\SigmaStr)$. Obviously $\FPi{\Sigma}, \FPi{l}$ are partially ordered classes and, moreover  $\FPi{\Sigma} \simeq \FPm{\Sigma}$ and  $\FPi{l} \simeq \FPm{l}$.  Dealing directly with $\FPi{\Sigma}, \FPi{l}$ turns easier all the calculations, in fact, is easier to deal first with (arbitrary) infima and (set-sized) suprema in $\FPi{\Sigma}$  -- described ``coordinatewise'' from the results on intersection families recalled in Section 1 --  and then provide the adaptions needed to calculate infima and suprema in $\FPi{l}$. But a direct calculation is provided below: %{\color{blue} No momento nao estamos usando $\FPi{l}$ - talvez tirar...}{\color{red}{Acho que poderia deixar assim, so como comentario, pois o leitor menos categorista estaria pensando em inclusoes mesmo, e ai ele poderia fazer conexao mais direta com as familias de interseccao da secao 1. }}

\begin{Prop}\label{PropMonoFilterPairsPresentingFixedLogicAreCompleteLattice}
The partially ordered  class  equivalent to 
%(senao teriamos que falar de conjunto pre-ordenados e pre-reticulados completos)}}
$\FPm{l}$ admits set sized suprema of nonempty sets.
\end{Prop}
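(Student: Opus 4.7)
The plan is to work in the equivalent partially ordered class $\FPi{l}$ (where each $\iota_A$ is a genuine inclusion) and to exhibit the supremum of a nonempty set $\{(G_s, \iota_s)\}_{s \in S}$ as the coordinatewise intersection. Since in $\FPi{l}$ a morphism $(G_s, \iota_s) \to (G, \iota)$ amounts to an inclusion $G \subseteq G_s$, the only candidate for the supremum is $G(A) := \bigcap_s G_s(A)$, equipped with the inclusion $\iota_A$ into $\wp(A)$. Once $(G, \iota) \in \FPi{l}$ is established, the supremum property is automatic: any upper bound $(G', \iota')$ satisfies $G' \subseteq G_s$ for every $s$, hence $G' \subseteq G$, yielding the required morphism $(G, \iota) \to (G', \iota')$.

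Three things must be verified to place $(G, \iota)$ in $\FPi{l}$. Functoriality of $G$ is routine: preimages of elements of $G(B)$ under any $f \colon A \to B$ lie in every $G_s(A)$ by the functoriality of the $G_s$, hence in $G(A)$. That each $G(A)$ is a $\kappa$-presentable sublattice of $\wp(A)$ with $\iota_A$ preserving infima and $\kappa$-directed suprema follows from the facts that $\bigcap_s$ commutes with both operations, that $\iota_A$ then admits a left adjoint $j_A$ by Theorem \ref{LeftAdjointInFilterPairs}, and that each $x \in G(A)$ is the $\kappa$-directed union of the $\kappa$-small elements $\{j_A(T) : T \in P_{<\kappa}(A),\ T \subseteq x\}$, where the compactness of $j_A(T)$ is the standard argument (a $\kappa$-directed sup in $G(A)$ is a union in $\wp(A)$, and $T$ has fewer than $\kappa$ elements to scatter).

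The third and crucial point is the identity $\Log{X}(G, \iota) = l$. By Lemma \ref{LemmaImageOfiConsistsOfFilters} applied to any chosen $s_0 \in S$ (here the nonemptiness hypothesis is essential), every element of $G(\Fm{\Sigma}{X}) \subseteq G_{s_0}(\Fm{\Sigma}{X})$ is an $l$-theory. For $T \in P_{<\kappa}(\Fm{\Sigma}{X})$, the hypothesis $\Log{X}(G_s, \iota_s) = l$ forces the $l$-theory $C_l(T)$ to be the smallest element of each $G_s(\Fm{\Sigma}{X})$ containing $T$; in particular $C_l(T) \in G_s(\Fm{\Sigma}{X})$ for every $s$, hence $C_l(T) \in G(\Fm{\Sigma}{X})$, and since every element of $G(\Fm{\Sigma}{X})$ containing $T$ is an $l$-theory containing $T$ (hence $\supseteq C_l(T)$), it follows that $C_l(T)$ is the smallest such element, i.e. $j_{\Fm{\Sigma}{X}}(T) = C_l(T)$. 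The general case extends this by writing $C_l(T)$ for arbitrary $T$ as the $\kappa$-directed union of $C_l(T')$ with $T' \in P_{<\kappa}(T)$, using the $\kappa$-ary property of $l$ together with the regularity of $\kappa$ (which keeps the union $\kappa$-directed) and the closure of $G(\Fm{\Sigma}{X})$ under such unions. The main obstacle is precisely this transfer step: ensuring that the intersection remains rich enough to contain the $\kappa$-generated $l$-theories and lean enough to contain only $l$-theories. Without the nonemptiness assumption one would get $G = \wp$, whose associated logic is trivial rather than $l$, which is why the statement is restricted to nonempty sets.
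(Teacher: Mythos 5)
Your proof is correct and takes essentially the same route as the paper's: the paper also builds the supremum coordinatewise at each $\Sigma$-structure $A$, working on the closure-operator side of the anti-isomorphism recalled in Section 1 (its operator $C_A$ is precisely the $\kappa$-ary closure whose closed sets are your intersection $\bigcap_s G_s(A)$), then checks $\kappa$-arity, functoriality via preimages, and concludes by the reversal of arrows exactly as you do. As a minor simplification, your verification that $\Log{X}(G,\iota)=l$ can be shortened by observing that each $G_s(\Fm{\Sigma}{X})$ is already the full theory lattice of $l$ (being the image of $i_{s}$, i.e.\ the closed sets of $C_l$), so the intersection coincides with it.
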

\begin{prooof}
 Let $(G^{r},i^{r})_{r\in R}$ where $R$ is a set. Consider $C^{r}_{A}$ the closure operator over  $A\in \SigmaStr$ determined by $(G^{r},i^{r})$ as, for each $M\subseteq A$,

\[C^{r}_{A}(M)=\bigcap_{a\in G^{r}(A)}\{i^{r}_{A}(a)|\ M\subseteq i^{r}_{A}(a)\}.\]

The closed sets of $C^{r}_{A}$ are exactly the image of $i^{r}_{A}$. Define the operator $C_{A}$ as, for each subset $M$ of $A$,  

\[C_{A}(M):=\bigcup_{N\subseteq M,|N|<\kappa} \bigcap\{N\subseteq X\subseteq A|\ X=C^{r}_{A}(X)\ \forall r\in R\}.\]

It is easy to check that $C_{A}$ is a closure operator. Notice that for each subset $N\subseteq M$ such that $|N|<\kappa$, $C_{A}(N)=\bigcap\{N\subseteq X\subseteq A|\ C^{r}_{A}(X)=X\ \forall r\in R \}$. Then $C_{A}(M)=\bigcup_{N\subseteq M,|N|<\kappa}C_{A}(N)$. Proving the $\kappa$-arity of $C_{A}$ . Now we prove that $C_{A}$ is the supremum of $(C^{r}_{A})_{r\in R}$ . 

Let $M\subseteq A$ and  $N\subseteq M$ where $|N|<\kappa$ such that. Notice that $C^{r}_{A}(N)\subseteq X$ for each subset $N\subseteq X\subseteq A$ such that $C^{r'}_{A}(X)=X$ for all $r'\in R$. So, $C^{r}_{A}(N)\subseteq C_{A}(N)$. Since $C_{A}$ is $\kappa$-ary, we have that $C^{r}_{A}(M)\subseteq C_{A}(M)$. Thus $C^{r}_{A}\leq C_{A}$. Now, let $C$ be a $\kappa$-ary closure operator over $A$ such that $C^{r}_{A}\leq C$ for all $r\in R$. Let $N\subseteq A$ such that $|N|<\kappa$. Let $X$ be a subset of $A$ containing $N$ such that $C(X)=X$. Since $C^{r}_{A}\leq C$ for all $r\in R$, we have that $C_{A}(N)=\bigcap\{X\supseteq N|\  C^{r}_{A}(X)=X\ \forall r\in R\}\subseteq \bigcap\{X\supseteq N|\ C(X)=X\}=C(N)$. Since $C_{A}$ and $C$ are $\kappa$-ary closure operator, then $C_{A}\leq C$. This proves that $C_{A}=\bigvee_{r\in R}C^{r}_{A}$.

Define  the application $G:\SigmaStr\to \Lk$ such that $G(A)$
 is the $\kappa$-lattice of $C_{A}$-closed sets. For a morphism $f:A\to B$ of $\SigmaStr$, $G(f):=f^{-1}$. First notice that for any $r\in R$, and $F$ closed set of $C^{r}_{B}$, then $f^{-1}(F)$ is a closed set of $C^{r}_{A}$. Since $C_{A}$ is the supremum of $C^{r}_{A}$ for all $r\in R$, then, for $F$ closed set of $C_{B}$, $C_{A}(f^{-1}(F))=\bigvee_{r\in R}C^{r}_{A}(f^{-1}(F))=f^{-1}(F)$. Thus $f^{-1}(F)$ is a closed set of $C_{A}$. This proves the functoriality of $G$ and that $(G,i)$ is a mono $\kappa$-filter pair.
 
 We have constructed the closure operator of $G$ at each $\Sigma$-structure as supremum of the closure operators of the $G^r$. This induces inclusions of the theory lattices in the opposite directions, $G(M)\hookrightarrow G^r(M)$ for all $M \in \SigmaStr$, and one readily sees that these form a natural transformation. By the reversal of arrows in $\FPm{l}$, this means $(G^r,i)\leq (G,i)$ for all $r\in R$. As $(G,i)$ was constructed as a pointwise supremum, it is a supremum.
\end{prooof}

%The terminal object presented in Prop. \ref{PropTerminalObjectInFilterPairsPresentingAFixedLogic} is a mono filter pair, so we have a top element.

%For a non-empty collection of mono filter pairs $(G^r, i^r) \in \FPm{l}$, $r \in R$ (some index set), we identify $G^r(A)$ with $Im(i^r_A) \subseteq \wp(A)$ and $i^r$ with the inclusion. 
%Then the infimum is given by $(G,i)$ where $G(A):=\bigcap_{r \in R} G^r(A)$ and $i_A$ the inclusion into $\wp(A)$. One can immediately verify that this defines a mono filter pair (that is necessarily the infimum of the $(G^r, i^r)$) and that the inclusions $(G,i) \to (G^r, i^r)$ are morphisms of filter pairs.

\begin{Obs}\label{ObsSeForConjuntoEntaoEReticuladoCompleto}
If $\FPm{l}$ is equivalent to a set, then by Prop. \ref{PropMonoFilterPairsPresentingFixedLogicAreCompleteLattice} and Prop. \ref{PropTerminalObjectInFilterPairsPresentingAFixedLogic} it is equivalent to a complete lattice. In this case we also have a terminal object and arbitrary infima. 
\end{Obs}

{If  $\FPm{l}$ has a terminal object, i.e. a mono $\kappa$-filter pair $(H,j)$ presenting $l$ into which all other filter pair in $\FPm{l}$ map, then we can give a concrete description of the values of this filter pair on free algebras:}

\begin{Lem}\label{LemmaMaximalNatExtensionRestrictsToMaximalNatExtension}
 Let $X \subseteq Y \subseteq Z$ be sets of variables and $l=(\Fm{\Sigma}{X}, {\vdash}_l)$ a logic. Consider the maximal natural extension $l^{+,Z}_\kappa = (\Fm{\Sigma}{Z}, {\vdash}_\kappa^{+,Z})$ of $l$ to the set of variables $Z$. Then $\Log{Y}(\FilPa(l^{+,Z}_\kappa)) = l^{+,Y}_\kappa$, i.e. the restriction of the maximal extension to $\Fm{\Sigma}{Z}$ down to $\Fm{\Sigma}{Y}$ is again the maximal extension.
\end{Lem}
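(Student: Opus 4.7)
The plan is to characterize both sides of the desired equality at the level of theories and then match them. By Theorem \ref{TheoremEveryLogicComesFromAFilterPair} together with Theorem \ref{FilterPairsFromLogics}, the theories of the logic $\Log{Y}(\FilPa(l^{+,Z}_\kappa))$ are exactly the $l^{+,Z}_\kappa$-filters on the $\Sigma$-structure $\Fm{\Sigma}{Y}$; since a logic is determined by its theories, it suffices to show these filters coincide with the $l^{+,Y}_\kappa$-theories.

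A preliminary step compares $\vdash^{+,Z}$ and $\vdash^{+,Y}$ on pairs $(\Gamma,\varphi)$ with $\Gamma \cup \{\varphi\} \subseteq \Fm{\Sigma}{Y}$. Since $X$ is nonempty, every substitution $\sigma \colon \Fm{\Sigma}{Y} \to \Fm{\Sigma}{X}$ extends to some $\tilde\sigma \colon \Fm{\Sigma}{Z} \to \Fm{\Sigma}{X}$, and every substitution $\Fm{\Sigma}{Z} \to \Fm{\Sigma}{X}$ restricts to one $\Fm{\Sigma}{Y} \to \Fm{\Sigma}{X}$ that agrees with the original on $\Fm{\Sigma}{Y}$. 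The universal conditions defining $\vdash^{+,Z}$ and $\vdash^{+,Y}$ therefore return the same answer on such $(\Gamma,\varphi)$, and this equality is preserved by passing to $\kappa$-ary parts.

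Next I would identify $l^{+,Z}_\kappa$-filters on $\Fm{\Sigma}{Y}$ with $l^{+,Y}_\kappa$-theories. For the ``filter implies theory'' direction: given $\Gamma' \cup \{\varphi'\} \subseteq \Fm{\Sigma}{Y}$ with $\Gamma' \vdash^+_\kappa \varphi'$ in the $Y$-sense and $\Gamma' \subseteq F$, the preliminary step yields $\Gamma' \vdash^+_\kappa \varphi'$ in the $Z$-sense, and the filter condition applied to a retraction $v \colon \Fm{\Sigma}{Z} \to \Fm{\Sigma}{Y}$ fixing $Y$ pointwise (well-defined since $Y$ is nonempty) gives $\varphi' = v(\varphi') \in F$. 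For the opposite direction: given $\Gamma \vdash^+_\kappa \varphi$ in $\Fm{\Sigma}{Z}$ and $v \colon \Fm{\Sigma}{Z} \to \Fm{\Sigma}{Y}$ with $v(\Gamma) \subseteq F$, compose $v$ with the inclusion $\Fm{\Sigma}{Y} \hookrightarrow \Fm{\Sigma}{Z}$ to obtain a $\Sigma$-endomorphism $\bar v$ of $\Fm{\Sigma}{Z}$; structurality of $\vdash^+_\kappa$ in the $Z$-sense produces $v(\Gamma) = \bar v(\Gamma) \vdash^+_\kappa \bar v(\varphi) = v(\varphi)$, which by the preliminary step is a derivation in the $Y$-sense, whence the theory property of $F$ delivers $v(\varphi) \in F$.

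Once the two collections of theories agree, so do the corresponding closure operators, and the two logics are equal. The main obstacle, as I see it, is purely the bookkeeping of substitutions between the three formula algebras $\Fm{\Sigma}{X}, \Fm{\Sigma}{Y}, \Fm{\Sigma}{Z}$ and the translation between ``filter condition on $\Fm{\Sigma}{Y}$ with respect to the $Z$-presentation'' and ``theory condition in the $Y$-presentation''; no subtle cardinality issue intervenes beyond the routine use of $\kappa$-ary parts on both sides.
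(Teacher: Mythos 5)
Your proof is correct, but it takes a genuinely different route from the paper's. The paper argues at the level of consequence relations: it first notes that $\Log{X}(\FilPa(l^{+,Z}_\kappa))=l$, invokes Corollary \ref{CorollaryKappaFilterPairPresentingLogicOfCardinalityKappaAlsoYieldsNaturalExtensions} to see that $\Log{Y}(\FilPa(l^{+,Z}_\kappa))$ is a natural extension of $l$, and uses P{\v r}enosil's \emph{maximality} of $\vdash^{+,Y}_\kappa$ for one inclusion; for the converse it uses Theorem \ref{TheoremFilterPairsGiveConservativeExtensions} to reduce $\Gamma \vdash_{\Log{Y}(\FilPa(l^{+,Z}_\kappa))} \varphi$ to $\Gamma \vdash^{+,Z}_\kappa \varphi$ and then restricts substitutions $\Fm{\Sigma}{Z}\to\Fm{\Sigma}{X}$ to $\Fm{\Sigma}{Y}$. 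You instead work at the level of theories: your preliminary step, that $\vdash^{+,Z}_\kappa$ and $\vdash^{+,Y}_\kappa$ agree on pairs from $\Fm{\Sigma}{Y}$ (via extension/restriction of substitutions, compatibly with $\kappa$-ary parts), is a clean statement the paper only obtains implicitly, and your retraction/inclusion arguments identifying $l^{+,Z}_\kappa$-filters on $\Fm{\Sigma}{Y}$ with $l^{+,Y}_\kappa$-theories are sound (structurality of $\vdash^{+,Z}_\kappa$ under the endomorphism $\bar v$ is available, and the identification of the theories of $\Log{Y}(\FilPa(l^{+,Z}_\kappa))$ with the $l^{+,Z}_\kappa$-filters on $\Fm{\Sigma}{Y}$ really comes from the construction in Proposition \ref{PropLogicsFromFilterPairs} and the proof of Theorem \ref{TheoremEveryLogicComesFromAFilterPair}, a slight citation adjustment). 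What each approach buys: the paper's proof is shorter because it leans on previously established machinery (conservativity of $\Log{Z}$ over $\Log{Y}$ and P{\v r}enosil's maximality), while yours is more self-contained, using only the definition of $\vdash^{+}$, its structurality, and the fact that $\vdash^{+,Y}_\kappa$ is a logic (so that it is determined by its theories) -- it does not need maximality at all. In fact, your preliminary step combined with the paper's Theorem \ref{TheoremFilterPairsGiveConservativeExtensions} would yield an even quicker proof, since both $\vdash_{\Log{Y}(\FilPa(l^{+,Z}_\kappa))}$ and $\vdash^{+,Y}_\kappa$ are then exhibited as the restriction of $\vdash^{+,Z}_\kappa$ to $\Fm{\Sigma}{Y}$.
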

\begin{prooof}
 We know that $l^{+,Z}$ is a conservative extension of $l$, so the $l^{+,Z}$-filters on $\Fm{\Sigma}{X}$ are exactly the $l$-theories, i.e. $\Log{X}(\FilPa(l^{+,Z}_\kappa)) = l$. Thus by Corollary \ref{CorollaryKappaFilterPairPresentingLogicOfCardinalityKappaAlsoYieldsNaturalExtensions} $\Log{Y}(\FilPa(l^{+,Z}_\kappa))$ is a natural extension of $l$ with set of variables $Y$. Since $l^{+,Y}_\kappa$ is the strongest such extension, we have $\Gamma \vdash_{\Log{Y}(\FilPa(l^{+,Z}_\kappa))} \varphi \Rightarrow \Gamma {\vdash^{+,Y}_\kappa} \varphi$. 
 
 For the opposite implication suppose $\Gamma \cup\{\varphi\} \subseteq \Fm{\Sigma}{Y}$ are such that  $\Gamma \vdash^{+,Y}_\kappa \varphi$. Then by definition of the maximal natural extension, there is a $\Gamma' \subseteq \Gamma$ such that $|\Gamma'| \leq \kappa$ and for every substitution $\sigma \colon \Fm{\Sigma}{Y} \to \Fm{\Sigma}{X}$ we have $\sigma(\Gamma') \vdash_l \sigma(\varphi)$. 
 
We need to show that $\Gamma \vdash_{\Log{Y}(\FilPa(l^{+,Z}_\kappa))} \varphi$. Since by Theorem \ref{TheoremFilterPairsGiveConservativeExtensions} \linebreak $l^{+,Z}_\kappa = {\Log{Z}(\FilPa(l^{+,Z}_\kappa))}$ is a conservative extension of ${\Log{Y}(\FilPa(l^{+,Z}_\kappa))}$, this is equivalent to showing $\Gamma \vdash^{+,Z}_\kappa \varphi$, i.e. to showing that  there is a $\Gamma' \subseteq \Gamma$ such that $|\Gamma''| \leq \kappa$ and  for every substitution $\sigma \colon \Fm{\Sigma}{Z} \to \Fm{\Sigma}{X}$ we have $\sigma(\Gamma'') \vdash_l \sigma(\varphi)$. For this we can simply take $\Gamma'':=\Gamma'$ and observe that every such substitution can be restricted to a substitution $\sigma \colon \Fm{\Sigma}{Y} \to \Fm{\Sigma}{X}$, and then we know that $\sigma(\Gamma') \vdash_l \sigma(\varphi)$.
\end{prooof}

\begin{Prop}\label{PropValuesOfInitialMonoFilterPairOnFreeAlgebras}
Let $l:=(\Sigma, X, {\vdash})$ be a logic of cardinality $\kappa$.
{Suppose that $(H,j)$ is a terminal filter pair  in $\FPm{l}$}. Then $H$ is determined on the absolutely free algebras $\Fm{\Sigma}{Y}$ as follows: it takes the value $H(\Fm{\Sigma}{Y}) = \FilPa(l^+_\kappa)(\Fm{\Sigma}{Y})$, the set of filters of the maximal natural extension $l^+_\kappa$ to $\Fm{\Sigma}{Y}$.
\end{Prop}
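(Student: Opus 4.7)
The strategy is to compare $(H,j)$ with the specific filter pair $\FilPa(l^+_\k)$, using terminality in one direction and the maximality of $l^+_\k$ as a natural extension in the other. The result will fall out as a sandwich of the lattice $\mathrm{image}(j_{\Fm{\Sigma}{Y}})$ between two copies of the $l^+_\k$-theories on $\Fm{\Sigma}{Y}$.

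First I would check that $\FilPa(l^+_\k)$ is itself an object of $\FPm{l}$. It is evidently mono, since the transformation $\iota_{l^+_\k}$ is levelwise an honest inclusion of filters into power sets. That it presents $l$ on the variables $X$ is immediate from Lemma \ref{LemmaMaximalNatExtensionRestrictsToMaximalNatExtension} applied with the triple $X \subseteq X \subseteq Y$: this gives $\Log{X}(\FilPa(l^+_\k)) = l^{+,X}_\k = l$, as the maximal natural extension of $l$ to its own set of variables is $l$ itself. Now terminality of $(H,j)$ supplies a morphism $\FilPa(l^+_\k) \to (H,j)$, i.e. a natural transformation $t \colon H \to Fi_{l^+_\k}$ with $\iota_{l^+_\k} \circ t = j$. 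Evaluating at $A = \Fm{\Sigma}{Y}$ yields
\[\mathrm{image}(j_{\Fm{\Sigma}{Y}}) \;\subseteq\; \mathrm{image}(\iota_{l^+_\k,\Fm{\Sigma}{Y}}) \;=\; Fi_{l^+_\k}(\Fm{\Sigma}{Y}),\]
so every element of $H(\Fm{\Sigma}{Y})$, viewed through $j$, is an $l^+_\k$-theory.

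For the reverse inclusion, apply Corollary \ref{CorollaryKappaFilterPairPresentingLogicOfCardinalityKappaAlsoYieldsNaturalExtensions} to conclude that $\Log{Y}(H,j)$ is itself a natural extension of $l$ to $\Fm{\Sigma}{Y}$. Since $l^+_\k$ is the \emph{maximal} such natural extension (P{\v r}enosil \cite[Cor. 6]{Prenosil}), we have $\vdash_{\Log{Y}(H,j)} \subseteq \vdash_{l^+_\k}$, and a stronger consequence relation has fewer closed sets, so $\mathrm{Th}(l^+_\k) \subseteq \mathrm{Th}(\Log{Y}(H,j))$. Because $(H,j)$ is mono, $\mathrm{image}(j_{\Fm{\Sigma}{Y}})$ is precisely the lattice of $\Log{Y}(H,j)$-theories, and this gives the opposite containment $Fi_{l^+_\k}(\Fm{\Sigma}{Y}) \subseteq \mathrm{image}(j_{\Fm{\Sigma}{Y}})$. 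Combining, and identifying $H(\Fm{\Sigma}{Y})$ with its faithful image under $j$, produces the claimed equality $H(\Fm{\Sigma}{Y}) = \FilPa(l^+_\k)(\Fm{\Sigma}{Y})$.

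The only subtle point in the argument is to track the direction-reversal baked into $\FPm{l}$: a morphism $(G',i') \to (G,i)$ of filter pairs corresponds to a natural transformation $G \to G'$, so terminality of $(H,j)$ forces $\mathrm{image}(j_A)$ to be the \emph{smallest} among all mono presentations of $l$, i.e. the corresponding closure operator to be the \emph{largest}. Once this orientation is fixed, the maximal natural extension $l^+_\k$ is exactly what one expects to see at $\Fm{\Sigma}{Y}$, and the proof is a short two-sided comparison with the canonical filter pair of $l^+_\k$.
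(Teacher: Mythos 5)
Your proof is correct and takes essentially the same route as the paper's: a two-sided comparison in which terminality against the mono filter pair $\FilPa(l^{+,Y}_\kappa)$ (which lies in $\FPm{l}$ by Lemma \ref{LemmaMaximalNatExtensionRestrictsToMaximalNatExtension}) gives one inclusion, and Corollary \ref{CorollaryKappaFilterPairPresentingLogicOfCardinalityKappaAlsoYieldsNaturalExtensions} together with the maximality of $l^+_\kappa$ among natural extensions gives the other. The paper's own proof is just a terser version of this sandwich argument (hedged by ``if there exists a mono filter pair with these values''), so your write-up simply makes the same comparison explicit.
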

\begin{prooof}
 We know from Corollary \ref{CorollaryKappaFilterPairPresentingLogicOfCardinalityKappaAlsoYieldsNaturalExtensions} that $H(\Fm{\Sigma}{Y})$ is the set of filters of a natural extension. It is the strongest natural extension $l^+_\kappa$ that has the fewest filters, so if there exists a mono filter pair with the values $\FilPa(l^+_\kappa)(\Fm{\Sigma}{Y})$, for every set $Y$, then these are necessarily also the values of the initial one.
%To see that such a filter pair exists, consider the canonical filter pair $\FilPa({l^{+, \kappa}_\kappa})$ of the maximal natural extension of $l$ to a set of variables of cardinality $\kappa$ (just take the set $\kappa$ itself). Then for all sets of variables $Y$ of cardinality smaller than $\kappa$, we have the value  $\FilPa(l^+_\kappa)(\Fm{\Sigma}{Y})$ by Lemma \ref{LemmaMaximalNatExtensionRestrictsToMaximalNatExtension}. For all sets $Y$ of cardinality bigger than $\kappa$ we have the  value  $\FilPa(l^+_\kappa)(\Fm{\Sigma}{Y})$ by the uniqueness of natural extensions given by Cor. \ref{CorollaryUnderCardinalityConditionsNaturalExtensionsAreUnique}.
\end{prooof}
\medskip

While, as illustrated by Prop. \ref{PropValuesOfInitialMonoFilterPairOnFreeAlgebras}, the possible values on free algebras are sharply restricted once one knows the logic represented by a mono filter pair, it is harder to say something about non-free algebras. 

For obtaining a precise statement \emph{disregarding} the non-free algebras, we consider a variant of the notion of $\kappa$-filter pair: a \emph{free $\kappa$-filter pair} is a pair $(G,i)$ where $G \colon \SigmaStr_{\rm{free}}^{op} \to \Lk$ is a functor from the category of \emph{absolutely free} $\Sigma$-structures and all endomorphisms to the category of $\kappa$-presentable lattices, and $i$ is a natural transformation exactly as in the definition of $\kappa$-filter pair. Every $\kappa$-filter pair has an underlying free $\kappa$-filter pair, given by restricting the functor part from all $\Sigma$-structures to just absolutely free $\Sigma$-structures. Clearly the associated logics only depend on this restricted filter pair. Indeed, this restriction corresponds to adopting the point of view of a filter pair as a presentation of a family of logics, instead of a whole family of generalized matrices, see Example \ref{ExamplesOfGeneralCFilterPairs}. 

For a fixed logic $l$ of cardinality $\kappa$, we have the categories $\freeFP{l}$ and \linebreak $\freeFPm{l}$ and the restriction functors $\FP{l} \to \freeFP{l}$, resp. $\FPm{l} \to \freeFPm{l}$ which forget about the values at non-free algebras. The map \linebreak $\FPm{l} \to \freeFPm{l}$ is a quotient map, which identifies two mono filter pairs if their values agree for free algebras. Of course $ \freeFPm{l}$ is still a pre-ordered class.
%{\color{blue}With the same proof as that of Prop. \ref{PropMonoFilterPairsPresentingFixedLogicAreCompleteLattice} one obtains that {\color{red} {the poCLASS equivalent to  (Conjunto pelo Corol 3.13)}} $\freeFPm{l}$ is a complete lattice.} 

\vspace{0.2cm}

With our final result we give a description of the 
pre-ordered  class $\freeFPm{l}$:
%partially ordered classes{\footnote{Note that the former is only a pre-ordered class but the latter  satisfies the anti-symmetric law.}} $\freeFPm{l} \simeq \freeFPi{l}$.
%{\color{blue} Podemos tirar isto se nao fazemos uso do $\freeFPi{l}$  }{\color{red}{Acho legal deixar  um footnote de esclarecimento.}}

\begin{Teo}\label{TeoLatticeIsoBetweenFreeMonoFilterPairsAndNatExts}
Let $l:=(\Sigma, X, {\vdash})$ be a logic of cardinality $\kappa$ and  $Z$ be a set of cardinality $\kappa$.

Then the  pre-ordered class $\freeFPm{l}$ is equivalent to the poset
of natural extensions of $l$ to $\Fm{\Sigma}{Z}$, ordered by deductive strength, and both are equivalent to complete lattices.
\end{Teo}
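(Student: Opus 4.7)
My plan is to construct mutually quasi-inverse, order-preserving maps $\Phi$ and $\Psi$ between $\freeFPm{l}$ and the poset of natural extensions of $l$ to $\Fm{\Sigma}{Z}$ (ordered by deductive strength), and then to assemble a complete-lattice structure from results already established earlier in the paper.

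The forward map is $\Phi(G,i) := \Log{Z}(G,i)$, which is a natural extension of $l$ by Corollary \ref{CorollaryKappaFilterPairPresentingLogicOfCardinalityKappaAlsoYieldsNaturalExtensions}, since $(G,i)$ is a $\kappa$-filter pair and $\card(l)=\kappa$. The backward map sends a natural extension $l_Z$ to $\Psi(l_Z) := \FilPa(l_Z)$ restricted to free $\Sigma$-algebras; this sits in $\freeFPm{l}$ because its natural-transformation components are literal subset inclusions (hence mono), and because, by Theorem \ref{TheoremFilterPairsGiveConservativeExtensions} together with conservativity of $l_Z$ over $l$, one has $\Log{X}(\Psi(l_Z))=l$.

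The identity $\Phi\circ\Psi = \text{id}$ follows from Theorem \ref{TheoremEveryLogicComesFromAFilterPair}, which gives $\Log{Z}(\FilPa(l_Z))=l_Z$. For the other composition, given $(G,i)\in\freeFPm{l}$ with $l_Z := \Log{Z}(G,i)$, equivalence of $(G,i)$ and $\Psi(l_Z)$ in $\freeFPm{l}$ reduces to showing the equality of sublattices of $\wp(\Fm{\Sigma}{Y})$
\[
i_{\Fm{\Sigma}{Y}}(G(\Fm{\Sigma}{Y})) \;=\; \Fi{l_Z}{\Fm{\Sigma}{Y}}
\]
for every set $Y$. Since $i$ is objectwise injective, the left-hand side coincides with the lattice of fixed points of the closure operator $i_{\Fm{\Sigma}{Y}}\circ j_{\Fm{\Sigma}{Y}}$, i.e.\ the theory lattice of $\Log{Y}(G,i)$. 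For $Y\supseteq Z$, Corollary \ref{CorollaryKappaFilterPairPresentingLogicOfCardinalityKappaAlsoYieldsNaturalExtensions} and Theorem \ref{TheoremFilterPairsGiveConservativeExtensions} identify $\Log{Y}(G,i)$ as a natural extension of $l_Z$, and since $\card(l_Z)=\kappa=|Z|$, Proposition \ref{PropUnderCardinalityConditionsFiltersAreTheSameForNaturalExtensions} forces the $l_Z$-filters and the $\Log{Y}(G,i)$-filters on $\Fm{\Sigma}{Y}$ to coincide. For arbitrary $Y$ one reduces to this case via the contravariant functoriality of $G$ along the pair of $\Sigma$-homomorphisms $\Fm{\Sigma}{Y}\hookrightarrow \Fm{\Sigma}{Y\cup Z}$ and a retraction $\Fm{\Sigma}{Y\cup Z}\twoheadrightarrow \Fm{\Sigma}{Y}$ fixing $Y$.

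Order preservation of $\Phi$ and $\Psi$ is immediate from the Galois anti-isomorphism between sublattices of power sets and closure operators recalled in Section \ref{SectionPreliminaries}, together with the opposite-direction convention in the definition of morphisms of filter pairs. For the complete-lattice claim, Proposition \ref{PropMonoFilterPairsPresentingFixedLogicAreCompleteLattice} supplies nonempty set-indexed suprema in $\freeFPm{l}$, Proposition \ref{PropTerminalObjectInFilterPairsPresentingAFixedLogic} provides a bottom element $\FilPa(l)$, and the equivalence just built confirms that $\freeFPm{l}$ is equivalent to a set (since natural extensions of $l$ to $\Fm{\Sigma}{Z}$ form a subset of the set of consequence relations on $\Fm{\Sigma}{Z}$); together these ingredients yield a complete lattice, which transports along the equivalence. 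I expect the main obstacle to be the equality $i_{\Fm{\Sigma}{Y}}(G(\Fm{\Sigma}{Y}))=\Fi{l_Z}{\Fm{\Sigma}{Y}}$, and in particular marshalling the mono hypothesis and the cardinality condition needed to apply Proposition \ref{PropUnderCardinalityConditionsFiltersAreTheSameForNaturalExtensions}, which is the step that bridges the a priori gap between the natural extensions $\Log{Y}(G,i)$ and $l_Z$.
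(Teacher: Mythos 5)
Your proposal is correct and follows the same overall skeleton as the paper's proof: the forward map is $\Log{Z}$, surjectivity comes from $\FilPa(-)$ together with Theorems \ref{TheoremEveryLogicComesFromAFilterPair} and \ref{TheoremFilterPairsGiveConservativeExtensions}, and the complete-lattice claim is assembled from Propositions \ref{PropMonoFilterPairsPresentingFixedLogicAreCompleteLattice} and \ref{PropTerminalObjectInFilterPairsPresentingAFixedLogic} plus the fact that natural extensions form a set. Where you diverge is in the faithfulness half: the paper argues that $\Log{Z}(G,i)$ determines the values of $(G,i)$ on free algebras by a case split -- for smaller $Y$ via restriction (Theorem \ref{TheoremFilterPairsGiveConservativeExtensions}), for larger $Y$ via uniqueness of natural extensions (Corollary \ref{CorollaryUnderCardinalityConditionsNaturalExtensionsAreUnique}) -- whereas you prove the sharper on-the-nose identity $i_{\Fm{\Sigma}{Y}}(G(\Fm{\Sigma}{Y}))=\Fi{l_Z}{\Fm{\Sigma}{Y}}$ for every $Y$, using Proposition \ref{PropUnderCardinalityConditionsFiltersAreTheSameForNaturalExtensions} directly when $Y \supseteq Z$ and reducing general $Y$ to $Y\cup Z$ via naturality along the inclusion/retraction pair (this reduction does go through: $\tau^{-1}(F)$ is again an $l_Z$-filter by Lemma \ref{PreimagesOfFiltersAreFilters}, hence lies in the image of $i_{\Fm{\Sigma}{Y\cup Z}}$, and $F=\sigma^{-1}\tau^{-1}(F)$ lands in the image of $i_{\Fm{\Sigma}{Y}}$ by the naturality square). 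Since Corollary \ref{CorollaryUnderCardinalityConditionsNaturalExtensionsAreUnique} is itself deduced from Proposition \ref{PropUnderCardinalityConditionsFiltersAreTheSameForNaturalExtensions}, the ingredients coincide, but your route buys an explicit quasi-inverse $\FilPa(-)$ restricted to free algebras, and with it order-reflection (fullness) of the equivalence, which the paper leaves implicit after proving only bijectivity on isomorphism classes together with order-preservation in one direction. One point to make explicit in a final write-up: Propositions \ref{PropMonoFilterPairsPresentingFixedLogicAreCompleteLattice} and \ref{PropTerminalObjectInFilterPairsPresentingAFixedLogic} are stated for $\FPm{l}$ rather than $\freeFPm{l}$, so you should either note that the pointwise supremum construction and the initial object restrict verbatim to the free setting (as the paper tacitly does), or transport the lattice structure directly to the set of natural extensions through the equivalence just established.
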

\begin{prooof}
Denote the poset of natural extensions of $l$ to $\Fm{\Sigma}{Z}$, ordered by deductive strength, by $\textrm{NatExt}_Z(l)$.

The claimed equivalence is given by the map  $\Log{Z} \colon \freeFPm{l} \to \textrm{NatExt}_Z(l)^{}$ that sends a free mono filter pair presenting $l$ to the associated logic with set of variables $Z$.

It is clear that the map is order preserving, since having more filters means presenting a weaker logic (and the inclusions of lattices become morphisms in the opposite direction in $\freeFPm{l}$). 

{\em The map }$\Log{Z} \colon \freeFPm{l} \to \textrm{NatExt}_Z(l)^{}${\em is surjective}:\\
Let $l'$ be a natural extension of $l$ to $\Fm{\Sigma}{Z}$. The filter pair $\FilPa(l')$ is a mono filter pair. Since $l'$ is a conservative extension of $l$ by assumption and $l'=\Log{Z}(\FilPa(l'))$ (Thm. \ref{TheoremEveryLogicComesFromAFilterPair}) is also a conservative extension of $\Log{X}(\FilPa(l'))$ by Theorem \ref{TheoremFilterPairsGiveConservativeExtensions}, we have $\Log{X}(\FilPa(l'))=l$. So $\FilPa(l') \in \FPm{l}$, and thus for its restriction to free algebras we have $\FilPa(l') \in \freeFPm{l}$. By Theorem \ref{TheoremEveryLogicComesFromAFilterPair} $\Log{Z}(\FilPa(l'))=l'$, which shows surjectivity.

{\em The map }$\overline{\Log{Z}} \colon \freeFPm{l}/\!\cong \ \to \textrm{NatExt}_Z(l)^{}${\em is injective}:\\ 
We show that for a filter pair $(G,i)$ presenting the logic $l$, the value $\Log{Z}(G,i)$ completely determines the values of the filter pair on free algebras $\Fm{\Sigma}{Y}$. Indeed, for a set $Y$ of lower cardinality than $Z$ the consequence relation of $\Log{Y}(G,i)$ is simply the restriction from $\Fm{\Sigma}{Z}$ to $\Fm{\Sigma}{Y}$ by Theorem \ref{TheoremFilterPairsGiveConservativeExtensions}, so the filters are determined up to isomorphism by those on $\Fm{\Sigma}{Z}$. On the other hand, for a set $Y$ of bigger cardinality than $Z$, the logic $\Log{Y}(G,i)$ will be a natural extension of $\Log{Z}(G,i)$ by Corollary \ref{CorollaryKappaFilterPairPresentingLogicOfCardinalityKappaAlsoYieldsNaturalExtensions}, but the latter has a \emph{unique} natural extension by Cor. \ref{CorollaryUnderCardinalityConditionsNaturalExtensionsAreUnique}, so this is also completely determined by $\Log{Z}(G,i)$.

We thus have an isomorphism of partially ordered classes  $\overline{\Log{Z}} : \freeFPm{l}/\!\cong \ \overset{\cong}\to \textrm{NatExt}_Z(l)^{}$. But since there is only a \emph{set} of natural extensions, by Remark \ref{ObsSeForConjuntoEntaoEReticuladoCompleto} both are complete lattices.
%{\color{blue} We thus have an isomorphism of partially ordered classes. Since $\textrm{NatExt}_Z(l)^{}$ is actually a set, so is $\freeFPm{l}$.
%And since we know that there is a minimal natural extension, the poset $\textrm{NatExt}_Z(l)^{}$ has an minimal element, and thus $\freeFPm{l}$ has a maximal element, i.e. an infimum of the emtpy set.
%On the other hand, with the same proof as  that 
% of Prop. \ref{PropMonoFilterPairsPresentingFixedLogicAreCompleteLattice}, it follows that $\freeFPm{l}$ has infima of non-empty sets
%Altogether, we get that $\freeFPm{l}$ has arbitrary infima, and being equivalent to a set is there fore a complete lattice.
%}
\end{prooof}
\medskip

\begin{Cor} \label{natextlattice} 
The set of natural extensions of a logic $l$ of regular cardinality $\kappa$ with respect to a fixed set of variables $Z$ with cardinality $\kappa$, ordered by deductive strength, is a complete lattice.
\end{Cor}

We conclude by remarking that the results of this article suggest to view a $\kappa$-filter pair as a presentation of a logic \emph{together with a chosen family of natural extensions}. In fact, the notion of free mono filter pair captures precisely that.

The view of finitary filter pairs as presentations of a logic, suggested in \cite{AMP}, remains as valid as before: by Cintula and Noguera's uniqueness result, Cor. \ref{CorollaryUnderCardinalityConditionsNaturalExtensionsAreUnique}, for a finitary logic there is a unique natural extension to every set of variables, and hence the lattices of Theorem \ref{TeoLatticeIsoBetweenFreeMonoFilterPairsAndNatExts} are trivial. Thus this is a genuinely new aspect arising for $\kappa$-filter pairs.

\section{Final remarks}\label{SectionFinalRemarks}

We have introduced $\kappa$-filter pairs and raised the question, which different $\kappa$-filter pairs give rise to the same fixed logic. After restricting to mono filter pairs to make it into a tractable question, this is equivalent to the question into which different coherent systems of generalized matrices the given logic can fit.

This seems to be a difficult question, and an answer in full generality at the moment seems elusive. We have, however, completely solved the "free algebra part" of the problem, in terms of the natural extensions of the logic. 
We expect that a full answer would provide insights on the prospects for generalized matrices described in \cite{FontGmatrix} (see also \cite[Chapter 5]{Fon}), many of which have not been developed very far.

There is one respect which we didn't mention, in which our solution for the free algebra part already gives interesting information: As laid out in \cite[Section 4]{FontGmatrix}, generalized matrices can be understood as models of Gentzen systems, and we can understand our result as saying that the different Gentzen systems describing a given logic correspond precisely to the natural extensions of the logic. We thank the editor for her or his remark pointing us into this direction.

The main interest in $\kappa$-filter pairs is that they can be used to treat infinitary logics along the lines of \cite{AMP}. Most results of loc. cit. carry over, and the prospects listed for finitary filter pairs in the final section of loc. cit continue to be make sense and be interesting. The extra flexibility of allowing logics of higher cardinalities can be used to speak about logics which have an algebraic semantics in generalized quasivarieties, via the congruence filter pairs of Section \ref{SectionFilterPairs}.

The present article laid technical groundwork for this, and we intend to follow up with concrete applications (some of which were hinted at in Section \ref{SectionFilterPairs}) and further steps in the long-term project laid out in \cite{MaPi1}, \cite{MaPi2}, of establishing local-global principles in logic, setting up a representation theory of logics and giving applications in remote algebraization.


\begin{thebibliography}{}

%\bibitem [{\bf AJMP}] {AJMP}  P. Arndt, R. Jansana, H. L. Mariano, D. Pinto, {\em Finitary filter pairs and their associated logics}, preprint.


\bibitem{AMP}  P. Arndt, H. L. Mariano, D. C. Pinto, {\em Finitary Filter Pairs and Propositional Logics},
South American Journal of Logic {\bf 4(2)}, 257-280, 2018.





\bibitem{AMP2}  P. Arndt, H. L. Mariano, D. C. Pinto, {\em Congruence filter pairs, adjoints and Leibniz hierarchy}, arXiv:2109.01065, 2021.


\bibitem{AMP3}  P. Arndt, H. L. Mariano, D. C. Pinto, {\em Congruence filter pairs, equational filter pairs and generalizations}, in preparation, 2021.

\bibitem{AMP4}  P. Arndt, H. L. Mariano, D. C. Pinto, {\em
Horn filter pairs and Craig interpolation in propositional logic}, in preparation, 2021.

%\bibitem [{AMP4}] {AMP4}  P. Arndt, H. L. Mariano, D. C. Pinto, {\em Generalized Filter Pairs, Abstract Logics and Applications}, in preparation, 2020.

%\bibitem [{AMP5}] {AMP5}  P. Arndt, H. L. Mariano, D. C. Pinto, {\em Horn Filter Pairs}, in preparation, 2020.



\bibitem{AR}  J. Ad\'amek, J. Rosick\'y, {\bf Locally Presentable and
Accessible Categories}, Lecture Notes Series of the LMS {\bf 189},
Cambridge University Press, Cambridge, Great Britain, 1994.

\bibitem{AvronZamansky} Avron A., Zamansky A. (2011) {\em Non-Deterministic Semantics for Logical Systems.} In: Gabbay D., Guenthner F. (eds) Handbook of Philosophical Logic, vol 16. Springer, Dordrecht.

\bibitem{BloomBrownSuszko} S. Bloom, D. Brown,  R. Suszko, {\em Some theorems on abstract logics}, Algebra i Logika
{\bf 9}, 274-280, 1970.

\bibitem{BP}  W. J. Blok, D. Pigozzi, {\bf Algebraizable logics},  Memoirs of the AMS {\bf 396}, American Mathematical Society, Providence, USA, 1989.

%\bibitem [{\bf CG}] {CG} C. Caleiro, R. Gon\c calves, {\em  Equipollent logical systems}, {\bf Logica Universalis: Towards a General Theory of Logic} (Editor J.-Y. Beziau) (2007), 97-110.

\bibitem{CintulaNoguera} P. Cintula, C. Noguera, {\em A note on natural extensions in abstract algebraic logic}, Studia Logica, {\bf 103(4)},  815-823, 2015.

\bibitem{CzelakowskiProtoalgebraicLogics} J. Czelakowski, {\bf Protoalgebraic logics}, Trends in Logic---Studia Logica Library {\bf 10}, Kluwer Academic Publishers,  xii+452 pp., 2001.

\bibitem{Ellerman} D. Ellerman, {\em Logical information theory: new logical foundations for information theory}, Logic Journal of the IGPL, Volume 25, Issue 5, October 2017, Pages 806–835.

\bibitem{Fon} {J. M. Font},  {\bf Abstract Algebraic logic: An introductory textbook}, Mathematical Logic and Foundations, College Publications, London, 2016.



\bibitem{FontGmatrix} J. M.  Font, {\em Generalized Matrices in Abstract Algebraic Logic.} In: Hendricks V.F., Malinowski J. (eds) Trends in Logic. Trends in Logic (Studia Logica Library), vol {\bf 21}. Springer, Dordrecht.

\bibitem{LosSuszko} J. {\L}o{\'s}, R. Suszko,  {\em Remarks on sentential logics}, Indagationes Mathematicae {\bf 20}, 177-183, 1958.

%\bibitem [{\bf LS}] {LS}  J. \L o\'s, R. Suszko, {\em Remarks on sentential logics},
%Proceedings Koninkliske Nederlandse Akademie  van Wetenschappen,
%Series A, {\bf 61} (1958), 177-183.

%\bibitem [{\bf MaMe}] {MaMe} H. L. Mariano, C. A. Mendes, {\em Towards a good notion of categories of logics}, arXiv preprint, http://arxiv.org/abs/1404.3780, 2014.


%\bibitem [{\bf MaMe2}] {MaMe2} [{\bf MaMe2}] H. L. Mariano, C. A. Mendes, {\em Analysing categories of signatures}, in preparation.


%\bibitem [{\bf MaMe3}] {MaMe3}   [{\bf MaMe3}]  H. L. Mariano, C. A. Mendes,  {\em Morita equivalence and categories of fractions in Logic}, em prepara\cao.

\bibitem{MaPi1} H. L. Mariano, D. C. Pinto, {\em Representation theory of logics: a categorial approach}, arXiv preprint, http://arxiv.org/abs/1405.2429, 2014.

\bibitem{MaPi2} H. L. Mariano, D. C. Pinto, {\em  Algebraizable Logics and a functorial encoding of its morphisms}, Logic Journal of the IGPL {\bf 25(4)}, 524-561, 2017.

% \bibitem{MP}  M. Makkai, R. Par\'e, {\bf Accessible categories: The Foundations
% of Categorical Model Theory}, Contemporary Mathematics {\bf 104},
% American Mathematical Society, Providence, USA, 1989.


% \bibitem [{\bf Pa}] {Pa} J. Patr\'icia, {\em Sistemas dedutivos n$\tilde{a}$o algebiz$\acute{a}$veis},  http://www2.mat.ua.pt/martins/documentos/didaticos\\
%     /SistDednAlg.pdf, 2012.

\bibitem{Prenosil} A. P\v renosil, {\em Constructing natural extensions of propositional logics}, Studia Logica {\bf 104(6)}, 1179-1190, 2016.   
    
%\bibitem [{\bf Szi}] {Szi} J. Szigeti,  {\em On limits and colimits in the Kleisli category}, {
%Cahiers de topologie et g\'eom\'etrie diff\'erentielle cat\'egorique} {\bf 24(4)} (1983), 381-391.


\bibitem{ShoesmithSmiley} D. J. Shoesmith, T. J. Smiley, {\em Deducibility and many-valuedness}, Journal
of Symbolic Logic {\bf 36(4)}, 610-622, 1971.

\bibitem{TaylorPracticalFoundations}
P. Taylor, {\bf Practical foundations of mathematics}, Cambridge Studies in Advanced Mathematics, Cambridge University Press, 1999.

%
% and use \bibitem to create references. Consult the Instructions
% for authors for reference list style.
%
%\bibitem{RefJ}
% Format for Journal Reference
%Author, Article title, Journal, Volume, page numbers (year)
% Format for books
%\bibitem{RefB}
%Author, Book title, page numbers. Publisher, place (year)
% etc
\end{thebibliography}
\end{document}